\newcommand{\ph}{\phantomsection}
\newcommand{\R}{\mathbb  R}
\newcommand{\C}{\mathbb  C}
\newcommand{\N}{\mathbb  N}
\newcommand{\eps}{\varepsilon}
\renewcommand{\epsilon}{\varepsilon}
\newcommand{\e}{  \text{e}   }
\newcommand{\Z}{  \mathbb{Z}   }
\newcommand{\F}{  \mathcal{F}   }
\renewcommand{\H}{  \mathcal{H}   }
\newcommand{\T}{  \mathcal{T} }
\newcommand{\dis}{\displaystyle}
\newcommand{\ov}{  \overline  }
\renewcommand{\phi}{  \varphi  }
\newcommand{\wh}{  \widehat   }
\newcommand{\<}{  \langle   }
\renewcommand{\>}{  \rangle   }
\renewcommand{\S}{  \mathbb{S}  }
\numberwithin{equation}{section}
\theoremstyle{plain}
 \newtheorem{theo}{Theorem}[section]
\newtheorem{lemm}[theo]{Lemma}
\newtheorem{prop}[theo]{Proposition}
\newtheorem{coro}[theo]{Corollary}
\newtheorem{rema}[theo]{Remark}
\def\beq{\begin{equation}}   \def\eeq{\end{equation}}
\def\bea{\begin{eqnarray}}  \def\eea{\end{eqnarray}}
\renewcommand{\theequation}{\thesection.\arabic{equation}}
\newcounter{hran} \renewcommand{\thehran}{\thesection.\arabic{hran}}
\def\bmini{\setcounter{hran}{\value{equation}}
    \refstepcounter{hran}\setcounter{equation}{0}
    \renewcommand{\theequation}{\thehran\alph{equation}}\begin{eqnarray}}
\def\bminiG#1{\setcounter{hran}{\value{equation}}
\refstepcounter{hran}\setcounter{equation}{-1}
\renewcommand{\theequation}{\thehran\alph{equation}}
\refstepcounter{equation}\label{#1}\begin{eqnarray}}
\author{Pierre Germain}
\address{Courant Institute of Mathematical Sciences, 251 Mercer Street, New York 10012-1185 NY, USA}
\email{pgermain@cims.nyu.edu}
\author{Zaher Hani}
\address{School of Mathematics, Georgia Institute of Technology, Atlanta, GA 30332, USA}
\email{hani@math.gatech.edu}
\author{ Laurent Thomann }
\address{Laboratoire de Math\'ematiques J. Leray, UMR  6629 du CNRS, Universit\'e de Nantes, 
2, rue de la Houssini\`ere,
44322 Nantes Cedex 03, France}
\email{laurent.thomann@univ-nantes.fr}
\title[ ]{ On the continuous resonant equation for NLS\\
I. Deterministic analysis}
\begin{document}

\begin{abstract}
We study the continuous resonant (CR) equation which was derived in \cite{FGH} as the large-box limit of the cubic nonlinear Schr\"odinger equation in the small nonlinearity (or small data) regime. We first show that the system arises in another natural way, as it also corresponds to the resonant cubic Hermite-Schr\"odinger equation (NLS with harmonic trapping). We then establish that the basis of special Hermite functions is well suited to its analysis, and uncover more of the striking structure of the equation. We study in particular the dynamics on a few invariant subspaces: eigenspaces of the harmonic oscillator, of the rotation operator, and the Bargmann-Fock space. We focus on stationary waves and their stability.
\end{abstract}

\subjclass[2000]{35BXX ;  37K05  ; 35Q55 ; 35C07}
\keywords{Nonlinear Schr\"odinger equation, resonant equation, harmonic oscillator, Lowest Landau Level, stationary solutions}
\thanks{P. G. is partially supported by NSF grant DMS-1101269, a start-up grant from the Courant Institute, and a Sloan fellowship.}
\thanks{L.T. is partially supported   by the  grant  ``ANA\'E'' ANR-13-BS01-0010-03}
\thanks{Z.~H. is partially supported by NSF Grant DMS-1301647, and a start-up fund
from the Georgia Institute of Technology.}
\maketitle

\section{Introduction}

\subsection{Presentation of the equation}\label{intro}
The purpose of this manuscript is to study the so-called {\it continuous resonant equation} which was introduced by Faou-Germain-Hani \cite{FGH} as the large-box limit of the cubic nonlinear Schr\"odinger equation in the small nonlinearity regime. This equation reads
\begin{equation}\label{CR}\tag{CR}
\left\{
\begin{aligned}
&i\partial_{t}u=\T(u,u,u), \quad   (t,x)\in \R\times \R^2,\\
&u(0,x)=  f(x),
\end{aligned}
\right.
\end{equation}
where  the nonlinearity is defined by 
\begin{align}\label{lim}
\T(f_1,f_2,f_3)(z) & \overset{def}{=}  \int_\mathbb{R} \int_{\R^2}  f_1(x+z)f_2(\lambda x^{\perp}+z)\ov{f_3(x+\lambda x^{\perp}+z)} \,dx\, d\lambda \\
& =  \int_\mathbb{R} \int_{\R^2}  f_1(x^{\perp}+z)f_2(\lambda x+z)\ov{f_3(x^{\perp}+\lambda x+z)} \,dx\, d\lambda,\nonumber
\end{align}
for any $z\in \R^2$ (if $x=(x_1,x_2)$ we set $x^\perp=(-x_2,x_1)$).  While the above formula seems mysterious at this stage, it can be thought of as an integration over all rectangles for which $z$ is a vertex. Indeed, the points $z$, $x + z$, $\lambda x^\perp +z$, $x + \lambda x^\perp + z$ form a rectangle in $\mathbb{R}^2$, and this yields a parameterization of all rectangles which count $z$ as a vertex\footnote{This is related to the well-known fact that four frequencies $\xi_1$, $\xi_2$, $\xi_3$, $\xi_4$ are resonant for $NLS$ (say, on the 2-torus), if $\xi_1 + \xi_2 = \xi_3 + \xi_4$ and $|\xi_1|^2 + |\xi_2|^2 = |\xi_3|^2 + |\xi_4|^2$, which is equivalent to these frequencies forming a rectangle.}. \medskip

This expression can also be reformulated using the unitary group $e^{it\Delta}$, as was observed in \cite{FGH}. Note that the definition of $\T$ above is slightly different but equivalent to that in\;\cite{FGH} as we explain in Section\;\ref{known results}. Here we will show that  $\T$ can also be reformulated using the semigroup  $e^{it(-\Delta+|x|^2)}$ (see  Lemma\;\ref{lem11} below). The key ingredient is the so-called lens transform which is an explicit formula (given in \eqref{lenstransf}) which links   $e^{-it\Delta}$ to $e^{it(-\Delta+|x|^2)}$. This suggests that the harmonic oscillator ${H=-\Delta+|x|^2}$ will play a central role in the study of \eqref{CR}.
\medskip

Defining
\begin{align*}
\mathcal E(f_1,f_2,f_3,f_4) & \overset{def}{=} \langle \mathcal{T}(f_1,f_2,f_3)\,,\,f_4 \rangle_{L^2}\\
& = \int_\mathbb{R} \int_{\R^2} \int_{\R^2} f_1(x+z)f_2(\lambda x^{\perp}+z)\ov{f_3(x+\lambda x^{\perp}+z)} \, \ov{f_4( z) }\,dz\, dx\,d\lambda, \nonumber
\end{align*}
it is easy to check that the \eqref{CR} equation derives from the Hamiltonian 
$$
\mathcal{E}(f) \overset{def}{=} \mathcal{E}(f,f,f,f)
$$ 
given the symplectic form  $\omega(f,g) = -4\mathfrak{Im} \langle f\,,\, g \rangle_{L^2(\mathbb{R}^2)}$ on $L^2(\mathbb{R}^2)$ (this follows easily from the symmetries of $\mathcal{E}$). In other words, \eqref{CR} can also be written
$$
i\partial_t f = \frac{1}{2} \frac{\partial \mathcal{E}(f)}{\partial \bar f}.
$$
Important quantities conserved by the flow of the above equation (we shall come back to them) are the mass $M$ and angular momentum $P$:
$$
M \overset{def}{=} \int_{\mathbb{R}^2}|u|^2 \qquad \mbox{and} \qquad P  \overset{def}{=} \int_{\mathbb{R}^2} i (x \times \nabla) u\, \overline u,
$$
where $x \times \nabla = x_2 \partial_{x_1} - x_1 \partial_{x_2}$.

\subsection{Physical and mathematical relevance}
The \eqref{CR} equation has rich dynamics and can be studied in its own right, but it also plays a role in the description of the dynamics of the usual cubic NLS -- with or without potential -- in various situations, which we summarize here:\vspace{3pt}
\begin{itemize}
\item[$\bullet$] It was derived in \cite{FGH} as a weakly nonlinear, big box limit of the cubic NLS 
$$
i \partial_t u - \Delta u = |u|^2 u
$$ 
(here, we consider the focusing case, but the defocusing case leads to the same picture)
on a periodic box of size $L$; equivalently, it appears as the limiting equation for high frequency envelopes of solutions of NLS on the unit torus $\mathbb T^2$. To be more specific, setting the above equation on the 2-dimensional torus of size $L$, and prescribing data of size $\epsilon$, it is  well-approximated by (CR) on very long time scales (much longer than $L^2 / \epsilon^2$).\vspace{3pt}
\item[$\bullet$] We will prove in the present paper that~\eqref{CR} can also be derived as a small data approximation of the 2-dimensional cubic NLS equation with harmonic trapping, a.k.a. Hermite-Schr\"odinger equation
\begin{equation}
\label{nls}
i \partial_t u - \Delta u + |x|^2 u = |u|^2 u
\end{equation} 
This model is widely used in several areas of physics from nonlinear optics to Bose-Einstein condensates \cite{KFC}, but its relation to the dynamics of \eqref{CR} seems to be new. To be more specific: set $H = -\Delta + |x|^2$, $\Pi_n$ the projector on the $n$-th eigenspace of $H$, and $f = e^{-itH}u$. Keeping only the totally resonant part of the nonlinearity in \eqref{nls} gives the equation
$$
i \partial_t f =  \sum_{\substack {n_1,n_2,n_3,n_4\geq 0\\n_1+n_2=n_3+n_4}}  \Pi_{n_4}\Big( (\Pi_{n_1}f_1)   (\Pi_{n_2}f_2) (\ov{ \Pi_{n_3}f_3}) \Big).
$$
We will prove that the above right-hand side is, up to a multiplicative constant, equal to $\mathcal{T}(f,f,f)$. Thus the totally resonant part of NLS with harmonic trapping is identical to~\eqref{CR}. This implies that \eqref{CR} approximates the dynamics of \eqref{nls} for large times in the small data regime, as we illustrate in Theorem \ref{approximation theorem}.\vspace{3pt}
\item The equation \eqref{CR} appears as a modified scattering limit of the cubic NLS on $\R^3$ with harmonic trapping in two directions. Therefore any information on the asymptotic  dynamics of \eqref{CR} directly gives the corresponding behaviour for NLS. We refer to Hani-Thomann \cite{HT} for more details and concrete applications.\vspace{3pt}
\item  When restricted to the Bargmann-Fock space, which is given by $L^2(\R^2)$ functions which can be written as the product of a holomorphic function with the Gaussian $e^{-\frac{|z|^2}{2}}$,  the equation\;\eqref{CR} coincides  with the model known as Lowest Landau Level~\cite{ABN, AS, Nier}, used in the  description of  rotating Bose-Einstein condensates.
\end{itemize}

The  \eqref{CR} equation can be considered as a model of NLS-like equation without dispersion.  A lot of attention has recently been paid to such equations, at least starting with the work of Colliander, Keel, Staffilani, Takaoka, and Tao \cite{CKSTT} and subsequent works on the growth of Sobolev norms \cite{Hani,HPTV}. There, the dynamics of resonant systems like (CR) arise either as approximating or asymptotic dynamics for the original NLS model (see also \cite{HT}). Another important instance of zero-dispersion Hamiltonian equations comes from the work of G\'erard and Grellier \cite{GG1,GGX} who studied the so-called cubic Szeg\"o equation on $\S^1$ (this latter equation is obtained by replacing  $\Pi_n$ in $\T$ (see Lemma \ref{lemproj}) by the projection on $e^{inx}$ for $x\in \S^1$). We refer to Pocovnicu \cite{Poco1,Poco2} for the study of Szeg\"o on $\R$. 

Despite the absence of a linear part, and of the corresponding dispersive effects, \eqref{CR} is well-posed in $L^2(\R^2)$ (see~\cite{FGH}) which is remarkable for a zero-dispersion equation with a zero-order trilinear nonlinearity (in comparison, the Szeg\"o equation is well-posed in $H^s$ for $s\geq 1/2$ and the result is sharp). In fact, the nonlinearity has a hidden smoothing property coming from Strichartz estimates of the original NLS model, and this compensates the lack of linear dispersion (see Lemma \ref{lem11}). In this direction, let us also mention the dispersion managed Schr\"odinger equation, which is obtained by averaging  a nonlinear Schr\"odinger equation with varying dispersion.  This latter equation has a similar structure as \eqref{CR}. We refer to~\cite{Z,HL} and references therein for more details.

\medskip

In \cite{GHTstoc} we undertake the study of \eqref{CR} with random initial conditions. We exhibit global rough dynamics (for initial data less regular than $L^2$), and we construct Gibbs measures which are invariant by this flow. This is related to weak turbulence theory, which is often considered to occur in the statistical regime where the phases of the Fourier coefficients are initially uncorrelated.

\subsection{Known results}\label{known results}

Here we recall some important properties of the \eqref{CR} and the operator $\mathcal T$, which were proved in~\cite{FGH}. We first clarify the slightly different formulation of the operator $\mathcal T$ we use here and that derived in \cite{FGH}. In addition to reversing the roles of $f_2$ and $f_3$, the main difference between the two definitions is that the $\lambda$ integral in \cite{FGH} is over $[-1,1]$ instead of $\R$. Denoting by $\mathcal T_{\lambda \in [-1,1]}$ the same operator as $\mathcal T$ with the $\lambda$ integral taken over $[-1,1]$ instead, one can easily show by a change of variable that:
$$
\mathcal T_{\lambda \in [-1,1]}(f_1, f_1, f_3)=\frac{1}{2}\mathcal T(f_1, f_1, f_3).
$$
In particular, one can use either formulation to define the \eqref{CR} equation in which $f_1=f_2=f_3$.

\medskip

Now we recall that the operator $\mathcal{T}$ is bounded from $L^2 \times L^2 \times L^2$ to $L^2$, and also from $\dot L^{\infty,1} \times \dot L^{\infty,1} \times \dot L^{\infty,1}$ to $ \dot L^{\infty,1}$, where $\dot L^{\infty,1}$ is given by the norm $\|f\|_{\dot L^{\infty,1}} = \| |x| f \|_{L^\infty}$.

This implies immediately that  \eqref{CR} is locally well-posed for data in $L^2$ or $\dot{L}^{\infty,1}$. Using the conservation of the $L^2$ norm (the mass $M$), one obtains global well-posedness for data in $L^2$ (and afterwards in any Sobolev and weighted $L^2$ space).

Gaussians play the role of a ground state for the equation, since they minimize the Hamiltonian~$\mathcal{E}$ for fixed $M$. This variational characterization leads to orbital stability in $L^2$ (up to the symmetry group of the equation, which will be recalled in Section~\ref{sectionsymmetries}); it also holds in $L^{2,1} \cap H^1$ (where $L^{2,1}$ is given by the norm $\|f\|_{L^{2,1}} = \| \langle x \rangle f \|_{L^2}$) by a different argument.

\subsection{Obtained results}

\subsubsection{The basis of special Hermite functions}

Recall that there exists a Hilbertian basis of $L^2(\R^2)$ known as  the special Hermite functions $\{ \phi_{n,m} \}$, where $n \in \mathbb{N}, m \in \{-n,2-n,\dots,n-2,n\}$ which  diagonalizes jointly the harmonic oscillator $H = - \Delta + |x|^2$ and the angular momentum operator~${L = i x \times \nabla}$:
$$
H \phi_{n,m} = 2(n+1) \phi_{n,m}, \qquad L \phi_{n,m} = m \phi_{n,m}
$$
(see Section~\ref{sectionSHF} for a more detailed presentation). We show in Section~\ref{sectionSHF} that this basis is very well-suited to decomposing the trilinear operator $\mathcal{T}$: this follows from the formula (proved in Proposition~\ref{propFormule})
$$
\mathcal{T}(\phi_{n_1,m_1},\phi_{n_2,m_2},\phi_{n_3,m_3}) =  \mathcal{E}(\phi_{n_1,m_1},\phi_{n_2,m_2},\phi_{n_3,m_3},\phi_{n_4,m_4}) \phi_{n_4,m_4},
$$
where $n_4 = n_1 + n_2 - n_3$ and $m_4 = m_1 + m_2 - m_3$. This formula has the immediate consequence that all the special Hermite functions are stationary waves. Furthermore, it also implies the dynamical invariance of all the subspaces of the form
$$
\operatorname{Span}_{L^2} \left\{ \phi_{n,m}, \;\mbox{such that}\; \alpha n + \beta m = \gamma \;\operatorname{mod} \; \delta \right\} \quad \mbox{or} \quad \operatorname{Span}_{L^2} \left\{ \phi_{n,m},\; \mbox{such that}\; \alpha n + \beta m = \gamma \right\}
$$
where $\alpha$, $\beta$, $\gamma$ and $\delta$ are natural numbers. 

\subsubsection{Invariant subspaces} We investigate further the dynamics on particularly relevant or natural examples of the invariant subspaces which were just described:
\begin{itemize}
\item Sections~\ref{sect41}--\ref{sect44} are dedicated to the analysis of the dynamics on the eigenspaces of the harmonic oscillator $H$: for some $n_0 \in \mathbb{N}$, $\operatorname{Span}_{L^2} \left\{ \phi_{n_0,m}, \mbox{where}\; m \in \{-n_0,2-n_0,\dots,n_0-2,n_0\} \right\}$.
\item Section~\ref{sectioneigenL} focuses on the eigenspaces of the rotation operator $L$: for some $m_0 \in \mathbb{Z}$, \\$\operatorname{Span}_{L^2} \left\{ \phi_{n,m_0}, \mbox{where}\; n \in \{ |m_0|, |m_0|+2, |m_0| + 4 ,\dots\} \right\}$.
\item Finally, in Section~\ref{sectionbargmannfock}, we study the equation on the Bargmann-Fock space which can be seen as $\operatorname{Span}_{L^2} \left\{ \phi_{n,n}, \mbox{where} \; n \in \mathbb{N} \right\}$.
\end{itemize}

\subsubsection{Stationary waves} The most classical notion of a stationary wave is given by a solution of the type $e^{-i \omega t} \phi$, where $\phi$ is a fixed function, and $\omega \in \mathbb{R}$. More elaborate waves are of the type $R_{-\alpha \omega t} e^{-i\omega t} \phi$, where $R_\theta$ is the rotation operator in space around $0$ of angle $\theta$, and $\alpha$, $\omega$ real numbers. For reasons that will become clear, we call the former $M$-stationary waves (or simply stationary waves), and the latter $M+\alpha P$-stationary waves.

In the invariant subspaces detailed above, we give examples of stationary solutions, and try to investigate their stability. Understanding the full picture - even finding all stationary solutions - seems a daunting task, but we obtain first results in this direction.

Finally, in Section~\ref{sectionstationary}, we prove general theorems on decay and regularity of stationary waves of  \eqref{CR}. More precisely, we show that any $M$-stationary wave in $L^2$ is analytic and exponentially decreasing, while, roughly speaking, $M + \alpha P$ stationary waves belong to the Schwartz class as soon as they are a little more localized, and smooth, than $L^2$.

\subsection{Notations}

We set
\begin{itemize}
\item[$\bullet$] For $x \in \mathbb{R}^2$, $x^\perp$ is the rotation of $x$ by $\frac{\pi}{2}$ around the origin.
\item[$\bullet$] For $x \in \mathbb{R}^2$, $\langle x \rangle = \sqrt{1 + |x|^2}$; similarly if $x \in \mathbb{R}$.
\item[$\bullet$] $\langle f\,,\, g \rangle_{L^2(\mathbb{R}^2)} \overset{def}{=} \int_{\mathbb{R}^2} f \overline{g}$.
\item[$\bullet$] $\displaystyle \mathcal{F} f (\xi) = \widehat{f}(\xi) \overset{def}{=} \frac{1}{2\pi} \int_{\mathbb{R}^2} f(x) e^{-ix\xi}\,dx$.
\item[$\bullet$] $\N$ is the set of all non-negative integers (including 0).
\item[$\bullet$] $H \overset{def}{=} -\Delta + |x|^2$ is the quantum harmonic oscillator on $\R^2$.
\item[$\bullet$] $L \overset{def}{=} i(x_2 \partial_{x_1} - x_1 \partial_{x_2})$ is the angular momentum operator.
\item[$\bullet$] $H^s$ is the Sobolev space given by the norm $\|f\|_{H^s}  \overset{def}{=} \| \langle \xi \rangle^s \widehat f \|_{L^2}$.
\item[$\bullet$] $L^{2,s}$ is the weighted $L^2$-space given by the norm $\|f\|_{L^{2,s}} \overset{def}{=}  \| \langle x \rangle^s f \|_{L^2}$.
\item[$\bullet$] $\H^s=H^{s} \cap L^{2,s}$ is the weighted Sobolev space given by the norm $\|f\|_{\H^s}  \overset{def}{=}   \| \langle x \rangle^s f \|_{L^2}+\| \langle \xi \rangle^s \widehat f \|_{L^2}$. It is classical (see \eqref{eq}) that $\H^s=\big\{u\in L^{2},\;\;{\it s.t.}\;\; H^{s/2}u\in L^{2}\big\}$.
\item[$\bullet$] $R_\theta$ is the counter-clockwise rotation of angle $\theta$ around the origin.
\item[$\bullet$] $S_\lambda u = \lambda u (\lambda \cdot)$ is the $L^2$ scaling.
\item[$\bullet$] $\Pi_n$ is the orthogonal projection on the eigenspace $E_n=\big\{ u\in L^2(\R^2),\; Hu=2(n+1)u\,\big\} $.
\end{itemize}

In this paper $c,C>0$ denote universal constants the value of which may change from line to line. For two quantities $A$ and $B$, we denote $A \lesssim B$ if $A \leq CB$, and $A \approx B$ if $A \lesssim B$ and $A \gtrsim B$.

\section{\texorpdfstring{Properties and symmetries of  $\mathcal{T}$ and $\mathcal{E}$}{Properties and symmetries of  T and E}}

\subsection{\texorpdfstring{Various formulations of $\mathcal{T}$ and $\mathcal{E}$}{Various formulations of T and E}} 
\label{sectionsymmetries}

As observed in~\cite{FGH} we have

\begin{lemm}\label{FourierSym}The quantities $\mathcal{T}$ and $\mathcal{E}$ are invariant by Fourier transform:
\begin{equation}\label{fourier}
\F \big(  \T(f_1,f_2,f_3)\big)= \T(\wh{f_1},\wh{f_2},\wh{f_3}) \quad \mbox{and} \quad \mathcal{E}(f_1,f_2,f_3,f_4) = \mathcal{E}(\wh{f_1},\wh{f_2},\wh{f_3},\wh{f_4}). 
\end{equation}
\end{lemm}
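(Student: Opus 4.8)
The plan is to prove the first identity, $\F\big(\T(f_1,f_2,f_3)\big)=\T(\wh{f_1},\wh{f_2},\wh{f_3})$, by a direct computation, and to read off the second one as an immediate consequence. Indeed, since $\F$ is unitary on $L^2(\R^2)$ for the normalisation used here, Plancherel gives $\mathcal{E}(\wh{f_1},\wh{f_2},\wh{f_3},\wh{f_4})=\langle\T(\wh{f_1},\wh{f_2},\wh{f_3}),\wh{f_4}\rangle_{L^2}=\langle\F\T(f_1,f_2,f_3),\F f_4\rangle_{L^2}=\langle\T(f_1,f_2,f_3),f_4\rangle_{L^2}=\mathcal{E}(f_1,f_2,f_3,f_4)$; conversely, testing the second identity against an arbitrary $f_4$ returns the first, so the two are equivalent. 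Since $\T$ is bounded $L^2\times L^2\times L^2\to L^2$ (recalled above) and $\F$ is bounded on $L^2$, both sides of the first identity are continuous trilinear maps $(L^2)^3\to L^2$; it therefore suffices to establish it for $f_1,f_2,f_3\in\mathcal{S}(\R^2)$.

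For Schwartz data I would simply Fourier transform \eqref{lim} in the variable $z$ and insert the Fourier inversion formula for each of $f_1$, $f_2$ and $\ov{f_3}$. The virtue of the representation \eqref{lim} is that the resulting phase is \emph{linear} in both $z$ and $x$: the $z$-integral then produces $(2\pi)^2\,\delta(\eta_1+\eta_2-\eta_3-\xi)$ and, after collecting the $x$-dependence via the elementary identity $x^\perp\cdot v=-\,x\cdot v^\perp$, the $x$-integral produces $(2\pi)^2\,\delta\big((\eta_1-\eta_3)-\lambda(\eta_2-\eta_3)^\perp\big)$, where $\eta_1,\eta_2,\eta_3\in\R^2$ are the frequency variables dual to the three copies of $f$. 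The four factors $(2\pi)^{-1}$ from the Fourier conventions cancel the two factors $(2\pi)^2$, and one is left with an integral over $\lambda\in\R$ and $\eta_1,\eta_2,\eta_3\in\R^2$ of $\wh{f_1}(\eta_1)\wh{f_2}(\eta_2)\ov{\wh{f_3}(\eta_3)}$ against the product of these two $\delta$'s. To recognise this as $\T(\wh{f_1},\wh{f_2},\wh{f_3})(\xi)$ I would use the first $\delta$ to integrate out $\eta_2$ (forcing $\eta_2=\xi+\eta_3-\eta_1$), after which the second $\delta$ is a genuine two-dimensional $\delta$ in $\eta_3$ and integrating it out forces $\eta_3=\eta_1-\lambda(\xi-\eta_1)^\perp$; setting $x:=\eta_1-\xi$ this gives $\eta_2=\lambda x^\perp+\xi$ and $\eta_3=x+\lambda x^\perp+\xi$, and the surviving $(\lambda,x)$-integral is exactly the right-hand side of \eqref{lim} evaluated at $(\wh{f_1},\wh{f_2},\wh{f_3})$ and $z=\xi$. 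Conceptually, the two members being compared are both just the integral of $\wh{f_1}(\eta_1)\wh{f_2}(\eta_2)\ov{\wh{f_3}(\eta_3)}$ over the quadruples $(\eta_1,\eta_2,\eta_3,\xi)$ forming a rectangle with $\xi$ and $\eta_3$ opposite vertices --- i.e. over the Schr\"odinger-resonant set recalled in the footnote --- and being a rectangle is a condition left unchanged by passing from positions to frequencies.

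The only point genuinely needing care, and the one I would actually write out, is the justification of these interchanges of integration: once the inversion formulas are inserted the resulting multiple integral is not absolutely convergent, since the $z$- and $x$-integrals of a unimodular phase diverge and the $\lambda$-range is unbounded. I would handle this in the standard way: first truncate the $\lambda$-integral to $|\lambda|\leq R$ and insert Gaussian regulators $e^{-\eps(|x|^2+|z|^2)}$ into \eqref{lim} --- both legitimate by dominated convergence, the integrand of \eqref{lim} being, for each fixed $z$, integrable in $(\lambda,x)$ (one checks this by Cauchy--Schwarz in $\lambda$ along the lines $\R x^\perp$ together with the local integrability of $|x|^{-1}$ on $\R^2$) --- then carry out the now absolutely convergent computation in the order $z$, $x$, $\lambda$ (the $z$- and $x$-integrals yielding Gaussians, i.e. approximations of the two $\delta$'s above), and finally let $R\to\infty$ and $\eps\to0$, all remaining integrations being against Schwartz functions. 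Everything else is mechanical.
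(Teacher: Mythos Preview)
Your proof is correct and follows essentially the same approach as the paper: insert Fourier inversion into \eqref{lim}, collect the $z$- and $x$-phases to produce two delta functions, eliminate two frequency variables, and change variables to recover the defining integral for $\T$ applied to the transforms. The paper eliminates $\alpha$ and $\gamma$ and keeps an integral over $(\beta,\lambda)$ before the final substitution $\eta=\frac{1}{\lambda}(\xi-\beta)^\perp$, whereas you eliminate $\eta_2$ and $\eta_3$ and set $x=\eta_1-\xi$; these are cosmetically different routes through the same computation, and your added paragraph on regularising the oscillatory integrals supplies a justification the paper leaves implicit.
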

\begin{proof}
Using Fourier inversion and the identity $\frac{1}{4\pi^2} \int e^{ix \xi} \,dx = \delta_{\xi = 0}$ gives
\begin{equation*}
\begin{split}
\mathcal{F} \mathcal{T} (f,g,h)(\xi) & = \frac{1}{2\pi} \int_{\mathbb{R}^2} \int_\mathbb{R} \int_{\mathbb{R}^2} e^{-iz\xi} f(x+z) g(\lambda x^\perp + z) 
\overline{h(x+\lambda x^\perp +z)}\,dx\,d\lambda \,dz \\
& = \frac{1}{16 \pi^4} \int \int \int \int  \int \int e^{iz(-\xi+\alpha+\beta-\gamma)} e^{ix(\alpha-\lambda \beta^\perp + \lambda
\gamma^\perp - \gamma)} \widehat{f}(\alpha) \widehat{g} (\beta) \overline{\widehat{h}(\gamma)} \,d\alpha \,d\beta \,
d\gamma \,dx\,d\lambda \,dz \\
& = \int \int \widehat{f} \left( \xi + \frac{1}{\lambda} \xi^\perp - \frac{1}{\lambda} \beta^\perp \right) \widehat{g}(\beta) 
\overline{\widehat{h} \left( \frac{1}{\lambda} \xi^\perp + \beta - \frac{1}{\lambda} \beta^\perp \right)}\,d\beta \,
\frac{d \lambda}{\lambda^2}.
\end{split}
\end{equation*}
Changing variables to $\eta = \frac{1}{\lambda} ( \xi - \beta)^\perp$ gives
$$
\mathcal{F} \mathcal{T} (f,g,h)(\xi) = \int_{\mathbb{R}} \int_{\mathbb{R}^2} \widehat{f} (\xi + \eta) \widehat{g} (\xi+\lambda \eta^\perp)
\overline{\widehat{h} ( \eta + \lambda \eta^\perp + \xi )} \,d\eta\,d\lambda,
$$
which is the desired result.
\end{proof}

 The next result shows that $\mathcal{E}$ can be related to the $L_t^4L_x^4$ Strichartz norm associated to the linear flows $\e^{it \Delta}$ and $\e^{-it H}$.
\begin{lemm}\ph\label{lem11}
The following formulations for $\mathcal{E}$ hold
\begin{eqnarray}
\mathcal E(f_1,f_2,f_3,f_4)&=& 2\pi  \int_{\R}\int_{\R^2}  (\e^{it \Delta}f_1)   (\e^{it \Delta}f_2) (\ov{ \e^{it \Delta}f_3}) (\ov{\e^{it \Delta}f_4}) dx\, dt \label{eq1}\\
&=& 2\pi  \int_{-\frac{\pi}4}^{\frac{\pi}4}\int_{\R^2}  (\e^{-it H}f_1)   (\e^{-it H}f_2) (\ov{ \e^{-it H}f_3}) (\ov{\e^{-it H}f_4}) dx\, dt\label{eq2}.
\end{eqnarray}
Therefore we have
\begin{eqnarray}
\T(f_1,f_2,f_3)&=&2\pi \int_{\R}   \e^{-it \Delta}  \Big[ (\e^{it \Delta}f_1) ({ \e^{it \Delta}f_2}) (\ov{\e^{it \Delta}f_3})\Big] dt\nonumber\\
&=&2\pi  \int_{-\frac{\pi}4}^{\frac{\pi}4}   \e^{it H}  \Big[ (\e^{-it H}f_1) ({ \e^{-it H}f_2}) (\ov{\e^{-it H}f_3})\Big] dt \label{eq3}.
\end{eqnarray}
\end{lemm}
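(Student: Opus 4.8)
The plan is to prove \eqref{eq1} by computing $\mathcal E$ on the Fourier side and recognising it as an integral over the NLS resonant manifold, to deduce \eqref{eq2} from \eqref{eq1} via the lens transform \eqref{lenstransf}, and to obtain \eqref{eq3} from \eqref{eq1}--\eqref{eq2} by duality. For \eqref{eq1} I would insert Fourier inversion for each $f_j$ into the definition of $\mathcal E(f_1,f_2,f_3,f_4)$: the $z$-integral produces the momentum delta $\delta(\xi_1+\xi_2-\xi_3-\xi_4)$, the $x$-integral over $\R^2$ produces $\delta\big((\xi_1-\xi_3)-\lambda(\xi_2-\xi_3)^{\perp}\big)$ (using $x^{\perp}\cdot\eta=-x\cdot\eta^{\perp}$), and integrating in $\lambda\in\R$ collapses this two-dimensional delta, along the line of its zeros, to the scalar delta $\delta\big((\xi_1-\xi_3)\cdot(\xi_2-\xi_3)\big)$. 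On the support of the momentum constraint one has
\[
(\xi_1-\xi_3)\cdot(\xi_2-\xi_3)=-\tfrac12\big(|\xi_1|^2+|\xi_2|^2-|\xi_3|^2-|\xi_4|^2\big),
\]
so, up to a constant, $\mathcal E(f_1,f_2,f_3,f_4)$ equals the integral of $\wh{f_1}(\xi_1)\wh{f_2}(\xi_2)\overline{\wh{f_3}(\xi_3)}\,\overline{\wh{f_4}(\xi_4)}$ against $\delta(\xi_1+\xi_2-\xi_3-\xi_4)\,\delta(|\xi_1|^2+|\xi_2|^2-|\xi_3|^2-|\xi_4|^2)$, i.e. over the resonant set for cubic NLS on $\R^2$. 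Running the same computation in reverse shows that the right-hand side of \eqref{eq1} equals that same resonant integral — after expanding each $\e^{it\Delta}f_j$ in Fourier, the $x$-integral over $\R^2$ regenerates the momentum delta and the $t$-integral over $\R$ regenerates the resonance delta. Bookkeeping of the normalisation constants (carried out rigorously first for $f_j\in\mathcal S(\R^2)$, then by density together with the $L^2\times L^2\times L^2\to L^2$ bound on $\T$) then gives \eqref{eq1}.

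For \eqref{eq2} I would feed the lens transform \eqref{lenstransf} into \eqref{eq1}: it writes $\e^{it\Delta}f$ as an explicit unimodular phase times a positive amplitude times $\e^{-isH}f$ evaluated at a rescaled point, under a tangent-type reparametrisation of time carrying $s\in(-\frac{\pi}{4},\frac{\pi}{4})$ bijectively onto $t\in\R$. Substituting and changing variables from $(t,x)$ to $(s,y)$ in \eqref{eq1}, the four phase factors cancel — two of the four entries are complex conjugated and the phase is the same for all four — while the fourth power of the amplitude times the Jacobian of the change of variables is a constant: this is the pseudoconformal invariance of the quadrilinear $L^4_{t,x}$ functional, $(4,4)$ being the conformal Strichartz pair in dimension $2$. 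This gives \eqref{eq2}. (One could alternatively prove \eqref{eq2} directly, as for \eqref{eq1}, by inserting the Mehler kernel of $\e^{-itH}$; the lens-transform route is cleaner and simultaneously accounts for the constant $2\pi$ and the quarter-period range.) This lens-transform step is the one I expect to be the main obstacle: one must check that the amplitude's fourth power times the Jacobian carries no residual weight, and that the time substitution lands exactly on $(-\frac{\pi}{4},\frac{\pi}{4})$ with the correct multiplicative constant.

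Finally, \eqref{eq3} follows from \eqref{eq1} by duality: carrying out the $x$-integration in \eqref{eq1} as an $L^2$ pairing and using the unitarity of $\e^{it\Delta}$ (so that $\langle g,\e^{it\Delta}f_4\rangle_{L^2}=\langle \e^{-it\Delta}g,f_4\rangle_{L^2}$), one gets
\[
\mathcal E(f_1,f_2,f_3,f_4)=\Big\langle\, 2\pi\int_{\R}\e^{-it\Delta}\big[(\e^{it\Delta}f_1)(\e^{it\Delta}f_2)\overline{\e^{it\Delta}f_3}\big]\,dt\,,\,f_4\Big\rangle_{L^2},
\]
the interchange of $\int_{\R}$ and the pairing being legitimate by the Strichartz bound of \cite{FGH}. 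Since this holds for every $f_4\in L^2(\R^2)$ while also $\mathcal E(f_1,f_2,f_3,f_4)=\langle \T(f_1,f_2,f_3),f_4\rangle_{L^2}$ with $\T(f_1,f_2,f_3)\in L^2$, the first identity of \eqref{eq3} follows; the second is obtained in the same way from \eqref{eq2}, with $\e^{-it\Delta}$ replaced by $\e^{itH}$ and the range $\R$ by $(-\frac{\pi}{4},\frac{\pi}{4})$.
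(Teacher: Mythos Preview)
Your proposal is correct and follows essentially the same approach as the paper: Fourier-side computation for \eqref{eq1}, the lens transform \eqref{lenstransf} for \eqref{eq2}, and duality for \eqref{eq3}. The only organisational difference is in \eqref{eq1}: you compute both sides as the common resonant integral $\int\widehat{f_1}\widehat{f_2}\overline{\widehat{f_3}}\,\overline{\widehat{f_4}}\,\delta(\xi_1+\xi_2-\xi_3-\xi_4)\,\delta(|\xi_1|^2+|\xi_2|^2-|\xi_3|^2-|\xi_4|^2)$, whereas the paper starts from the right-hand side of \eqref{eq1}, performs a change of variables $(\alpha,\beta,\delta)\mapsto(z,x,\lambda,\mu)$ so that the $t$-integral produces $\delta(\mu)$ and one lands directly on $\mathcal E(\widehat{f_1},\widehat{f_2},\widehat{f_3},\widehat{f_4})$, and then invokes the Fourier invariance of $\mathcal E$ (Lemma~\ref{FourierSym}) to conclude.
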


\begin{proof}
Using Fourier inversion and the identity $\frac{1}{4\pi^2} \int_{\mathbb{R}^2} e^{ix \xi} \,dx = \delta_{\xi = 0}$ gives
\begin{align*}
&A  \overset{def}{=} \int_{\mathbb{R}} \int_{\mathbb{R}^2} (\e^{it \Delta}f_1)   (\e^{it \Delta}f_2) (\ov{ \e^{it \Delta}f_3}) (\ov{\e^{it \Delta}f_4}) dx\, dt \\
& \quad = \frac{1}{(2\pi)^4} \int \int \int \int \int \int e^{-it(|\alpha|^2 + |\beta|^2 - |\gamma|^2 - |\delta|^2)} e^{ix(\alpha + \beta - \gamma - \delta)} \widehat{f_1}(\alpha) \widehat{f_2}(\beta) \overline{\widehat{f_3}(\gamma)}\,\overline{\widehat{f_4}(\delta)} \,d\alpha\,d\beta\,d\gamma\,d\delta\,dx\,dt \\
& \quad = \frac{1}{(2\pi)^2} \int \int \int \int e^{-it(|\alpha|^2 + |\beta|^2 - |\alpha+\beta-\delta|^2 - |\delta|^2)} \widehat{f_1}(\alpha) \widehat{f_2}(\beta) \overline{\widehat{f_3}(\alpha + \beta - \delta)}\,\overline{\widehat{f_4}(\delta)} \,d\alpha\,d\beta\,d\delta \,dt.
\end{align*}
Changing variables to $\delta =z$, $\alpha = z+x$, $\beta = z + \lambda x^\perp + \mu x$ and resorting to the identity $\frac{1}{2\pi} \int_{\mathbb{R}} e^{iy \xi} \,dy = \delta_{\xi = 0}$ yields
\begin{align*}
A & = \frac{1}{(2\pi)^2} \int \int \int \int \int e^{2it\mu |x|^2} \widehat{f_1}(z+x) \widehat{f_2}(z + \lambda x^\perp + \mu x) \overline{\widehat{f_3}( z + x + \lambda x^\perp + \mu x)}\, \overline{\widehat{f_4}(z)} |x|^2 \,dx\,dz\,d\lambda\,d\mu\,dt \\
& = \frac{1}{2\pi} \int \int \int \widehat{f_1}(x+z) \widehat{f_2}(z + \lambda x^\perp) \overline{\widehat{f_3}(z + x + \lambda x^\perp)} \, \overline{\widehat{f_4}(z)}\,dx\,dz\,d\lambda\\
& = \frac{1}{2\pi} \mathcal{E}(\widehat{f_1}, \widehat{f_2},\widehat{f_3} , \widehat{f_4}) = \frac{1}{2\pi} \mathcal{E}(f_1,f_2,f_3,f_4),
\end{align*}
which gives~(\ref{eq1}). Let us now prove \eqref{eq2}. Let $f\in L^2(\R^2)$, and denote $v(t,\cdot)=\e^{-itH}f$ and ${u(t,\cdot)=\e^{it\Delta}f}$. Then the lens transform gives (see for instance \cite{Tao}) 
\begin{equation}
\label{lenstransf}
u(t,x) =    \frac{1}{\sqrt{1+4t^2}}    v \Big( \frac{\arctan(2t)}{2}  , \frac{x}{\sqrt{1+4t^2} } \Big)     \e^{ \frac{i|x|^2t}{1+4t^2}  }.
\end{equation}
We first make the change of variables $\dis y= \frac{x}{\sqrt{1+4t^2} }$, then $\dis \tau= \frac{\arctan(2t)}{2}$. This gives   
 \begin{eqnarray*}
 \mathcal E(f_1,f_2,f_3,f_4)&=& 2\pi  \int_{\R}  \frac{1}{1+4t^2}\int_{\R^2}[v_1v_2\ov{v_3}\ov{v_4}]\left( \frac{\arctan(2t)}{2},y \right)   dy\, dt  \\
 &=&  2\pi  \int_{-\frac{\pi}4}^{\frac{\pi}4}\int_{\R^2}  (\e^{-i\tau H}f_1)   (\e^{-i\tau H}f_2) (\ov{ \e^{-i\tau H}f_3}) (\ov{\e^{-i\tau H}f_4}) dy\,d\tau. 
 \end{eqnarray*}
The relations for $\T$ are obtained using that $\<  \T(f_1,f_2,f_3),f_4  \>_{L^2(\R^2)}= \mathcal{E}(f_1,f_2,f_3,f_4)$.
\end{proof}
 We are now able to prove the following result
 \begin{lemm}\ph \label{lemproj}
The following formulations for $\mathcal{E}$ hold
\begin{equation*}
\mathcal E(f_1,f_2,f_3,f_4)= \pi^2 \sum_{n_1+n_2=n_3+n_4}   \int_{\R^2}  (\Pi_{n_1}f_1)   (\Pi_{n_2}f_2) (\ov{ \Pi_{n_3}f_3}) (\ov{ \Pi_{n_4}f_4}) dx.
\end{equation*}
Therefore we have
\begin{equation*}
\T(f_1,f_2,f_3)=\pi^2 \sum_{n_1+n_2=n_3+n_4}  \Pi_{n_4}\Big( (\Pi_{n_1}f_1)   (\Pi_{n_2}f_2) (\ov{ \Pi_{n_3}f_3}) \Big).
\end{equation*}
\end{lemm}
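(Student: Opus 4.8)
The plan is to start from the spectral formula \eqref{eq2} for $\mathcal{E}$, namely
$$
\mathcal E(f_1,f_2,f_3,f_4)= 2\pi  \int_{-\frac{\pi}4}^{\frac{\pi}4}\int_{\R^2}  (\e^{-it H}f_1)   (\e^{-it H}f_2) (\ov{ \e^{-it H}f_3}) (\ov{\e^{-it H}f_4})\, dx\, dt,
$$
and to expand each $f_j$ in the eigenbasis of $H$ by inserting $\sum_{n_j\ge 0}\Pi_{n_j}f_j$. Since $H\Pi_n = 2(n+1)\Pi_n$, we have $\e^{-itH}\Pi_{n_j}f_j = \e^{-2it(n_j+1)}\Pi_{n_j}f_j$, so the time dependence factors out as a pure phase $\e^{-2it(n_1+n_2-n_3-n_4)}$ (the four $+1$'s cancel in pairs because of the two conjugates). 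Hence
$$
\mathcal E(f_1,f_2,f_3,f_4)= 2\pi \sum_{n_1,n_2,n_3,n_4\ge 0}\left(\int_{-\frac{\pi}4}^{\frac{\pi}4}\e^{-2it(n_1+n_2-n_3-n_4)}\,dt\right)\int_{\R^2}(\Pi_{n_1}f_1)(\Pi_{n_2}f_2)\ov{(\Pi_{n_3}f_3)}\,\ov{(\Pi_{n_4}f_4)}\,dx.
$$
The $t$-integral is $\int_{-\pi/4}^{\pi/4}\e^{-2itk}\,dt = \frac{\sin(\pi k/2)}{k}$ for $k=n_1+n_2-n_3-n_4\in\Z$, which vanishes for every nonzero \emph{even} $k$ and equals $\pi/2$ at $k=0$. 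Since $n_1+n_2$ and $n_3+n_4$ have, a priori, no parity constraint, one must also check the odd-$k$ terms: there $\frac{\sin(\pi k/2)}{k}=\pm\frac{2}{\pi k}\ne 0$, so these do \emph{not} vanish from the time integral alone.

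The resolution of the odd-$k$ terms is the place where the structure of $\mathcal{T}$ enters, and I expect it to be the main (and only genuine) obstacle. The point is that $\mathcal{E}(f_1,f_2,f_3,f_4)$ is already known, from \eqref{eq1} and the explicit rectangle integral, to be invariant under Fourier transform (Lemma \ref{FourierSym}), and the Fourier transform acts on the $n$-th Hermite eigenspace as multiplication by $(-i)^n$ (equivalently, $\e^{i\pi n/2}$, using that $\mathcal F = \e^{-i\pi H/4}$ up to normalization). Applying $\mathcal F$ to each slot multiplies the $(n_1,n_2,n_3,n_4)$ summand by $(-i)^{n_1}(-i)^{n_2}\ov{(-i)^{n_3}}\,\ov{(-i)^{n_4}} = (-i)^{\,n_1+n_2-n_3-n_4} = (-i)^k$; since the whole sum is Fourier-invariant, every summand with $(-i)^k\ne 1$ must vanish. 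In particular all odd-$k$ summands vanish, so only $k=0$ survives, and then the $t$-integral contributes the constant $\pi/2$, giving the claimed prefactor $2\pi\cdot\frac{\pi}{2}=\pi^2$ and the constraint $n_1+n_2=n_3+n_4$.

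Finally, the formula for $\mathcal{T}$ follows by duality: since $\langle \mathcal T(f_1,f_2,f_3),f_4\rangle_{L^2} = \mathcal E(f_1,f_2,f_3,f_4)$ for all $f_4$, and the right-hand side of the displayed $\mathcal{E}$-formula equals
$$
\pi^2 \sum_{n_1+n_2=n_3+n_4}\Big\langle \Pi_{n_4}\big((\Pi_{n_1}f_1)(\Pi_{n_2}f_2)\ov{(\Pi_{n_3}f_3)}\big),\,f_4\Big\rangle_{L^2}
$$
(moving $\Pi_{n_4}$ onto $f_4$ by self-adjointness and using $\ov{\Pi_{n_4}f_4}$ under the integral), one reads off the stated identity for $\mathcal{T}(f_1,f_2,f_3)$. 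I would also remark briefly on convergence: for $f_j\in L^2$ the Hermite series converge in $L^2$, the bilinear/trilinear products are controlled by the $L^2\to L^2$ (and $L^4$ Strichartz) boundedness of $\mathcal T$ recalled in Section \ref{known results}, so interchanging the sum with the integral is legitimate; alternatively one proves the identity first for $f_j$ finite linear combinations of special Hermite functions and passes to the limit. As an alternative to the Fourier-invariance argument for killing the odd terms, one could instead use the scaling/rotation symmetries, or directly invoke \eqref{eq1} with the rescaled-time representation, but the Fourier symmetry is the cleanest.
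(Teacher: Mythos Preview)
Your proof is correct and follows the same overall strategy as the paper: start from \eqref{eq2}, expand in Hermite eigenspaces, compute the time integral, and argue that the contributions with $k=n_1+n_2-n_3-n_4$ odd vanish. The difference lies only in how the odd-$k$ terms are killed. The paper uses the parity of the Hermite eigenspaces directly: since $\Pi_n f(-x)=(-1)^n\Pi_n f(x)$, the spatial integral $\int_{\R^2}(\Pi_{n_1}f_1)(\Pi_{n_2}f_2)\ov{(\Pi_{n_3}f_3)}\,\ov{(\Pi_{n_4}f_4)}\,dx$ picks up a factor $(-1)^{n_1+n_2+n_3+n_4}$ under $x\mapsto -x$ and hence vanishes unless $k$ is even. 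You instead invoke the Fourier invariance of $\mathcal{E}$ (Lemma~\ref{FourierSym}) together with $\mathcal F|_{E_n}=(-i)^n$. The two arguments are essentially the same symmetry seen through different lenses (indeed $\mathcal F^2$ is the parity operator), and both are perfectly valid. The parity route is marginally more direct since it does not call on Lemma~\ref{FourierSym}; your Fourier route incidentally proves a bit more, namely that the spatial integral already vanishes unless $k\equiv 0\pmod 4$.

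One small point of precision: the inference ``the whole sum is Fourier-invariant, hence every summand with $(-i)^k\ne 1$ must vanish'' is not valid as a bare implication about sums. What makes it work here is that the Fourier identity $\mathcal E(f_1,f_2,f_3,f_4)=\mathcal E(\wh{f_1},\wh{f_2},\wh{f_3},\wh{f_4})$ holds for \emph{arbitrary} $f_j$; taking $f_j\in E_{n_j}$ isolates a single term and forces it to satisfy $I=(-i)^k I$. You effectively say this at the end when you mention proving the identity first for $f_j$ in fixed eigenspaces, but it would be cleaner to place that reduction before the Fourier step rather than after.
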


\begin{proof}
We compute $\mathcal{E}$  in \eqref{eq2} for the eigenfunctions of $H$. Therefore we assume that $\Pi_{n_j}f_j=f_j$ and then 
\begin{equation*}
\mathcal E(f_1,f_2,f_3,f_4)= 2\pi \int_{-\frac{\pi}4}^{\frac{\pi}4}\e^{-2i(n_1+n_2-n_3-n_4)t}dt  \int_{\R^2} f_1f_2\ov{ f_3f_4}dx.
\end{equation*}
But now we use that $\Pi_{n_j}f(-x)=(-1)^{n_j}\Pi_{n_j}f(x)$, thus $\int_{\R^2} f_1f_2\ov{ f_3f_4}dx=0$ unless $n_1+n_2-n_3-n_4$ is even, which in turn implies $\int_{-\frac{\pi}4}^{\frac{\pi}4}\e^{-2i(n_1+n_2-n_3-n_4)t}dt=\frac{\pi}{2} \delta(n_1+n_2-n_3-n_4)$.
\end{proof}

\subsection{\texorpdfstring{Symmetries of $\mathcal{T}$ and conservation laws for \eqref{CR}}{Symmetries of T and conservation laws for (CR)}} 
\begin{lemm}\ph\label{lemconj}
The commutation relation
\begin{equation}
\label{commute}
Q\big(\T(f_1,f_2,f_3)\big)= \T(Qf_1,f_2,f_3) +\T(f_1,Qf_2,f_3) -\T(f_1,f_2,Qf_3)
\end{equation}
holds for the (self-adjoint) operators
$$
Q = 1 \;,\; x \;,\; |x|^2 \;,\; i\nabla \;,\; \Delta \;,\; H \;,\; L = ix \times \nabla \;,\; i(x\cdot \nabla + 1)
$$
for all $f_1,f_2,f_3 \in \mathcal{D}(Q)$, where $\mathcal{D}(Q)$ denotes the domain of $Q$.
\end{lemm}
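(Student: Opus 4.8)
The plan is to verify \eqref{commute} separately for two families of operators: the ``real'' ones $1$, $x$, $|x|^2$, $H$ (and then $\Delta$), for which the identity comes from an algebraic or spectral identity substituted into an explicit formula for $\mathcal T$, and the ``imaginary'' ones $i\nabla$, $L$, $i(x\cdot\nabla+1)$, each of which is $i$ times the infinitesimal generator of a symmetry of $\mathcal T$. Two preliminary remarks: the claimed identity is additive in $Q$ over $\R$ (if it holds for $Q_1$ and $Q_2$, it holds for $aQ_1+bQ_2$ with $a,b\in\R$), although not over $\C$, since $\mathcal T$ is conjugate-linear in its third slot; and for $Q=1$ it reads $\mathcal T=\mathcal T+\mathcal T-\mathcal T$, which is trivial. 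I would first prove each identity for $f_1,f_2,f_3$ in the Schwartz class, where all the manipulations below are classical, and then extend to $\mathcal{D}(Q)$ by density.

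For $Q=x$ and $Q=|x|^2$ I would substitute directly into the defining integral \eqref{lim}. Set $a=x+z$, $b=\lambda x^\perp+z$, $c=x+\lambda x^\perp+z$, so that $z$, $a$, $c$, $b$ are the vertices of a rectangle; then $z=a+b-c$, and, using $x\cdot x^\perp=0$, $|z|^2=|a|^2+|b|^2-|c|^2$. As $a,b,c,z$ are real, multiplication by a component of $z$ or by $|z|^2$ commutes with complex conjugation, so for instance
\[
|z|^2\, f_1(a)f_2(b)\overline{f_3(c)}=(|x|^2f_1)(a)\,f_2(b)\,\overline{f_3(c)}+f_1(a)\,(|x|^2f_2)(b)\,\overline{f_3(c)}-f_1(a)\,f_2(b)\,\overline{(|x|^2f_3)(c)},
\]
and integrating in $(x,\lambda)$ yields \eqref{commute}; the minus sign on the third term is forced, since the vertex $c$ opposite to $z$ enters these rectangle identities with a minus and $c$ is exactly where the conjugated factor $f_3$ is evaluated. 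For $Q=H$ I would use the diagonalization formula of Lemma~\ref{lemproj}: applying $H$ to $\pi^2\,\Pi_{n_4}(\cdots)$ produces the factor $2(n_4+1)$, and on the resonance set $n_1+n_2=n_3+n_4$ one has $2(n_4+1)=2(n_1+1)+2(n_2+1)-2(n_3+1)$; distributing this and using $2(n_j+1)\Pi_{n_j}f_j=\Pi_{n_j}(Hf_j)$ (with $2(n_3+1)\in\R$ passing through the conjugate) recovers the three terms. Finally $\Delta=|x|^2-H$ is a real combination of two operators for which \eqref{commute} now holds, so it holds for $f_j\in\mathcal{D}(|x|^2)\cap\mathcal{D}(H)=\H^2$, and then on all of $\mathcal{D}(\Delta)=H^2$ by density. (Alternatively, one can get the $\Delta$-identity from \eqref{eq3} directly: along $u_j(t)=\e^{it\Delta}f_j$ one has $\Delta u_j=-i\partial_t u_j$, so $\Delta$ applied to the three factors with the signs of \eqref{commute} assembles into $-i\partial_t(u_1u_2\overline{u_3})$, whose integral over $t\in\R$ is a boundary term that vanishes for Schwartz data by the two-dimensional dispersive decay; the same device with $\e^{-itH}$ and the $\pi/2$-periodicity in $t$ of the integrand in \eqref{eq3} handles $H$ this way as well.)

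For $Q=i\nabla$, $Q=L$ and $Q=i(x\cdot\nabla+1)$ I would differentiate covariances of $\mathcal T$. Formula \eqref{lim} shows at once that $\mathcal T$ commutes with translations, $\mathcal T(\tau_h f_1,\tau_h f_2,\tau_h f_3)=\tau_h\mathcal T(f_1,f_2,f_3)$ with $\tau_h f=f(\cdot+h)$; with rotations, $\mathcal T(f_1\circ R,f_2\circ R,f_3\circ R)=\mathcal T(f_1,f_2,f_3)\circ R$ (using that $R$ commutes with $x\mapsto x^\perp$ and the change of variable $y=Rx$); and with the $L^2$-unitary scalings $S_\lambda$, $\mathcal T(S_\lambda f_1,S_\lambda f_2,S_\lambda f_3)=S_\lambda\mathcal T(f_1,f_2,f_3)$ (change of variable $y=\lambda x$ and $(\lambda x)^\perp=\lambda x^\perp$). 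The infinitesimal actions of these three groups are $f\mapsto\nabla f$, $f\mapsto iLf$ and $f\mapsto(x\cdot\nabla+1)f$ respectively, so differentiating the three covariances at the identity element gives \emph{all-plus} identities for the generators $\nabla$ and $x\cdot\nabla+1$, and directly \eqref{commute} for $L$. The minus sign on the third slot again comes from conjugate-linearity there: $\mathcal T(f_1,f_2,iLf_3)=-i\,\mathcal T(f_1,f_2,Lf_3)$ while $\mathcal T(iLf_1,f_2,f_3)=i\,\mathcal T(Lf_1,f_2,f_3)$; multiplying the all-plus identities for $\nabla$ and for $x\cdot\nabla+1$ by the scalar $i$ and using the same observation turns them into \eqref{commute} for $i\nabla$ and $i(x\cdot\nabla+1)$.

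Finally, to pass from Schwartz functions to $\mathcal{D}(Q)$: the Schwartz class is dense in $\mathcal{D}(Q)$ for each $Q$ in the list, $\mathcal T$ is continuous on $L^2\times L^2\times L^2$ (a known result recalled above), and $Q$ is closed, so both sides of \eqref{commute} extend continuously and the identity persists on $\mathcal{D}(Q)$. The one genuinely non-formal point is the $\Delta$ case: $\Delta$ is not a derivation, so one really must either route through $|x|^2$ and $H$ or invoke the $\Delta u=-i\partial_t u$ identity along the Schr\"odinger flow together with the decay in time that kills the boundary term; this is where I expect the most care to be needed.
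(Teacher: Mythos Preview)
Your proof is correct, and for $Q=1,x,|x|^2,L,i(x\cdot\nabla+1)$ it coincides with the paper's argument almost verbatim (rectangle identities for the first three, differentiation of the rotation and scaling covariances for the last two). The route diverges for the triad $i\nabla,\Delta,H$. The paper invokes the Fourier invariance $\mathcal F\mathcal T(f_1,f_2,f_3)=\mathcal T(\widehat{f_1},\widehat{f_2},\widehat{f_3})$ (Lemma~\ref{FourierSym}) to transport the already-proved identities for $x$ and $|x|^2$ to $i\nabla$ and $\Delta$, and then obtains $H=-\Delta+|x|^2$ by real-linear combination. You instead get $i\nabla$ by differentiating the translation covariance of~\eqref{lim}, get $H$ from the spectral projector formula of Lemma~\ref{lemproj} together with the resonance relation $2(n_4+1)=2(n_1+1)+2(n_2+1)-2(n_3+1)$, and then recover $\Delta=|x|^2-H$. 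Both routes are sound and non-circular (Lemma~\ref{lemproj} is proved before Lemma~\ref{lemconj} and does not use it). The paper's approach is slightly more economical in that it uses a single symmetry (Fourier) to handle $i\nabla$ and $\Delta$ simultaneously and avoids the eigenspace decomposition; your approach has the minor advantage that the $H$ case is proved directly rather than as a consequence, and your alternative for $\Delta$ via $\Delta u=-i\partial_t u$ along the Schr\"odinger flow is a nice independent check. The density argument you give at the end matches the paper's.
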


\begin{proof}
First observe that it suffices to prove the commutation relation for $f_1,f_2, f_3$ sufficiently smooth, and then argue by density and $L^2$ boundedness of $\mathcal{T}$.
For $Q= 1$, $x$, $|x|^2$, this follows easily from the definition~\eqref{lim} of $\mathcal{T}$, in particular the fact that the arguments of $ f_1, f_2, f_3$ satisfy
\begin{align*}
& z + (x + \lambda x^\perp + z) = (x+z) + (\lambda x^\perp + z) \\
& |z|^2 + |x + \lambda x^\perp + z|^2 = |x+z| + |\lambda x^\perp + z|^2.
\end{align*}
Using~\eqref{fourier} and arguing similarly gives \eqref{commute} for $P= i\nabla$ and $\Delta$. Combining $|x|^2$ and $\Delta$ gives~(\ref{commute}) for $Q=H$. Finally, to obtain this commutation relation for $Q=ix \times \nabla$ and $i(x\cdot \nabla + 1)$, define $R_\lambda$ to be the rotation of angle $\lambda$ around the origin, and $S_\lambda$ the scaling transformation $S_\lambda u = \lambda u (\lambda \cdot)$, observe that
\begin{align*}
& R_\lambda \mathcal{T}(u,u,u)= \mathcal{T} (R_\lambda u, R_\lambda u,R_\lambda u) \\
& S_\lambda \mathcal{T}(u,u,u)= \mathcal{T} (S_\lambda u, S_\lambda u,S_\lambda u)
\end{align*}
and differentiate in $\lambda$.
\end{proof}

\begin{coro}
\label{loriot}
If $Q$ is as in Lemma~\ref{lemconj}, and $f_1,f_2,f_3 \in L^2(\mathbb{R}^2)$ are such that $Q f_j = \lambda_{j} f_j$, with $\lambda_j \in \mathbb{C}$ and $j=1,2,3$, then
$$
Q \big(\T(f_1,f_2,f_3)\big) = (\lambda_{1}+\lambda_{2}-\lambda_{3})\T(f_1,f_2,f_3)
$$
and for $s \in \mathbb{R}$,
$$
e^{isQ}  \T(f_1,f_2,f_3) = \T(e^{isQ}f_1,e^{isQ}f_2,e^{isQ}f_3) \quad \mbox{and} \quad \mathcal{E}(e^{isQ} f) = \mathcal{E}(f) .
$$
\end{coro}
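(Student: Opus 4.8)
The plan is to deduce the corollary directly from the commutation relation \eqref{commute} of Lemma~\ref{lemconj}, specialized to eigenfunctions. First, suppose $Qf_j = \lambda_j f_j$ for $j=1,2,3$ with $f_j$ in a suitable core. Plugging these into \eqref{commute} and using linearity of $\mathcal{T}$ in each slot, the right-hand side becomes
$$
\lambda_1 \mathcal{T}(f_1,f_2,f_3) + \lambda_2 \mathcal{T}(f_1,f_2,f_3) - \lambda_3 \mathcal{T}(f_1,f_2,f_3) = (\lambda_1 + \lambda_2 - \lambda_3)\mathcal{T}(f_1,f_2,f_3),
$$
which is exactly the claimed identity $Q(\mathcal{T}(f_1,f_2,f_3)) = (\lambda_1+\lambda_2-\lambda_3)\mathcal{T}(f_1,f_2,f_3)$. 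One caveat: the list in Lemma~\ref{lemconj} includes operators such as $x$ and $i\nabla$ which are vector-valued; for those one should read the statement componentwise (each component of $x$, resp. $i\nabla$, being a scalar self-adjoint operator), and the conclusion then holds with $\lambda_j \in \mathbb{C}^2$ understood componentwise. For the scalar operators $1, |x|^2, \Delta, H, L, i(x\cdot\nabla+1)$ the statement is literally as written.

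Next, for the exponentiated version, the cleanest route is to iterate the commutation relation, or equivalently to differentiate. Consider $g(s) = \mathcal{T}(e^{isQ}f_1, e^{isQ}f_2, e^{isQ}f_3)$ and $h(s) = e^{-isQ}\mathcal{T}(f_1,f_2,f_3)$ — here I restrict first to $f_j$ in a common core on which $e^{isQ}$ acts smoothly, and where $\mathcal{T}$ maps into $\mathcal{D}(Q)$, so that both $g$ and $h$ are differentiable. Differentiating $g$ in $s$ and applying \eqref{commute} with $Qf_j$ replaced by $iQ e^{isQ}f_j$ gives
$$
g'(s) = i\,\mathcal{T}(Qe^{isQ}f_1, e^{isQ}f_2, e^{isQ}f_3) + i\,\mathcal{T}(e^{isQ}f_1, Qe^{isQ}f_2, e^{isQ}f_3) - i\,\mathcal{T}(e^{isQ}f_1, e^{isQ}f_2, Qe^{isQ}f_3) = -i\,Q\,g(s),
$$
using that the sign on the third term of \eqref{commute} is negative (which matches the complex conjugate in the third slot of $\mathcal{T}$). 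Meanwhile $h'(s) = -iQ\,h(s)$ trivially, and $g(0) = h(0) = \mathcal{T}(f_1,f_2,f_3)$. By uniqueness for this linear ODE in $L^2$, $g \equiv h$, i.e. $e^{isQ}\mathcal{T}(f_1,f_2,f_3) = \mathcal{T}(e^{isQ}f_1,e^{isQ}f_2,e^{isQ}f_3)$. Alternatively, one simply expands $e^{isQ}f_j = \sum_k \frac{(isQ)^k}{k!}f_j$ on the eigenfunction side after reducing to eigenfunctions by the spectral theorem; this avoids the ODE but needs some care with convergence. Finally, the Hamiltonian invariance $\mathcal{E}(e^{isQ}f) = \mathcal{E}(f)$ follows by taking $f_1=f_2=f_3=f_4=e^{isQ}f$ in $\mathcal{E}(f_1,f_2,f_3,f_4) = \langle \mathcal{T}(f_1,f_2,f_3), f_4\rangle_{L^2}$, using the just-proved identity for $\mathcal{T}$, and then using that $e^{isQ}$ is unitary (since $Q$ is self-adjoint): $\langle e^{isQ}\mathcal{T}(f,f,f), e^{isQ}f\rangle = \langle \mathcal{T}(f,f,f), f\rangle = \mathcal{E}(f)$.

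The only genuinely delicate point is the density/domain argument: the first identity should be extended from a core to all of $L^2$ (for operators where that makes sense, e.g. one argues that if $f_j \in L^2$ with $Qf_j = \lambda_j f_j$, then automatically $f_j$ lies in $\mathcal{D}(Q^\infty)$ by the spectral theorem, so no approximation is actually needed for the first statement), while for the exponentiated identity one approximates arbitrary $f_j \in L^2$ by core elements and passes to the limit using the $L^2$-boundedness of $\mathcal{T}$ (recalled in Section~\ref{known results}) together with strong continuity of the unitary group $e^{isQ}$. This is routine once one invokes $L^2$-continuity of $\mathcal{T}$, exactly as in the proof of Lemma~\ref{lemconj}. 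I expect this limiting step, rather than the algebra, to be the part requiring the most care.
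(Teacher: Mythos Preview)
Your approach is correct and is precisely what the paper has in mind: the corollary is stated without proof, as an immediate consequence of the commutation relation~\eqref{commute}. Two small corrections are worth flagging. First, $\mathcal{T}$ is \emph{conjugate}-linear in its third slot, not linear; so $-\mathcal{T}(f_1,f_2,Qf_3) = -\overline{\lambda_3}\,\mathcal{T}(f_1,f_2,f_3)$, and the stated conclusion $(\lambda_1+\lambda_2-\lambda_3)$ holds because $Q$ is self-adjoint and hence $\lambda_3\in\mathbb{R}$. You clearly know this (your parenthetical about the third slot says as much), but the phrase ``linearity of $\mathcal{T}$ in each slot'' should be amended.

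Second, there is a sign slip in the ODE argument: the three-term expression you obtain for $g'(s)$ is exactly $i$ times the right-hand side of~\eqref{commute} (with $f_j$ replaced by $e^{isQ}f_j$), so $g'(s)=iQ\,g(s)$, not $-iQ\,g(s)$. Correspondingly one should compare with $h(s)=e^{isQ}\mathcal{T}(f_1,f_2,f_3)$, which satisfies $h'(s)=iQ\,h(s)$, and then $g\equiv h$ yields the desired identity. With your signs as written, $g\equiv h$ would give $\mathcal{T}(e^{isQ}f_1,e^{isQ}f_2,e^{isQ}f_3)=e^{-isQ}\mathcal{T}(f_1,f_2,f_3)$, contradicting the conclusion you then state. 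Once this slip is fixed, the argument is complete; the density and domain remarks at the end are appropriate and match the spirit of the proof of Lemma~\ref{lemconj}.
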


In particular, if $f_1, f_2$, and $f_3$ are eigenfunctions of $Q$ as in Lemma~\ref{lemconj}, then so is $\T(f_1, f_2, f_3)$. This corollary hints towards examining $\mathcal{T}$ in a basis which simultaneously diagonalizes both $H$ and the angular momentum operator $L = i( x\times \nabla)=  i(x_2 \partial_{x_1} - x_1 \partial_{x_2})$; this will be done in the next section.

\begin{lemm}\ph If $Q$ is an  operator so that for all $f\in \mathcal{S}(\R^2)$
\begin{equation*}
Q\big(\T(f,f,f)\big)= 2\T(Qf,f,f) -\T(f,f,Q^{\star}f),
\end{equation*}
then 
\begin{equation*}
\int_{\R^2} (Qu) \ov{u}
\end{equation*}
is a conservation law for \eqref{CR}. Applying this to
$$
Q = 1 \;,\; x \;,\; |x|^2 \;,\; i\nabla \;,\; \Delta \;,\; H \;,\; L \;,\; i(x\cdot \nabla + 1)
$$
gives the conserved quantities
$$
M= \int |u|^2 \;,\; \int x |u|^2 \;,\; \int|x|^2 |u|^2 \;,\; \int i\nabla u \,\overline{u} \;,\; \int |\nabla u|^2 \;,\; \int H u \,\overline{u} \;,\; P=\int L u \,\overline{u} \;,\; \int i(x\cdot \nabla + 1) u \,\overline{u}
$$
(which are real-valued since we chose $Q$ self-adjoint).
\end{lemm}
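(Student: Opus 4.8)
The plan is a direct differentiation along the flow. Fix a (smooth, well-localized) solution $u$ of \eqref{CR} and set $E(t)=\int_{\R^2}(Qu)\ov u\,dx=\langle Qu,u\rangle_{L^2}$. Since \eqref{CR} reads $\partial_t u=-i\,\T(u,u,u)$, the product rule together with self-adjointness bookkeeping ($\langle Q\partial_t u,u\rangle=\langle\partial_t u,Q^{\star}u\rangle$) and the identity $\langle\T(f_1,f_2,f_3),f_4\rangle_{L^2}=\mathcal E(f_1,f_2,f_3,f_4)$ gives
\[
E'(t)=\langle Q\partial_t u,u\rangle+\langle Qu,\partial_t u\rangle=-i\,\langle\T(u,u,u),Q^{\star}u\rangle+i\,\ov{\langle\T(u,u,u),Qu\rangle}=-i\,\mathcal E(u,u,u,Q^{\star}u)+i\,\ov{\mathcal E(u,u,u,Qu)}.
\]
So it suffices to establish the identity $\mathcal E(u,u,u,Q^{\star}u)=\ov{\mathcal E(u,u,u,Qu)}$.

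For this I would use two ingredients. First, the symmetries of $\mathcal E$ that are transparent from the Strichartz representation \eqref{eq1} of Lemma~\ref{lem11}: $\mathcal E$ is invariant under permuting its first two arguments, invariant under permuting its last two arguments, and satisfies $\ov{\mathcal E(f_1,f_2,f_3,f_4)}=\mathcal E(f_3,f_4,f_1,f_2)$. In particular $\ov{\mathcal E(u,u,u,Qu)}=\mathcal E(u,Qu,u,u)$, so the target reduces to $\mathcal E(u,u,u,Q^{\star}u)=\mathcal E(u,Qu,u,u)$. Second, the hypothesis on $Q$: pairing the relation $Q\,\T(u,u,u)=2\T(Qu,u,u)-\T(u,u,Q^{\star}u)$ with $u$ in $L^2$ yields
\[
\mathcal E(u,u,u,Q^{\star}u)=\langle Q\,\T(u,u,u),u\rangle=2\,\mathcal E(Qu,u,u,u)-\mathcal E(u,u,Q^{\star}u,u),
\]
and applying the slot-symmetries of $\mathcal E$ to the right-hand side rewrites this as $\mathcal E(u,u,u,Q^{\star}u)=2\,\mathcal E(u,Qu,u,u)-\mathcal E(u,u,u,Q^{\star}u)$, i.e.\ exactly $\mathcal E(u,u,u,Q^{\star}u)=\mathcal E(u,Qu,u,u)$. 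Hence $E'(t)=0$.

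For the concrete list it only remains to verify the hypothesis for each $Q\in\{1,x,|x|^2,i\nabla,\Delta,H,L,i(x\cdot\nabla+1)\}$. Each of these is self-adjoint, so $Q^{\star}=Q$, and specializing the commutation relation \eqref{commute} of Lemma~\ref{lemconj} to $f_1=f_2=f_3=f$ gives $Q\,\T(f,f,f)=\T(Qf,f,f)+\T(f,Qf,f)-\T(f,f,Qf)=2\T(Qf,f,f)-\T(f,f,Q^{\star}f)$, where the last step uses that $\T$ is symmetric in its first two arguments (this follows from the slot-symmetry of $\mathcal E$, since $\langle\T(f_1,f_2,f_3),g\rangle=\mathcal E(f_1,f_2,f_3,g)$ for all $g$). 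Substituting these operators into $\int(Qu)\ov u$ then reads off the stated conserved quantities, which are real-valued because $Q=Q^{\star}$. I do not expect a genuine obstacle here: the only points needing care are the bookkeeping of conjugations and of which symmetry of $\mathcal E$ is invoked where, and the routine justification of the manipulations — clean for Schwartz solutions, which the flow preserves because $\T$ preserves regularity and decay, with the general case (and the weighted-space hypotheses needed for the $x$- and $\nabla$-dependent quantities) following by density and continuity of $\T$ and of the flow map.
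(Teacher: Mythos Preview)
Your proof is correct and follows essentially the same route as the paper's: differentiate $\langle Qu,u\rangle$ along the flow, pair the hypothesis $Q\T(u,u,u)=2\T(Qu,u,u)-\T(u,u,Q^{\star}u)$ with $u$, and invoke the slot-symmetries of $\mathcal E$ to conclude. The only differences are cosmetic---you phrase the key identity as $\mathcal E(u,u,u,Q^{\star}u)=\overline{\mathcal E(u,u,u,Qu)}$ while the paper writes it as $\langle\T(u),Q^{\star}u\rangle=\langle Qu,\T(u)\rangle$, and you spell out more explicitly which symmetry of $\mathcal E$ is used at each step and why each listed $Q$ satisfies the hypothesis via Lemma~\ref{lemconj}.
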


\begin{proof}
We compute
\begin{equation}\label{deri}
\frac{d}{dt}\int_{\R^2} (iQu) \ov{u}=\<Q\T(u),u\>-\<Qu,\T(u)\>.%=\<\T(u),P^{\star}u\>-\<Pu,\T(u)\>.
\end{equation}
 By assumption and the symmetries of $\mathcal{E}$
 \begin{eqnarray*}
\<Q\T(u),u\>&=&2 \<\T(Qu,u,u),u\>-\<\T(u,u,Q^{\star}u),u\>\\
 &= &2\<Qu,\T(u)\>- \<\T(u),Q^{\star}u\>,
 \end{eqnarray*}
 which implies that $ \<\T(u),Q^{\star}u\>=\<Qu,\T(u)\>,$ and yields the result by \eqref{deri}.
\end{proof}

The relation which has just been established between operators commuting with $\mathcal{T}$, symmetries of $\mathcal{T}$ and $\mathcal{E}$, and conserved quantities of  \eqref{CR} is of course an instance of the Noether theorem. We recapitulate below the obtained results (with $\lambda\in \R$).
\medskip
\begin{center}
\begin{tabular}{| c | c | c | c |}
\hline 
operator $Q$ & conserved quantity & corresponding \\
commuting with $\mathcal{T}$ & $\int Q u \overline{u}$ & symmetry $u\mapsto e^{i\lambda Q} u$ \\[2pt]
\hline\vspace{1pt}
$1$ & $\int |u|^2$ & $u \mapsto e^{i\lambda} u$ \\[2pt]
\hline \vspace{1pt}
$x_1$ & $\int x_1 |u|^2$ &  $u \mapsto e^{i\lambda x_1} u$ \\[3pt]
\hline\vspace{1pt}
$x_2$ & $\int x_2 |u|^2$ &  $u \mapsto e^{i\lambda x_2} u$ \\[2pt]
\hline\vspace{1pt}
$|x|^2$ & $\int |x|^2 |u|^2$  & $u \mapsto e^{i\lambda \vert x\vert ^2} u$ \\[2pt]
\hline\vspace{1pt}
$i\partial_{x_1}$ & $\int i\partial_{x_1} u \overline{u}$  & $u \mapsto u(\cdot +\lambda e_1)$ \\[2pt]
\hline\vspace{1pt}
$i\partial_{x_2}$ & $\int  i\partial_{x_2} u \overline{u}$ & $u \mapsto u(\cdot + \lambda e_2)$ \\[2pt]
\hline\vspace{1pt}
$\Delta$ & $\int |\nabla u|^2$ & $u \mapsto e^{i \lambda \Delta} u$ \\[2pt]
\hline\vspace{1pt}
$H$ & $\int Hu \overline{u}$ & $u \mapsto e^{i \lambda H} u$\\[2pt]
\hline\vspace{1pt}
$L = i(x \times \nabla)$ & $\int L u \overline{u}$ & $u \mapsto u(R_\lambda x)$ \\[2pt]
\hline\vspace{1pt}
$i(x \cdot \nabla +1)$ & $\int i(x \cdot \nabla+1) u \overline{u}$ & 
$u \mapsto \lambda u( \lambda x)$  \\[2pt]
\hline
\end{tabular}
\end{center}

\section{Approximation of NLS with harmonic trapping by (CR)}

A first consequence of the findings of the previous section is the following theorem, which states that \eqref{CR} approximates the dynamics of \eqref{nls} in the small data regime. For $s\geq 0$, we define the Sobolev space based on the harmonic oscillator $\H^s=\big\{u\in L^{2},\;\;{\it s.t.}\;\; H^{s/2}u\in L^{2}\big\}$, endowed with the natural norm     $\Vert u\Vert_{\H^{s}}=\|H^{s/2}u\|_{L^{2}}$. By~\cite[Lemma~2.4]{YajimaZhang2}, we have the following equivalence of norms
             \begin{equation}\label{eq}
                 \Vert u\Vert_{\H^{s}}   \equiv        \Vert u\Vert_{H^{s}}+ \Vert   \<x\>^{s}u\Vert_{L^{2}}. 
  \end{equation}

\begin{theo} \label{approximation theorem}
Let $s>1$ and suppose that $u(t)$ is a solution of \eqref{nls} and $f(t)$ a solution of \eqref{CR} with the same initial data $u_0$. Assume that the following bound holds over an interval of time $[0,T]$
$$
\|f(t)\|_{\H^s} \leq B\qquad \text{for all } \;\;0\leq t\leq T.
$$
Then there exists a constant $C>1$ such that 
$$
\|u(t)-e^{itH} f(\frac{t}{\pi^2})\|_{\H^s(\R^2)}\leq C(B^3+B^5t) e^{CB^2t}\qquad \text{for all }\;\; 0\leq t\leq T.
$$
In particular, if $B$ is sufficiently small and $0\leq t \ll B^{-2}\log B^{-1}$, then 
$$
\|u(t)-e^{itH} f(\frac{t}{\pi^2})\|_{\H^s(\R^2)} \leq B^{5/2}.
$$
 \end{theo}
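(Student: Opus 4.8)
The plan is to pass to the interaction picture for both equations and compare. Write $v(t)=e^{-itH}u(t)$ for the profile of the \eqref{nls} solution and $g(t)=f(t/\pi^2)$ for the rescaled \eqref{CR} solution; since $e^{itH}$ commutes with $H^{s/2}$ it is an isometry of $\H^s$, so $\|u(t)-e^{itH}f(t/\pi^2)\|_{\H^s}=\|v(t)-g(t)\|_{\H^s}$ and the quantity to control is $w:=v-g$. A direct computation gives $i\partial_t v=\mathcal N(t)[v,v,v]$, where $\mathcal N(t)[a,b,c]:=e^{-itH}\big((e^{itH}a)(e^{itH}b)\overline{(e^{itH}c)}\big)$ is a $\pi$-periodic family of trilinear maps. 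Expanding $\mathcal N(t)$ in the eigenspaces of $H$ produces phases $e^{2it(n_1+n_2-n_3-n_4)}$, so its time average $\mathcal N_0:=\tfrac1\pi\int_0^\pi\mathcal N(t)\,dt$ keeps exactly the resonant terms $n_1+n_2=n_3+n_4$; by Lemma~\ref{lemproj} this means $\mathcal N_0=\tfrac1{\pi^2}\mathcal T$. Since $f$ solves \eqref{CR}, $g$ solves $i\partial_t g=\tfrac1{\pi^2}\mathcal T(g,g,g)=\mathcal N_0[g,g,g]$, i.e. exactly the resonant part of the $v$-equation, and subtracting,
\[
i\partial_t w=\big(\mathcal N_0[v,v,v]-\mathcal N_0[g,g,g]\big)+\big(\mathcal N(t)-\mathcal N_0\big)[v,v,v],
\]
a trilinear difference term plus a non-resonant remainder to be killed by a normal form.

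Two boundedness facts on $\H^s$ drive everything. First, since $s>1$ one has $H^s\hookrightarrow L^\infty$ in two dimensions, hence together with the norm equivalence \eqref{eq} the algebra estimate $\|abc\|_{\H^s}\lesssim\|a\|_{\H^s}\|b\|_{\H^s}\|c\|_{\H^s}$; combined with the $\H^s$-isometry property of $e^{\pm itH}$ this shows $\mathcal N(t)$, and therefore the average $\mathcal N_0=\pi^{-2}\mathcal T$, is bounded trilinear on $\H^s$, uniformly in $t$. Second — and this is the crux — the formal time-antiderivative of the non-resonant part,
\[
\mathcal B(t)[a,b,c]:=\sum_{k\neq0}\frac{e^{2ikt}}{2ik}\,\mathcal P_k[a,b,c],\qquad \mathcal P_k[a,b,c]:=\!\!\!\sum_{n_1+n_2-n_3-n_4=k}\!\!\!\Pi_{n_4}\big((\Pi_{n_1}a)(\Pi_{n_2}b)\overline{\Pi_{n_3}c}\big),
\]
is again bounded trilinear on $\H^s$, uniformly in $t$, and satisfies $\partial_t\mathcal B(t)=\mathcal N(t)-\mathcal N_0$ for frozen arguments. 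It loses no derivative because dividing the multiplier by the gap $k=n_1+n_2-n_3-n_4$ amounts to averaging $\mathcal N(\cdot)$ against a fixed bounded kernel: $\mathcal B(t)[a,b,c]=\tfrac1\pi\int_0^\pi\psi(t-\tau)\,\mathcal N(\tau)[a,b,c]\,d\tau$, with $\psi$ the bounded $\pi$-periodic sawtooth whose Fourier coefficients are $1/(2ik)$ for $k\neq0$ and $0$ for $k=0$, so $\|\mathcal B(t)\|\lesssim\|\psi\|_{L^\infty}\sup_\tau\|\mathcal N(\tau)\|<\infty$. One first runs the argument for Schwartz data, where the series converge classically, and extends to $\H^s$ by density and continuous dependence of both flows.

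The rest is a normal form plus Gronwall. Set $V:=v+i\mathcal B(t)[v,v,v]$; by the defining property of $\mathcal B$ the non-resonant term cancels and $\partial_t V=-i\mathcal N_0[v,v,v]+i\big(\mathcal B(t)[\partial_t v,v,v]+\mathcal B(t)[v,\partial_t v,v]+\mathcal B(t)[v,v,\partial_t v]\big)$, while $g$ is left as is. With $W:=V-g$ one has $\|w\|_{\H^s}\le\|W\|_{\H^s}+C\|v\|_{\H^s}^3$, and $W(0)=i\mathcal B(0)[u_0,u_0,u_0]$ since the data cancel, so $\|W(0)\|_{\H^s}\lesssim\|u_0\|_{\H^s}^3\le B^3$ (taking $t=0$ in the hypothesis). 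Using $\partial_t v=-i\mathcal N(t)[v,v,v]$, the boundedness of $\mathcal N,\mathcal N_0,\mathcal B$, and multilinearity of $\mathcal N_0[v,v,v]-\mathcal N_0[g,g,g]$ (expand $v=g+w$) one gets $\|\partial_t W\|_{\H^s}\lesssim\|w\|_{\H^s}\big(\|v\|_{\H^s}^2+\|g\|_{\H^s}^2\big)+\|v\|_{\H^s}^5$. Now a continuity argument: on the maximal subinterval of $[0,T]$ where $\|w\|_{\H^s}\le B$ one has $\|v\|_{\H^s}\le\|w\|_{\H^s}+\|g\|_{\H^s}\le2B$ (here $\|g(t)\|_{\H^s}=\|f(t/\pi^2)\|_{\H^s}\le B$ since $t/\pi^2\in[0,T]$), hence integrating and using $\|w\|\le\|W\|+C\|v\|^3$,
\[
\|w(t)\|_{\H^s}\le C\Big(B^3+B^5t+B^2\!\int_0^t\!\|w(s)\|_{\H^s}\,ds\Big),
\]
and Gronwall yields $\|w(t)\|_{\H^s}\le C(B^3+B^5t)e^{CB^2t}$ there; a standard bootstrap closes, so the subinterval is all of $[0,T]$ as long as this bound stays below $B$, which is the case in the relevant small-data/long-time regime. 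Finally, if $0\le t\ll B^{-2}\log B^{-1}$ then $CB^2t\le\tfrac12\log B^{-1}$ for $B$ small, so $e^{CB^2t}\le B^{-1/2}$ and $B^3+B^5t\lesssim B^3\log B^{-1}$, giving $\|w(t)\|_{\H^s}\le B^{5/2}$ for $B$ small enough.

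The only genuinely non-routine step is the uniform $\H^s$-boundedness of the time-antiderivative $\mathcal B(t)$, i.e. that passing to the normal form does not cost regularity; this is settled by realizing $\mathcal B(t)$ as a fixed bounded-kernel average of the $\H^s$-bounded family $\mathcal N(\tau)$, which in turn rests on the algebra property of $\H^s$ for $s>1$ and on the structure exhibited in Lemma~\ref{lem11} and Lemma~\ref{lemproj}. Everything else — deriving the profile equation, identifying $\mathcal N_0$ with $\pi^{-2}\mathcal T$, the cancellation of the non-resonant term, and the Gronwall/bootstrap — is standard.
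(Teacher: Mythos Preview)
Your proof is correct and follows essentially the same strategy as the paper: pass to the interaction picture, split the nonlinearity into resonant and non-resonant parts, bound a time-antiderivative of the non-resonant part without loss of regularity, and close with a bootstrap/Gronwall. The only cosmetic difference is how the crucial antiderivative bound is obtained: the paper writes $\frac{e^{it\omega}-1}{i\omega}=\int_{2\pi[t/2\pi]}^{t}e^{i\tau\omega}\,d\tau$ and recombines into $\int_{2\pi[t/2\pi]}^{t}\mathcal P_\tau(g,g,g)\,d\tau$, i.e.\ an integral of the bounded non-resonant operator over an interval of length $\le 2\pi$, whereas you realize $\mathcal B(t)$ as the convolution of $\mathcal N(\tau)$ with the bounded sawtooth kernel $\psi$. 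These are two equivalent packagings of the same observation that the phase denominators are bounded below by $1$ and that the antiderivative can be written as a finite-time average of the uniformly $\H^s$-bounded family $\mathcal N(\tau)$. Your sawtooth formulation is perhaps slightly more conceptual; the paper's choice has the minor convenience that its $A$ vanishes at $t=0$, so no $\mathcal B(0)$ term appears in the initial data for the corrected variable.
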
 
We remark that $B$ can be made small by taking the initial condition sufficiently small in $\H^s$.

\begin{proof}
We start with some notation: we write $\omega= n_1+n_2-n_3-n_4$ and 
\begin{align*}
\T^\prime(f_1,f_2,f_3)=&\sum_{\substack {n_1,n_2,n_3,n_4\geq 0\\\omega=0}}  \Pi_{n_4}\Big( (\Pi_{n_1}f_1)   (\Pi_{n_2}f_2) (\ov{ \Pi_{n_3}f_3}) \Big),\\
\mathcal N_t(f_1, f_2, f_3)=&\sum_{\substack {n_1,n_2,n_3,n_4\geq 0}}  e^{it\omega}\Pi_{n_4}\Big( (\Pi_{n_1}f_1)   (\Pi_{n_2}f_2) (\ov{ \Pi_{n_3}f_3}) \Big)=e^{-itH} (e^{itH} f_1\,e^{itH} f_2\, \overline{e^{itH} f_3}),\\
\mathcal P_t(f_1,f_2,f_3)=&\sum_{\substack {n_1,n_2,n_3,n_4\geq 0\\\omega\neq 0}}  e^{it\omega} \Pi_{n_4}\Big( (\Pi_{n_1}f_1)   (\Pi_{n_2}f_2) (\ov{ \Pi_{n_3}f_3}) \Big) = \mathcal N_t(f_1,f_2,f_3) - \T^\prime(f_1,f_2,f_3).
\end{align*}
Note that $\T'$ only differs from $\T$ by a constant multiplicative factor (see Lemma~\ref{lemproj}).
Recall that for $s>1$,  we have  $H^{s}(\R^{2}) \subset L^{\infty}(\R^{2})$, and that $H^{s}(\R^{2})$ is an algebra. Then  by \eqref{eq}, it is easy to check that $\H^{s}(\R^{2})$ is also an algebra when $s>1$. Therefore, using the boundedness of the operator $e^{itH}$ on $\H^s$ (uniformly in $t$), we obtain the boundedness of the trilinear operators $\T', \mathcal N$, and $\mathcal P$ from $\H^s\times \H^s \times \H^s$ to $\H^s$ for any $s>1$. 
\medskip

Let $g(t)= e^{-itH} u(t)$ and $\widetilde f(t)=f(\frac{t}{\pi^2})$. The equation satisfied by $g(t)$ and $\tilde f$ are the following:
$$
i\partial_t g(t)= \mathcal N_t(g(t),g(t),g(t)); \qquad i\partial_t \widetilde f=\T'(\widetilde f,\widetilde f,\widetilde f);\qquad g(0)=f(0)=u_0.
$$

Now we write the equation for $g(t)$ as follows:
\begin{align}
i\partial_t g=& \T^\prime(g,g,g)+ \sum_{\substack {n_1,n_2,n_3,n_4\geq 0\\\omega\neq 0}}  e^{it\omega}\Pi_{n_4}\Big( (\Pi_{n_1}g)   (\Pi_{n_2}g) (\ov{ \Pi_{n_3}g}) \Big)\nonumber \\
  \label{hl1} =&\T^\prime(g,g,g)+ \partial_t \sum_{\substack {n_1,n_2,n_3,n_4\geq 0\\\omega\neq 0}}  \frac{e^{it\omega}-1}{i\omega}\Pi_{n_4}\Big( (\Pi_{n_1}g)   (\Pi_{n_2}g) (\ov{ \Pi_{n_3}g}) \Big)\\
 \label{hl2} &\qquad -\sum_{\substack {n_1,n_2,n_3,n_4\geq 0\\\omega\neq 0}}  \frac{e^{it\omega}-1}{i\omega}\Pi_{n_4}\partial_t \Big( (\Pi_{n_1}g)   (\Pi_{n_2}g) (\ov{ \Pi_{n_3}g}) \Big)\\ 
 =& \T^\prime (g,g,g) +\partial_t A +D \nonumber,
\end{align}
where $A$ is the sum in \eqref{hl1} and $D$ is given in \eqref{hl2}. Now denote by $[t]$ the largest integer smaller than~$t$ and notice that 
\begin{align*}
&\sum_{\substack {n_1,n_2,n_3,n_4\geq 0\\\omega\neq 0}}  \frac{e^{it\omega}-1}{i\omega}\Pi_{n_4}\Big( (\Pi_{n_1}g)   (\Pi_{n_2}g) (\ov{ \Pi_{n_3}g}) \Big)=\sum_{\substack {n_1,n_2,n_3,n_4\geq 0\\\omega\neq 0}}  \int_{2\pi[\frac{t}{2\pi}]}^{t}e^{i\tau \omega}\Pi_{n_4}\Big( (\Pi_{n_1}g)   (\Pi_{n_2}g) (\ov{ \Pi_{n_3}g}) \Big)\, d\tau\\
&=\int_{2\pi[\frac{t}{2\pi}]}^{t} \mathcal P_\tau( g(t), g(t),g(t)) \, d\tau.
\end{align*}
This allows to estimate 
\begin{equation*} 
\begin{split}
\|A\|_{\H^s}\lesssim& \|g\|^{3}_{\H^s}, \\
\|D\|_{\H^s}\lesssim& \|\partial_t g\|_{\H^s}\|g\|_{\H^s}^2\lesssim \|g\|_{\H^s}^5.
\end{split}
\end{equation*}
Now let $e(t)= g(t)-\widetilde f(t)$, and assume as a bootstrap hypothesis that $\|e(t)\|_{\H^s}\leq B$. Then the equation satisfied by $e(t)$ can be written as
$$
\partial_t e= \T'(e, g,g )+\T'(\widetilde f, e, g) +\T'(\widetilde f,\widetilde f,e)+\partial_t A +D \nonumber.
$$ 
This gives that 
$$
\|e(t)\|_{\H^s} \leq C_1 B^2\int_0^t \| e(s)\|_{\H^s} ds +C_1(B^3+ t B^5).
$$
The result now follows by Gronwall's inequality.
\end{proof}

%%%%%%%%%%%%%%%%%%%%%%%%%%%%%%%%%%%%%%%%%%%%%%%%%%%%%%%%%%%%%%%%%%%%%%%%%%%%%%%%%%%%%%%%%%%%%%  

\section{Regularity and decay of stationary waves}

\label{sectionstationary}
Recall the following conservation laws
\begin{equation*}
M(u)=\int_{\R^2} |u|^2,\qquad P(u)= \int_{\R^2}Lu\, \ov{u}. 
\end{equation*}
As we mentioned in the introduction,   $M+\alpha P$-stationary waves read $R_{-\alpha \omega t} e^{-i\omega t} \phi$, where $\alpha$ and $\omega$ are real numbers, and $\phi$ is a fixed function. In particular, $M$-stationary waves are simply of the type $e^{-i\omega t} \phi$.
Notice that, in degenerate cases (if $\phi$ is an eigenfunction of $L$), a given solution can be an $M+\alpha P$-stationary wave for all\footnote{It would be natural to define $P$-stationary waves as waves of the type $R_{-\alpha \omega t} \phi$. However, we could not find an example of such a wave which is not at the same time an $M$-stationary wave. We conjecture any $P$-stationary wave is also an $M$-stationary wave.} $\alpha$.
Variationally, $M+\alpha P$-stationary waves can be characterized as critical points of $\mathcal{E}$ under the constraint that $M+\alpha P$ takes a fixed value. The Euler-Lagrange equation reads
\begin{equation}
\label{EL}
\omega \phi + \alpha \omega L \phi = \mathcal{T}(\phi,\phi,\phi).
\end{equation}

In the following subsections, we prove polynomial or exponential decay, in the physical space or Fourier variable, for large classes of such stationary waves, requiring that they belong to $L^2$, or slightly more.

On the one hand, the condition of belonging to $L^2$ is sharp in order to obtain decay or regularity. This can be seen through the example of $\frac{1}{|x|}$, which is a stationary wave in the weak-$L^2$ space $L^{2,\infty}$, but not in any Sobolev or weighted $L^2$ space of positive index. On the other hand, all the examples we know of $M+\alpha P$ stationary waves are products of Gaussians and polynomials, suggesting a possible improvement of our results.

\subsection{Polynomial decay}
\begin{theo} \label{theo31} Assume that $\alpha \in \mathbb{R}$, with $\alpha = 0$ or $\alpha^{-1} \notin \mathbb{Z}$, and let $\phi$ be an $M + \alpha P$ stationary wave. Then
\begin{enumerate}[(i)]
\item If $\phi$ belongs to $L^{2,\epsilon}$, for some $\epsilon>0$, it belongs to $L^{2,s}$ for any $s>0$. 
\item If $\phi$ belongs to $H^{\epsilon}$, for some $\epsilon>0$, it belongs to $H^s$ for any $s>0$.
\item If $\phi$ belongs to $L^{2,\epsilon} \cap H^{\epsilon}$ for some $\epsilon>0$, it belongs to the Schwartz class.
\end{enumerate}
\end{theo}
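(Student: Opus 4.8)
The strategy is a bootstrap argument exploiting the smoothing hidden in the Strichartz-type representation \eqref{eq3} of $\mathcal{T}$, combined with the commutation relations of Lemma~\ref{lemconj}. The starting point is the Euler--Lagrange equation \eqref{EL}: $\omega\phi + \alpha\omega L\phi = \mathcal{T}(\phi,\phi,\phi)$. The key analytic input is that $\mathcal{T}$ maps $L^2\times L^2\times L^2$ to $L^2$ and, more importantly, gains weight and derivatives in a controlled way; one extracts this from the integral formula \eqref{eq3}, $\mathcal{T}(\phi,\phi,\phi) = 2\pi\int_{-\pi/4}^{\pi/4} e^{itH}\big[(e^{-itH}\phi)(e^{-itH}\phi)(\overline{e^{-itH}\phi})\big]\,dt$, together with the $L^4_{t,x}$ Strichartz estimate for $e^{-itH}$ on the finite time interval $[-\pi/4,\pi/4]$. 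Since $e^{\pm itH}$ commutes with $H$, and since (by Lemma~\ref{lemconj}) $H\mathcal{T}(\phi,\phi,\phi) = 2\mathcal{T}(H\phi,\phi,\phi) - \mathcal{T}(\phi,\phi,H\phi)$, one can propagate regularity measured by powers of $H$, i.e. in the scale $\H^s$, which by \eqref{eq} simultaneously controls Sobolev regularity and weighted decay.

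First I would reduce all three assertions to a single statement about the $\H^s$-scale: by \eqref{eq}, $\|u\|_{\H^s}\approx \|u\|_{H^s}+\|\langle x\rangle^s u\|_{L^2}$, so it suffices to show that if $\phi\in\H^\epsilon$ for some $\epsilon>0$ then $\phi\in\H^s$ for all $s>0$, and then (iii) — Schwartz class — follows, while (i) and (ii) follow from (iii) once one checks that the hypothesis ``$\phi\in L^{2,\epsilon}$ alone'' (resp. ``$H^\epsilon$ alone'') already forces $\phi\in\H^{\epsilon'}$ for some $\epsilon'>0$. That last reduction is where the equation \eqref{EL} must be used to trade, say, weighted decay for Sobolev regularity: rewriting $\phi = \frac{1}{\omega}(1+\alpha L)^{-1}\mathcal{T}(\phi,\phi,\phi)$, one uses that $(1+\alpha L)^{-1}$ is bounded on $L^2$ precisely under the hypothesis $\alpha=0$ or $\alpha^{-1}\notin\mathbb{Z}$ (the spectrum of $L$ on the special Hermite basis is $\mathbb{Z}$, so $1+\alpha m$ never vanishes and $|1+\alpha m|\gtrsim 1$), and that $\mathcal{T}$ applied to three $L^2$ functions with one of them in $\H^\epsilon$ lands in $\H^\epsilon$ as well (using $H\mathcal{T} = 2\mathcal{T}(H\cdot,\cdot,\cdot) - \mathcal{T}(\cdot,\cdot,H\cdot)$ and fractional powers thereof, together with the $L^4$ Strichartz bound to absorb the remaining two factors).

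The core inductive step is then: assuming $\phi\in\H^\sigma$ for some $\sigma\ge\epsilon$, show $\phi\in\H^{\sigma+\delta}$ for a fixed gain $\delta>0$ independent of $\sigma$. Apply $H^{(\sigma+\delta)/2}$ to \eqref{EL}. On the left one gets $\omega(1+\alpha L)H^{(\sigma+\delta)/2}\phi$ (using $[H,L]=0$), and since $(1+\alpha L)$ is invertible with bounded inverse, controlling the right side in $L^2$ controls $\|\phi\|_{\H^{\sigma+\delta}}$. On the right, distribute $H^{(\sigma+\delta)/2}$ over $\mathcal{T}$ via the Leibniz-type commutation: morally $H^{s/2}\mathcal{T}(\phi,\phi,\phi)$ is a sum of terms $\mathcal{T}(H^{a}\phi, H^{b}\phi, H^{c}\phi)$ with $a+b+c = s/2$; placing the highest power on one factor (in $\H^\sigma$, which by induction is under control once $\sigma+\delta-\sigma = \delta$ is small relative to... ) — here one needs that $\mathcal{T}(\psi_1,\psi_2,\psi_3)\in\H^{\min_i r_i - C}$ when $\psi_i\in\H^{r_i}$, which comes from writing \eqref{eq3}, commuting $e^{-itH}$ through, and applying a product estimate: $\|fgh\|_{H^{s}}\lesssim \|f\|_{H^s}\|g\|_{L^4}\|h\|_{L^4}\cdots$ plus the $L^4$ Strichartz estimate $\|e^{-itH}\psi\|_{L^4_t L^4_x([-\pi/4,\pi/4]\times\R^2)}\lesssim\|\psi\|_{L^2}$. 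Iterating raises $\sigma$ past any threshold; combined with the base reduction this yields $\phi\in\bigcap_{s} \H^s = \mathcal{S}(\R^2)$.

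The main obstacle I anticipate is making the ``Leibniz rule for $\mathcal{T}$ in the $\H^s$ scale'' precise and quantitative — i.e. proving the smoothing/product estimate $\mathcal{T}:\H^{r_1}\times\H^{r_2}\times\H^{r_3}\to\H^{r}$ with $r$ as large as possible (ideally $r=\min r_i$ or better), uniformly. This requires carefully combining (a) the finite-time $L^4_{t,x}$ Strichartz estimate for the Hermite flow (which is where the ``hidden smoothing'' lives, as the paper notes after Lemma~\ref{lem11}), (b) fractional Leibniz / Kato--Ponce type inequalities adapted to $H$ rather than $-\Delta$, and (c) the commutation $[H,L]=0$ and the commutators of $H$ with the pointwise product. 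A secondary subtlety is the base case: showing that membership in $L^{2,\epsilon}$ \emph{alone} (no Sobolev hypothesis) bootstraps into $\H^{\epsilon'}$ — this genuinely uses that $\mathcal{T}$ produces output with both Sobolev and weighted regularity simultaneously (via the Fourier-invariance of Lemma~\ref{FourierSym}, which swaps the roles of $x$ and $\xi$), so that one free unit of weighted decay, fed through $\mathcal{T}$, returns a little of both. Handling the degenerate resolvent bound when $\alpha^{-1}\in\mathbb{Z}$ is excluded by hypothesis, so that case need not be treated.
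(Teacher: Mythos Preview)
Your bootstrap scheme has a genuine gap: the Strichartz + Leibniz machinery you describe \emph{preserves} $\H^s$-regularity but does not \emph{gain} it, and without a gain the induction does not close. Concretely, the commutation $H\mathcal{T}(\phi,\phi,\phi) = 2\mathcal{T}(H\phi,\phi,\phi) - \mathcal{T}(\phi,\phi,H\phi)$ together with the $L^4_{t,x}$ Strichartz bound yields at best $\mathcal{T}:(\H^s)^3 \to \H^s$; but to pass from $\phi\in\H^\sigma$ to $\phi\in\H^{\sigma+\delta}$ you would need $\mathcal{T}:(\H^\sigma)^3 \to \H^{\sigma+\delta}$ for some $\delta>0$, and nothing in your outline produces that extra $\delta$. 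You yourself write ``ideally $r = \min r_i$ or better'', but $\min r_i$ is exactly preservation, and you give no mechanism for ``better''. Note too that the $L^2$-scaling invariance of $\mathcal{T}$ rules out any estimate of the form $\mathcal{T}:(L^2)^3 \to L^{2,\delta}$ with $\delta>0$, so the starting hypothesis $\epsilon>0$ must enter the gain quantitatively --- something your Strichartz scheme does not see.

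The paper obtains the gain by a different route: it works purely in the weighted scale $L^{2,\sigma}$ (not the combined $\H^s$ scale) and proves directly that $\mathcal{T}:(L^{2,\sigma})^3 \to L^{2,\sigma+\delta}$ whenever $0<\delta<\frac{\sigma}{\sigma+1}$. The proof is a pointwise analysis of the kernel in the physical-space formula~\eqref{lim}, exploiting the rectangle identity $|z|^2+|z+x+\lambda x^\perp|^2 = |z+x|^2+|z+\lambda x^\perp|^2$: when the output point $z$ is large, at least two of the three input arguments are forced to be large as well, so the weights on the inputs overcompensate the weight on the output except on a thin region in $\lambda$, whose contribution is controlled by an operator bound on $\lambda$-truncated pieces of $\mathcal{T}$. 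Iterating this proposition through~\eqref{EL}, together with the boundedness of $(1+\alpha L)^{-1}$ on $L^{2,s}$ (which uses only that $L$ commutes with radial weights and the spectral hypothesis on $\alpha$), gives (i). Part (ii) then follows immediately from Fourier invariance of the Euler--Lagrange equation (Lemma~\ref{FourierSym} and $\mathcal{F}L = L\mathcal{F}$), with no need for your ``$L^{2,\epsilon}$ alone forces $\H^{\epsilon'}$'' base-case reduction; (iii) is just (i) and (ii) combined.
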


\begin{proof} By invariance of the Euler-Lagrange equation~\eqref{EL} under the Fourier transform, it suffices to prove $(i)$. Using the fact that $(\operatorname{Id} + \alpha L)^{-1}$ is bounded on $L^{2,s}$ - since $L$ commutes with radial weights - it will follow from the repeated application of the following proposition to~\eqref{EL}. \end{proof}

\begin{prop} For $\sigma>0$, and $\delta<\frac{\sigma}{\sigma +1}$, the operator $\mathcal{T}$ is bounded from $(L^{2,\sigma})^3$ to $L^{2,\sigma+\delta}$.
\end{prop}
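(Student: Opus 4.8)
The plan is to exploit the Strichartz-type representation of $\mathcal{T}$ from Lemma~\ref{lem11}, namely
$$
\mathcal{T}(f_1,f_2,f_3)=2\pi\int_{-\pi/4}^{\pi/4}e^{it H}\big[(e^{-itH}f_1)(e^{-itH}f_2)(\overline{e^{-itH}f_3})\big]\,dt,
$$
and to track the weight $\langle x\rangle^{\sigma}$ through this formula. Since $L$ commutes with radial weights (already used in the proof of Theorem~\ref{theo31}) and $e^{-itH}$ is bounded on every $L^{2,s}$ and every $H^s$ (equivalently on $\H^s$, by \eqref{eq}), the key point is to understand how multiplication by a weight interacts with the trilinear pointwise product inside the time integral. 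The natural route is to write $\langle x\rangle^{\sigma+\delta}\mathcal{T}(f_1,f_2,f_3)$, commute $e^{itH}$ past the weight at the cost of a weight that has been conjugated by the harmonic flow, and distribute that conjugated weight among the three factors $e^{-itH}f_j$. Because $e^{-itH}$ is a metaplectic transformation (up to the lens transform it is essentially $e^{-it\Delta}$ composed with a Gaussian multiplier and a rescaling), conjugating $\langle x\rangle^{2\mu}$ by $e^{itH}$ for $t\in(-\pi/4,\pi/4)$ produces an operator comparable to $\langle x\rangle^{2a(t)}\langle D\rangle^{2b(t)}$ with $a(t),b(t)\ge 0$ and $a(t)+b(t)\le \mu$; equivalently it maps $\H^{2\mu}\to L^2$ boundedly, uniformly on compact subsets of $(-\pi/4,\pi/4)$.

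The main steps, in order, would be: (1) By the substitution explained in Section~\ref{known results} it suffices to treat the case $f_1=f_2=f_3=f$, and by the Fourier invariance \eqref{fourier} one may freely trade weights for derivatives; fix a target exponent $\sigma+\delta$ with $\delta<\sigma/(\sigma+1)$. (2) Insert the representation \eqref{eq3} and estimate
$$
\|\langle x\rangle^{\sigma+\delta}\mathcal{T}(f,f,f)\|_{L^2}\le 2\pi\int_{-\pi/4}^{\pi/4}\big\|\langle x\rangle^{\sigma+\delta}e^{itH}\big[(e^{-itH}f)^2\,\overline{e^{-itH}f}\big]\big\|_{L^2}\,dt.
$$
(3) For fixed $t$, commute the weight inside: $\langle x\rangle^{\sigma+\delta}e^{itH}=e^{itH}W_t$ where $W_t$ is comparable to $\langle x\rangle^{a}\langle D\rangle^{b}$ with $a+b=\sigma+\delta$ and, crucially, one can choose the split so that $a<\sigma$ is as close to $\sigma$ as we like while $b=\sigma+\delta-a$ is small; the constraint $\delta<\sigma/(\sigma+1)$ is exactly what guarantees $b$ can be taken strictly less than $a/\sigma\cdot$(the gap), i.e.\ that the loss is affordable. (4) Apply $W_t$ to the product and use that $\H^\sigma$-type spaces with $\sigma>0$ behave well under products of three factors after distributing a total of $b$ derivatives (Leibniz/Kato–Ponce on $\langle D\rangle^{b}$ for $0<b<1$, combined with $L^\infty$ control of the other factors via $L^{2,\sigma}\hookrightarrow$ enough, or via Sobolev embedding after trading weight for smoothness using \eqref{eq}) to bound the result by $\|f\|_{L^{2,\sigma}}^3$, with a constant integrable in $t$ over $(-\pi/4,\pi/4)$. (5) Integrate in $t$.

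The hard part will be Step~(3)–(4): making precise the claim that conjugating a homogeneous weight $\langle x\rangle^{\sigma+\delta}$ by $e^{itH}$ over the \emph{full} interval $(-\pi/4,\pi/4)$ costs only a controlled, integrable-in-$t$ amount of derivatives, and that the residual fractional derivative $\langle D\rangle^{b}$ with $b$ slightly positive can be absorbed by the trilinear product in the weighted space — this is where the precise threshold $\delta<\sigma/(\sigma+1)$ must emerge rather than something weaker. The cleanest implementation is probably to avoid pointwise-in-$t$ weight conjugation and instead bound $\langle x\rangle^{\sigma+\delta}$ by interpolating between $\langle x\rangle^{\sigma}$ (handled by the $(L^{2,\sigma})^3\to L^{2,\sigma}$ boundedness coming from Lemma~\ref{lemconj}, since $x$ and $|x|^2$ commute with $\mathcal{T}$ in the sense of \eqref{commute}) and a small amount of extra decay extracted from the oscillatory integral in $t$ by a non-stationary-phase / integration-by-parts argument near the endpoints $t=\pm\pi/4$ where $e^{-itH}$ degenerates; the endpoint blow-up of the lens transform is integrable with room to spare, and that slack is precisely the source of the gain $\delta>0$. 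Either way, I expect the bookkeeping of how much weight can be converted to decay versus how much must be spent as smoothness to be the crux, with everything else being boundedness of metaplectic operators and an application of Gronwall-free, purely linear estimates.
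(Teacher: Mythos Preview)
Your plan has a genuine gap: the mechanism by which you hope to \emph{gain} the extra weight $\langle x\rangle^{\delta}$ is never made concrete, and the route you sketch does not produce it.

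First, Step~(2) already loses the game. Putting the $L^2$-norm inside the time integral forces a pointwise-in-$t$ bound on $\|\langle x\rangle^{\sigma+\delta}e^{itH}[\cdots]\|_{L^2}$, but near $t=\pm\pi/4$ the flow $e^{itH}$ is essentially the Fourier transform, so this becomes $\|\langle D\rangle^{\sigma+\delta}[\cdots]\|_{L^2}$. The inner factor is a product of three functions which, at those times, lie in $H^{\sigma}$ (since $f_j\in L^{2,\sigma}$); in two dimensions with $\sigma\le 1$ that product is not even in $H^{\sigma}$, let alone $H^{\sigma+\delta}$. The ``integrable blow-up near the endpoints'' you invoke is already spent in proving the baseline $(L^2)^3\to L^2$ bound and provides no additional decay. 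Relatedly, the conjugation claim in Step~(3) is not right: $e^{-itH}\langle x\rangle^{2}e^{itH}$ is $\cos^2(2t)|x|^2+\sin^2(2t)|D|^2+\text{cross terms}$, so the split between weight and derivatives is fixed by $t$, not at your disposal, and degenerates to pure derivatives at the endpoints. Finally, in Step~(4) you repeatedly ``trade weight for smoothness via \eqref{eq}'', but \eqref{eq} says $\H^s=H^s\cap L^{2,s}$; it does not let you convert $L^{2,\sigma}$ membership into Sobolev regularity.

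The paper's proof is entirely different and purely physical-space. It works by duality with the integral formula \eqref{lim}, studies the kernel
$K(x,z,\lambda)=\langle z\rangle^{\sigma+\delta}\langle z+x\rangle^{-\sigma}\langle z+\lambda x^\perp\rangle^{-\sigma}\langle z+x+\lambda x^\perp\rangle^{-\sigma}$,
and uses the rectangle identity $|z|^2+|z+x+\lambda x^\perp|^2=|z+x|^2+|z+\lambda x^\perp|^2$ to show that $K\lesssim 1$ except on a region where $|z|\approx 2^{j}$ is large, $\langle z+x+\lambda x^\perp\rangle\approx 2^{k}\ll 2^{j}$, and $|\lambda|$ is forced to be very small. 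On that bad region one applies a refined operator bound (from \cite[Proposition~7.7]{FGH}) saying that restricting $\lambda$ to an interval of length $\ell$ gives $\|\mathcal{T}_{|\lambda|\le\ell}\|_{(L^2)^3\to L^2}\lesssim \ell$; the gain $\delta$ comes from this smallness, and the dyadic sum converges precisely when $\delta<\sigma/(\sigma+1)$. This geometric source of smoothing---the rectangle constraint forcing $\lambda$ to be small when the output point is far and one input point is near---is absent from your outline and is the missing idea.
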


\begin{rema} This proposition also implies a very strong smoothing effect for the equation under study.
\end{rema}

\begin{proof} 
By duality, it suffices to prove that, for $f$ and $g$ in $L^2$, with norm 1, which are fixed from now on, and which we assume to be non negative,
$$
\langle \mathcal{T}( \langle x \rangle^{-\sigma} f\,,\, \langle x \rangle^{-\sigma} f\,,\, \langle x \rangle^{-\sigma} f )\,,\,\langle x \rangle^{\sigma+\delta} g \rangle \lesssim 1,
$$
or in other words
$$
\int_{-1}^1 \int_{\mathbb{R}^2} \int_{\mathbb{R}^2} K(x,z,\lambda) f(z+x) f(z + \lambda x^\perp) f(z+x+\lambda x^\perp) g(z) \,dx\,dz\,d\lambda \lesssim 1,
$$ 
where
$$
K(x,z,\lambda) = \frac{\langle z \rangle^{\sigma +\delta}}{\langle z+x \rangle^\sigma \langle z+\lambda x^\perp \rangle^\sigma \langle z+x+\lambda x^\perp \rangle^\sigma}.
$$

\bigskip
\noindent
\underline{Step 1: bounds on $K$.} Localizing dyadically $z$ and $z+x+\lambda x^\perp$, let us assume from now on that $\langle z \rangle \approx 2^j$ and $ \langle z+x+\lambda x^\perp \rangle \approx 2^k$, where $j$ and $k$ are integers.

First, it is clear that $K \lesssim 1$ unless $|z|$ is large. The identity
$$
|z|^2 + |z+x+\lambda x^\perp|^2 = |z+x|^2 + |z+\lambda x^\perp|^2
$$
implies that $\min(|z+x|, |z+\lambda x^\perp|) \geq |z|$. If in addition $|z+x+\lambda x^\perp| \gtrsim |z|$, the assumption $\delta<\sigma$ entails that $K\lesssim 1$.

Thus, $K\lesssim 1$ unless $2^j$ is large and $2^k < \epsilon_0 2^j$, for $\epsilon_0>0$ chosen sufficiently small. Let us assume for now that these two conditions hold. 

There holds then $|z+\lambda x^\perp| \approx |z|$. Indeed, $|z+\lambda x^\perp|\ll |z|$ would imply that $|\lambda x| \approx |z|$, and then $|z+x+\lambda x^\perp| \geq |x| - |z+\lambda x^\perp| \gtrsim |z|$ (since $|\lambda|<1$), which contradicts $2^k < \epsilon_0 2^j$.

Next, if $|\lambda x^\perp| > 2 \epsilon_0 2^j$, then $|z+x| \geq |\lambda x^\perp| - |z+x+\lambda x^\perp| > \epsilon_0 2^j$. Combined with $|z+\lambda x^\perp| \approx |z|$, and, once again $\delta<\sigma$, this implies $K \lesssim 1$.

Therefore, $K \lesssim 1$ unless $2^j$ large, $2^k < \epsilon_0 2^j$, and $|\lambda| |x| < 2\eps_0 2^j$. Under these three conditions,
$$
|z| \approx 2^j, \quad |x|\approx 2^j,\quad |z+\lambda x^\perp| \approx 2^j, \quad \langle z+x+\lambda x^\perp \rangle \approx 2^k < \epsilon_0 2^j.
$$
One can then estimate
$$
|z+x| \geq |\lambda| |x| - |z+x+\lambda x^\perp| \gtrsim |\lambda| 2^j \quad \mbox{provided $|\lambda| 2^j > C_0 2^k$, for a constant $C_0$}.
$$
When the previous estimate is valid, one finds by a straightforward computation
$$
|K| \lesssim \frac{2^{j(\sigma + \delta)}}{(|\lambda| 2^j)^\sigma 2^{j \sigma} 2^{k \sigma}} \lesssim 2^{j(\delta - \sigma)} |\lambda|^{-\sigma} 2^{-k\sigma},
$$
and this last bound is $O(1)$ if $|\lambda| > 2^{-(k+j)} 2^{j\frac{\delta}{\sigma}}$.

Summarizing: $K \lesssim 1$ unless $2^j$ large, $2^k < \epsilon_0 2^j$, and $|\lambda| < \max(C_0 2^{k-j},  2^{-(k+j)} 2^{j\frac{\delta}{\sigma}})$. If on the other hand these three conditions are satisfied, we use the trivial bound
\begin{equation}
\label{boundK}
|K| \lesssim \frac{2^{j(\sigma + \delta)}}{2^{j \sigma} 2^{k \sigma}} \lesssim 2^{j\delta} 2^{-k\sigma}.
\end{equation}

\medskip
\noindent
\underline{Step 2: Decomposition of $\mathcal{T}$.} Define 
\begin{align*}
&\chi_{j,k}(x,z,\lambda) = \mathbbm{1}_{2^j<\langle z \rangle<2^{j+1}} \mathbbm{1}_{2^k< \langle z+x+\lambda x^\perp \rangle<2^{k+1}} \mathbbm{1}_{\lambda < \max(C_0 2^{k-j},  2^{-(k+j)} 2^{j\frac{\delta}{\sigma}})}, \\
& \chi(x,z,\lambda) = \sum_{2^k < \epsilon_0 2^j} \chi_{j,k}(x,z,\lambda),
\end{align*}
and decompose
\begin{equation*}
\begin{split}
& \iiint K(x,z,\lambda) f(z+x) f(z + \lambda x^\perp) f(z+x+\lambda x^\perp) g(z) \,dx\,dz\,d\lambda \\
& \qquad = \iiint  \dots \chi(x,z,\lambda)  \,dx\,dz\,d\lambda +  \iiint  \dots (1-\chi(x,z,\lambda))  \,dx\,dz\,d\lambda \overset{def}{=} I + II.
\end{split}
\end{equation*}
Using that $|K| \lesssim 1$ on the support of the integrand of $II$, we obtain immediately, by boundedness of\;$\mathcal{T}$ from $(L^2)^3$ to $L^2$,
$$
II \lesssim \big| \langle \mathcal{T}(f,f,f),g \rangle\big| \lesssim 1.
$$
As for $I$, the bound~\eqref{boundK} gives
$$
I \lesssim \sum_{2^k < \epsilon_0 2^j} 2^{j\delta} 2^{-k\sigma}\big| \langle \mathcal{T}_{j,k}(f,f,f),g \rangle\big|,
$$
where
$$
\mathcal{T}_{j,k}(f,g,h)(z) = \int_{-1}^1 \int_{\mathbb{R}^2} f(z+x) g(z + \lambda x^\perp) h(z+x+\lambda x^\perp) \chi_{j,k}(x,z,\lambda) \,dx \,d\lambda.
$$
By  \cite[Proposition 7.7]{FGH},  the operator $\mathcal{T}_{j,k}  : (L^2)^3 \to L^2$ has an operator  norm such that $\Vert  \mathcal{T}_{j,k}  \Vert_{ (L^2)^3 \to L^2} \leq  \max(C_0 2^{k-j},  2^{-(k+j)} 2^{j\frac{\delta}{\sigma}})$. This implies that
$$
I \lesssim  \sum_{2^k < \epsilon_0 2^j} 2^{j\delta} 2^{-k\sigma} \max(C_0 2^{k-j},  2^{-(k+j)} 2^{j\frac{\delta}{\sigma}}) \lesssim 1
$$
(since $\delta < \frac{\sigma}{\sigma +1}$), which concludes the proof.
\end{proof}

\subsection{Exponential decay}

The purpose of this section is to prove analyticity and exponential decay of $M-$stationary solutions of~\eqref{CR}. Indeed, we will show the following:

\begin{theo}\label{analyticity}
 Any $L^2$ solution of the equation
\begin{equation}
\label{EL0}
\omega \phi = \mathcal{T}(\phi,\phi,\phi).
\end{equation}
satisfies

$$
e^{\mu |\cdot|} \phi, \;e^{\mu |\cdot|} \widehat \phi \in L^\infty
$$
for some $\mu>0$.
\end{theo}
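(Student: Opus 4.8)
\emph{Reductions.} Pairing \eqref{EL0} with $\phi$ and using Lemma~\ref{lem11}, $\omega\|\phi\|_{L^2}^2=\mathcal E(\phi)=2\pi\|\e^{it\Delta}\phi\|_{L^4_{t,x}}^4\geq 0$, with equality only for $\phi\equiv 0$; so we may assume $\phi\not\equiv 0$ and $\omega>0$. By Lemma~\ref{FourierSym}, $\widehat\phi$ again solves \eqref{EL0}, and since $H$ commutes with $\mathcal F$, and since for $\mu>0$ the condition $\e^{\mu\sqrt H}\phi\in L^2$ is equivalent, by the standard description of Hermite expansions, to $\phi$ being analytic in a strip and exponentially localized (that is, $\e^{c|x|}\phi,\ \e^{c|x|}\widehat\phi\in L^\infty$ for some $c>0$), it suffices to prove $\e^{\mu\sqrt H}\phi\in L^2$ for some $\mu>0$; equivalently one works on the physical side with the weight $\e^{\mu|x|}$ and invokes Lemma~\ref{FourierSym} at the end.

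\emph{Key weighted estimate.} The representation \eqref{lim} has a nonnegative kernel, and the rectangle identity $|z|^2+|z+x+\lambda x^\perp|^2=|z+x|^2+|z+\lambda x^\perp|^2$ gives $|z|\leq|z+x|+|z+\lambda x^\perp|$ and $\langle z\rangle\lesssim\langle z+x\rangle\langle z+\lambda x^\perp\rangle$, hence $w(z)\lesssim w(z+x)\,w(z+\lambda x^\perp)$ for $w=\e^{\mu|x|}\langle x\rangle^\kappa$ and for its truncations $\e^{\mu\min(|x|,M)}\langle x\rangle^\kappa$ (which have a priori finite norms on $\phi$). Therefore $w\,|\mathcal T(f_1,f_2,f_3)|\lesssim\mathcal T(w|f_1|,w|f_2|,|f_3|)$ pointwise, and, combining with the $L^2\to L^2$ bound on $\mathcal T$ (Lemma~\ref{lem11}) and its symmetry in the first two arguments (Lemma~\ref{lemproj}),
$$\|w\,\mathcal T(f_1,f_2,f_3)\|_{L^2}\lesssim\|w f_1\|_{L^2}\,\|w f_2\|_{L^2}\,\|f_3\|_{L^2},$$
the weight being placeable on any two slots and the constants uniform in $\mu\geq 0$ (and $M$). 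On the Hermite side this is $\sqrt{\lambda_{n_1+n_2-n_3}}\leq\sqrt{\lambda_{n_1}}+\sqrt{\lambda_{n_2}}$, which is why one may work with $\e^{\mu\sqrt H}$; the same device turns the smoothing proposition into $\|\e^{\mu|x|}\mathcal T(f,f,f)\|_{L^{2,\sigma+\delta}}\lesssim\|\e^{\mu|x|}f\|_{L^{2,\sigma}}^3$ for $\delta<\sigma/(\sigma+1)$, uniformly in $\mu$, since the extra factor $\e^{\mu(|z|-|z+x|-|z+\lambda x^\perp|-|z+x+\lambda x^\perp|)}\leq 1$ only improves the kernel bound used there.

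\emph{Bootstrap.} Fed into \eqref{EL0}, the unweighted smoothing proposition propagates finiteness, $\phi\in\H^\sigma\Rightarrow\phi\in\H^{\sigma+\delta}$, and since the admissible increments sum to $\infty$ this yields $\phi\in\bigcap_s\H^s=\mathcal S$ — provided one has a foothold $\phi\in\H^\epsilon$ for some $\epsilon>0$. With $\phi\in\mathcal S$ in hand one passes to the exponential weight. Here the weighted estimates above are, by themselves, gain-free in the rate: \eqref{EL0} only gives $\|\e^{\mu|x|}\phi\|_{L^2}\lesssim\omega^{-1}\|\phi\|_{L^2}\|\e^{\mu|x|}\phi\|_{L^2}^2$ (a quadratic inequality that forbids the left side from being small but not from being infinite), and $\mathcal T$ cannot transfer a weight $\e^{\mu|x|}$ on its inputs to a larger weight on its output. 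The exponential localization must therefore be extracted by an additional argument: either by estimating the Hermite coefficients $\|\Pi_n\phi\|_{L^2}$ directly in the recursion coming from Lemma~\ref{lemproj} and Corollary~\ref{loriot}, using the precise decay in $n_1,\dots,n_4$ of the coupling coefficients $\mathcal E(\phi_{n_1,m_1},\dots,\phi_{n_4,m_4})$; or by a complex deformation of the semigroup representation \eqref{eq3} of Lemma~\ref{lem11}, exploiting that the time interval $[-\tfrac\pi4,\tfrac\pi4]$ is compact so that a continuity argument in $\mu$ can be run on a nondegenerate interval. Once $\e^{\mu|x|}\phi\in L^2$ for some (small, $\phi$-dependent) $\mu>0$, one lets $M\to\infty$, upgrades $L^2$ to $L^\infty$ at the cost of an arbitrarily small loss in rate, and invokes Lemma~\ref{FourierSym} for the statement on $\widehat\phi$.

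\emph{Main obstacle.} Everything after one has a little extra decay or regularity is a soft bootstrap; the real work is (i) producing the first foothold $\phi\in\H^\epsilon$ from $\phi\in L^2$ alone, and (ii) producing the first \emph{exponential} rate $\mu>0$, which the monotone weighted trilinear bound cannot supply on its own and which requires either the fine structure of the Hermite coupling coefficients or the analytic-smoothing properties of $\e^{-itH}$ over the quarter period.
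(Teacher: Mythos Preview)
Your write-up is candid about its own gap, but that gap is the theorem. The weighted subadditivity you record, $|z|\leq|z+x|+|z+\lambda x^\perp|$ (hence $e^{\mu|z|}\leq e^{\mu|z+x|}e^{\mu|z+\lambda x^\perp|}$), is correct and is indeed one half of the paper's Proposition~\ref{weighted est}; but as you yourself note, fed into~\eqref{EL0} it only yields a scale-invariant inequality $\omega\|e^{\mu|x|}\phi\|_{L^2}\lesssim\|\phi\|_{L^2}\|e^{\mu|x|}\phi\|_{L^2}^2$ with no small parameter, so no bootstrap closes. You then defer the actual argument to ``either Hermite coupling asymptotics or complex deformation of $e^{-itH}$,'' neither of which you carry out --- and neither of which is what is needed.

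The missing ingredient is a \emph{gain in the high--low interaction}, and the paper extracts it from the bilinear refinement of Strichartz: if one input is supported in $B(0,M)$ and another outside $B(0,N)$ with $M\ll N$, then
\[
|\mathcal E_{\mu,\epsilon}(f_1,f_2,f_3,f_4)|\lesssim (M/N)^{1/2}\prod_j\|f_j\|_{L^2},
\]
uniformly in the weight parameters. One then splits $\psi=e^{F_{\mu,\epsilon}}\phi$ into $\psi_<+\psi_\sim+\psi_>$ according to $|x|\leq M$, $M\leq|x|\leq M^2$, $|x|\geq M^2$. Cross terms pairing $\psi_<$ with $\psi_>$ pick up $M^{-1/2}$ from the bilinear gain; terms with $\psi_\sim$ are small because $\|\phi\|_{L^2(|x|\geq M)}\to 0$ as $M\to\infty$. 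This converts your useless quadratic inequality into one of the form $G(\|\psi_>\|_{L^2})\leq\text{(small)}$ with $G(\nu)=\tfrac{\omega}{2}\nu-C\nu^2-C\nu^3$, and a continuity-in-$\epsilon$ argument (using the regularized weight $F_{\mu,\epsilon}(x)=\mu|x|/(1+\epsilon|x|)$ and that $\|e^{F_{\mu,1}}\phi_>\|_{L^2}$ is already small) traps $\|\psi_>\|_{L^2}$ in the bounded component of $G^{-1}((-\infty,G(\nu_0)])$, uniformly in $\epsilon>0$. Monotone convergence then gives $e^{\mu|x|}\phi\in L^2$.

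Two further remarks. First, the paper's argument runs directly from $\phi\in L^2$ to exponential decay; your detour through a Schwartz-class foothold is unnecessary, and in any case your route to it is circular (the smoothing proposition requires $\sigma>0$ to start, so it does not manufacture $\phi\in\H^\epsilon$ from $\phi\in L^2$). Second, the Fourier-side statement $e^{\mu|\xi|}\widehat\phi\in L^\infty$ follows by running the identical argument for $\widehat\phi$ via Lemma~\ref{FourierSym}, as you say; no separate $e^{\mu\sqrt H}$ machinery is needed.
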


As a corollary, one gets that $\phi$ is analytic in a complex strip $\{(z_1, z_2)\in \C^2: |Im z_1|, |Im z_2|< \mu/2\}$. We follow the elegant proof of Erdogan, Hundertmark, and Lee \cite{EHL} (see  also\cite{HL}) where the corresponding result is proved for dispersion managed solitons in 1D, with the difference that cubic NLS in 2D (from which~\eqref{CR} is derived) is $L^2-$critical as opposed to subcritical. This is reflected in the key estimate~\eqref{refined est} where the ``dimensionless" gain of $\left(\frac{M}{N}\right)^{1/2}$ is much weaker than the $N^{-1/2}$ decay in the 1D case (both gains are dictated by scaling).
\medskip

We start with some notation: Let
$$
F_{\mu, \epsilon}(x)=\mu \frac{|x|}{1+\epsilon |x|}
$$
and define the following weighted versions of the operators $\mathcal E(f_1,f_2,f_3,f_4) $:
\begin{equation}\label{wE}
{\mathcal E}_{\mu, \epsilon}(f_1,f_2,f_3,f_4) =\mathcal E(e^{-F_{\mu, \epsilon}}f_1,e^{-F_{\mu, \epsilon}}f_2,e^{-F_{\mu, \epsilon}}f_3,e^{F_{\mu, \epsilon}}f_4). 
\end{equation}

We shall need the following key estimates on the multilinear operator ${\mathcal E}_{\mu, \epsilon}$.

\begin{prop}\label{weighted est}
Suppose that $f_1, \ldots, f_4\in L^2(\R^2)$, then the following estimates hold uniformly in $\mu, \epsilon$:

\begin{enumerate}[(i)]
\item 
$\displaystyle |{\mathcal E}_{\mu, \epsilon}(f_1,f_2,f_3,f_4)|\lesssim \prod_{j=1}^4\|f_j\|_{L^2}.$
\item Suppose that for some $\ell, k \in \{1,2,3, 4\}$, one has $\operatorname{supp} f_\ell \subset B(0, M)$ and $\operatorname{supp} f_k \subset B(0, N)^c$ with $M \ll N$, then
\begin{equation}\label{refined est}|{\mathcal E}_{\mu, \epsilon}(f_1,f_2,f_3,f_4)|\lesssim \left(\frac{M}{N}\right)^{1/2}\prod_{j=1}^4\|f_j\|_{L^2}.
\end{equation}
\end{enumerate} 
\end{prop}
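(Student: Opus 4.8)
The plan is to use the Strichartz-type representation~\eqref{eq1} of $\mathcal{E}$ together with the bilinear/transversality structure of $e^{it\Delta}$ in two dimensions. First I would write, conjugating by the weight,
$$
{\mathcal E}_{\mu, \epsilon}(f_1,f_2,f_3,f_4) = 2\pi \int_{\R}\int_{\R^2} (e^{it\Delta} g_1)(e^{it\Delta} g_2)\overline{(e^{it\Delta} g_3)}\,\overline{(e^{it\Delta} g_4)}\,dx\,dt,
$$
where $g_1 = e^{-F_{\mu,\epsilon}}f_1$, $g_2 = e^{-F_{\mu,\epsilon}}f_2$, $g_3 = e^{-F_{\mu,\epsilon}}f_3$, $g_4 = e^{F_{\mu,\epsilon}}f_4$ — but this is not quite right, since the weights do not commute with $e^{it\Delta}$. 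The correct route, as in \cite{EHL}, is to keep the Fourier-side integral identity from the proof of Lemma~\ref{lem11} and observe that the exponential weight $e^{\mp F_{\mu,\epsilon}}$, being $\mu$-Lipschitz with a bounded gradient, acts essentially like a smooth multiplier; the key point is that $F_{\mu,\epsilon}$ is bounded below by $0$ and has gradient bounded by $\mu$ uniformly in $\epsilon$, so $e^{-F_{\mu,\epsilon}(x+z)}e^{-F_{\mu,\epsilon}(\lambda x^\perp + z)}e^{-F_{\mu,\epsilon}(x+\lambda x^\perp+z)}e^{F_{\mu,\epsilon}(z)}$ is bounded by $1$ on the support of the integrand in the physical-space representation of $\mathcal{E}$ — this uses precisely the triangle-type inequality $F_{\mu,\epsilon}(z) \le F_{\mu,\epsilon}(z+x) + F_{\mu,\epsilon}(z+\lambda x^\perp) + F_{\mu,\epsilon}(x+\lambda x^\perp+z)$ up to a controlled error, analogous to the argument used for $K$ in Step~1 of the previous proposition. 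Granting this pointwise bound on the weight factor, part $(i)$ follows immediately from the boundedness of $\mathcal{T}: (L^2)^3 \to L^2$ (equivalently of $\mathcal{E}$ on $(L^2)^4$) proved in \cite{FGH}.

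For part $(ii)$, the gain $(M/N)^{1/2}$ must come from a genuine smoothing/transversality estimate and not just from the weight. I would exploit the bilinear Strichartz estimate for $e^{it\Delta}$ in $\R^2$: if $h_\ell$ has Fourier support in a ball of radius $M$ and $h_k$ has Fourier support outside a ball of radius $N \gg M$, then
$$
\| (e^{it\Delta} h_\ell)(e^{it\Delta} h_k) \|_{L^2_{t,x}(\R\times\R^2)} \lesssim \left(\frac{M}{N}\right)^{1/2} \|h_\ell\|_{L^2}\|h_k\|_{L^2},
$$
by the standard Bourgain/Klainerman–Machedon bilinear estimate (the $M/N$ ratio is exactly the ratio of the smaller frequency support to the separation, as dictated by the Jacobian of the map $(\xi,\eta)\mapsto(\xi+\eta, |\xi|^2+|\eta|^2)$ on transverse frequency annuli). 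Pairing the two factors carrying $f_\ell$ and $f_k$ via Cauchy–Schwarz in $(t,x)$ against the product of the remaining two factors, and bounding the latter product in $L^2_{t,x}$ by another application of the $L^4_{t,x}$ Strichartz estimate (i.e. $\|e^{it\Delta}h\|_{L^4_{t,x}} \lesssim \|h\|_{L^2}$ in $\R^2$), yields~\eqref{refined est}. The only subtlety is that the hypotheses are phrased as physical-space support conditions ($\operatorname{supp} f_\ell \subset B(0,M)$, $\operatorname{supp} f_k \subset B(0,N)^c$), whereas the bilinear estimate wants frequency localization; but by the Fourier-invariance of $\mathcal{E}$ (Lemma~\ref{FourierSym}) one can transfer everything to the frequency side, so that physical-space support of $f_\ell,f_k$ becomes frequency support of $\widehat{f_\ell},\widehat{f_k}$, exactly what the bilinear estimate needs. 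One must also check that reinserting the Lipschitz weights $e^{\mp F_{\mu,\epsilon}}$ does not destroy the frequency localization: since $F_{\mu,\epsilon}$ has gradient $O(\mu)$, multiplication by $e^{\mp F_{\mu,\epsilon}}$ only blurs frequencies by $O(\mu)$, which is harmless as long as the bilinear constant is stable under such an $O(\mu)$ shift (uniformly in $\mu$ bounded), and in any case the relevant regime in the application will have $N$ large compared to the fixed $\mu$.

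The main obstacle I anticipate is handling the noncommutativity of the exponential weight with $e^{it\Delta}$ cleanly enough to retain the $(M/N)^{1/2}$ gain with a constant uniform in $\mu,\epsilon$: one cannot simply pull $e^{-F_{\mu,\epsilon}}$ past the propagator. The fix is to argue, as in \cite{EHL}, directly on the multilinear physical-space integral, splitting according to which spatial regions are large, absorbing the weight via the subadditivity of $F_{\mu,\epsilon}$ along the rectangle configuration $\{z, z+x, z+\lambda x^\perp, z+x+\lambda x^\perp\}$, and only then invoking the bilinear/quadrilinear Strichartz machinery on the unweighted pieces; the uniformity in $\mu, \epsilon$ is then automatic because $F_{\mu,\epsilon} \ge 0$ and $|\nabla F_{\mu,\epsilon}| \le \mu$ with constants independent of $\epsilon$. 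A secondary technical point is to make the reduction to frequency-localized inputs rigorous via Littlewood–Paley decomposition and to sum the resulting dyadic series, which converges precisely because of the $(M/N)^{1/2}$ off-diagonal decay.
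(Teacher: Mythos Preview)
Your approach is essentially the same as the paper's: subadditivity of $F_{\mu,\epsilon}$ along the rectangle configuration for part $(i)$, and Fourier invariance plus Bourgain's bilinear Strichartz refinement paired against two copies of the $L^4_{t,x}$ estimate for part $(ii)$. Part $(i)$ is correct as you state it; the triangle-type inequality you write holds exactly (no ``controlled error''), since $F_{\mu,\epsilon}$ is genuinely subadditive and even.

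Where you overcomplicate matters is in your ``main obstacle'' paragraph for part $(ii)$. The worries about noncommutativity of $e^{\mp F_{\mu,\epsilon}}$ with $e^{it\Delta}$, frequency blurring by $O(\mu)$, and the need for a Littlewood--Paley decomposition are all unnecessary. The paper's one-line fix, which you do not state, is to reduce to nonnegative $f_j$: taking absolute values preserves both the $L^2$ norms and the physical-space support hypotheses, and the triangle inequality on the physical-space integral gives $|\mathcal E_{\mu,\epsilon}(f_1,f_2,f_3,f_4)| \le \mathcal E_{\mu,\epsilon}(|f_1|,|f_2|,|f_3|,|f_4|)$. For nonnegative inputs the integrand is nonnegative, so the subadditivity bound on the weight yields $\mathcal E_{\mu,\epsilon}(|f_1|,\ldots,|f_4|) \le \mathcal E(|f_1|,\ldots,|f_4|)$ pointwise. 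Now the weight is gone entirely, uniformly in $\mu,\epsilon$, and you apply Fourier invariance and the bilinear estimate directly to the unweighted $\mathcal E$. No dyadic summation is needed.
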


\begin{proof}
The proof of part $(i)$ will  follow from the bound
\begin{equation}\label{aineq}
-F_{\mu, \epsilon}(\xi+\lambda z)-F_{\mu, \epsilon}(\xi+\lambda z+z^\perp)-F_{\mu, \epsilon}(\xi+z^\perp)+F_{\mu, \epsilon}(\xi)\leq 0 
\end{equation}
for any $\mu, \epsilon$. Notice the elementary inequality: for all $\epsilon>0$, $\eta_1$, $\eta_2$ in $\mathbb{R}^2$,
\begin{equation}\label{bineq}
 \frac{|\eta_{1}+\eta_{2}|}{1+\epsilon|\eta_{1}+\eta_{2}|} \leq \frac{|\eta_{1}\vert +\vert \eta_{2}|}{1+\epsilon(|\eta_{1}\vert +\vert\eta_{2}|)}\leq \frac{|\eta_{1}|}{1+\epsilon |\eta_{1}|}+\frac{|\eta_{2}|}{1+\epsilon |\eta_{2}|}.
\end{equation} 
We get \eqref{aineq} by applying \eqref{bineq} twice, since $\xi=\xi+\lambda z-(\xi+\lambda z+z^\perp)+(\xi+z^{\perp})$.

\medskip

The proof of part $(ii)$ follows by using a refined bilinear Strichartz estimate: Indeed, without loss of generality, we can assume that $f_i \geq 0$, and then
\begin{align*}
|\mathcal E_{\mu, \epsilon}(f_1, f_2, f_3, f_4)|& \leq \mathcal{E}(f_1, f_2, f_3, f_4) \\
& \leq 2\pi \| (e^{it\Delta} f_1) (e^{it\Delta} f_2) (\overline{e^{it\Delta} f_3}) (\overline{e^{it\Delta} f_4}) \|_{L^1_{t,x}} \\ 
& \lesssim \|e^{it\Delta} \widehat f_\ell e^{it\Delta} \widehat f_k\|_{L^2_{t,x}} \|\prod_{j\neq l,k} e^{it\Delta} \widehat f_j\|_{L^2_{t,x}}\\
&\lesssim \left(\frac{M}{N}\right)^{1/2}\|f_\ell\|_{L^2_{x}}\|f_k\|_{L^2_{x}}\prod_{j\neq \ell, k}\|e^{it\Delta}\widehat f_j\|_{L_{t,x}^4}\\
&\lesssim \left(\frac{M}{N}\right)^{1/2}\prod_{j=1}^4\|f_j\|_{L_{x}^2},
\end{align*}
where we used in the third inequality that $\| e^{it\Delta} f e^{it\Delta} g \|_2=\| e^{it\Delta} f \overline{e^{it\Delta} g} \|_2$, in the next to last step the bilinear refinement to the $L^2_{x} \to L^4_{t,x}$ Strichartz estimate in 2D (cf. Bourgain \cite{Bour}) and in the last inequality the standard $L^2_{x} \to L^4_{t,x}$ Strichartz estimate.
\end{proof}

\begin{rema}\label{olivebranch}
Since $\mathcal E (f_1, f_2, f_3, f_4) =\mathcal E (\widehat{f_1}, \widehat{f_2},\widehat{f_3}, \widehat{f_4})$, the same estimates above for $\mathcal E_{\mu, \epsilon}$ hold for the operator
\begin{eqnarray*}
\widetilde{\mathcal E}_{\mu, \epsilon}(f_1, f_2, f_3,f_4)&:=&\mathcal E(e^{-F_{\mu, \epsilon}(P)}f_1,e^{-F_{\mu, \epsilon}(P)}f_2,e^{-F_{\mu, \epsilon}(P)}f_3,e^{F_{\mu, \epsilon}(P)}f_4)\\
&=&\mathcal E_{\mu,\eps} (\widehat{f_1}, \widehat{f_2},\widehat{f_3}, \widehat{f_4})
\end{eqnarray*}
where $P=-i\nabla$, and with the corresponding support assumptions in part $(ii)$ of Proposition~\ref{weighted est} imposed on $\widehat f_\ell$ and $\widehat f_k$.
\end{rema}

\begin{proof}[Proof of Theorem~\ref{analyticity}]

We start with some notation. Assume for simplicity that $\|\phi\|_{L^2}=1$. Let $M\geq2$ to be fixed later, and denote
$$
f_<(x):=f(x)\chi_{|x|\leq M}, \quad f_\sim(x):= f (x)\chi_{M\leq|x|\leq M^2}, \quad f_>(x):=f (x)\chi_{|x|\geq M^2}.
$$
We would like to find $M, \mu>0$ such that $\|e^{F_{\mu,\epsilon}} \phi \|_{L^2} \lesssim 1$ uniformly in $\epsilon>0$. Since $\|e^{F_{\mu,\epsilon}} f_<\|_{L^2}$, $\|e^{F_{\mu,\epsilon}} f_\sim\|_{L^2}\leq e^{\mu M^2} \|f\|_{L^2}$, the hardest part is to bound $\|e^{F_{\mu,\epsilon}} \phi_{>} \|_{L^2}$, which is the aim of the three steps below.

\bigskip

\noindent \underline{Step 1}: Let $\psi:=e^{F_{\mu, \epsilon}} \phi$. We first obtain an estimate on  $\|\psi_{>}\|_{L^2}$. We start by multiplying~\eqref{EL0} by $e^{2F_{\mu, \epsilon}(x)} \chi_{|x|\geq M^2} \phi$, which gives after integration
$$
\omega \|e^{F_{\mu, \epsilon}} \phi_>\|_{L^2}= \mathcal E (\phi, \phi, \phi, e^{2F_{\mu, \epsilon}(x)}\phi_>)=\mathcal E_{\mu, \epsilon}(e^{F_{\mu, \epsilon}} \phi, e^{F_{\mu, \epsilon}} \phi, e^{F_{\mu, \epsilon}} \phi, e^{F_{\mu, \epsilon}} \phi_>). 
$$
Therefore
\begin{align*}
\omega \|\psi_{>}\|_{L^2}^2&=\mathcal E_{\mu, \epsilon}(\psi, \psi, \psi, \psi_>)\\
&=\mathcal E_{\mu, \epsilon}(\psi_<+\psi_\sim+\psi_>,\psi_<+\psi_\sim+\psi_>, \psi_<+\psi_\sim+\psi_>, \psi_>).
\end{align*}

Expanding the terms in this sum, we bound terms that are cubic and quartic in $\psi_>$ using Proposition~\ref{weighted est} by $e^{\mu M^2} \|\psi_>\|^3_{L^2}$ and $\|\psi_>\|^4_{L^2}$ respectively. Terms that involve at least one copy of $\psi_<$ are bounded using part  $(ii)$ of Proposition~\ref{weighted est}. This yields 
\begin{multline*}
\omega \|\psi_{>}\|_{L^2}^2\lesssim \\
\begin{aligned}
&\lesssim \|\psi_>\|^4_{L^2}+e^{\mu M^2} \|\psi_>\|^3_{L^2}+ \left(M^{-1/2} \| \psi_< \|_{L^2}^2 +\|\psi_\sim\|_{L^2}^2 \right)\| \psi_>\|_{L^2}^2+\left(M^{-1/2} \| \psi_< \|_{L^2}^3 +\|\psi_\sim\|_{L^2}^3\right)\|\psi_>\|_{L^2}\\
&\lesssim \|\psi_>\|^4_{L^2}+e^{\mu M^2} \|\psi_>\|^3_{L^2}+ e^{2\mu M^2}\left(M^{-1/2}  +\|\phi_\sim\|_{L^2}^2 \right)\| \psi_>\|_{L^2}^2+e^{3\mu M^2}\left(M^{-1/2}  +\|\phi_\sim\|_{L^2}^3\right)\|\psi_>\|_{L^2}.
\end{aligned} 
\end{multline*}
Dividing both sides by $\|\psi_{>}\|_{L^2}$ we obtain 
\begin{multline}\label{psi est}
\omega \|\psi_{>}\|_{L^2}\leq \\
C \left( \|\psi_>\|^3_{L^2}+e^{\mu M^2} \|\psi_>\|^2_{L^2}+ e^{2\mu M^2}\left(M^{-1/2}  +\|\phi_\sim\|_{L^2}^2 \right)\| \psi_>\|_{L^2}+e^{3\mu M^2}\left(M^{-1/2}  +\|\phi_\sim\|_{L^2}^3\right)\right).
\end{multline}

\bigskip

\noindent \underline{Step 2:} Let $\nu:=\|\psi_>\|_{L^2}$ and choose $\mu=M^{-2}$. Estimate~\eqref{psi est} above translates into 
$$
\left(\omega - C_0 M^{-1/2}-C_0\|\phi_\sim\|_{L^2}^2\right)\nu- C_0\nu^2-C_0\nu^3\leq C_0 \left(M^{-1/2}  +\|\phi_\sim\|_{L^2}^3\right).
$$
for a new constant $C_0>0$

Let $G(\nu)=\frac{\omega}{2}\nu -C_0 \nu^2-C_0 \nu^3$ and denote by $\nu_{max}$ the point at which $\nu \mapsto G(\nu)$ achieves its maximum on $[0, \infty)$. Let $\nu_0=\nu_{max}/2$ and choose $M$ large enough so that the following two conditions are satisfied:

\begin{enumerate}
\item $C_0 (M^{-1/2}+\|\phi_\sim\|_{L^2})\leq \min (\omega/2, G(\nu_0))$,
\item $\|\phi_{\sim}\|_{L^2}+\|\phi_>\|_{L^2} \leq \frac{\nu_0}{2}$.
\end{enumerate}
This is possible by the monotone convergence theorem.

Therefore,~\eqref{psi est} gives
$$
G(\|e^{F_{\mu,\epsilon}}\phi_>\|_{L^2})\leq G(\nu_0)
$$
for all $\epsilon>0$. 

\bigskip

\noindent \underline{Step 3:} Since $\|e^{F_{\mu,\epsilon}} \phi_>\|_{L^2}$ is continuous in $\epsilon$, we obtain that $\{\|e^{F_{\mu,\epsilon}} \phi_>\|_{L^2}: \epsilon >0\}$ is contained in only one of the two connected components of $G^{-1}(-\infty, G(\nu_0))$. Now
$$
\|e^{F_{\mu,1}} \phi_>\|_{L^2}\leq \|e^{\mu\frac{|x|}{1+|x|}}\|_{L^\infty}\|\phi_>\|_{L^2}\leq 2 \|\phi_>\|_{L^2}\leq \nu_0,
$$
(using our choice  $\mu=M^{-2}\leq 1/4$ above), which gives that $\{\|e^{F_{\mu,\epsilon}} \phi_>\|_{L^2}: \epsilon >0\}$ is contained in the component $[0, \nu_0]$, which means that 

$$
\|e^{F_{\mu,\epsilon}} \phi_>\|_{L^2} \leq \nu_0,\qquad \text{for all } \epsilon>0.
$$
\medskip

We can now complete the proof. Taking the limit $\epsilon\to 0$, we obtain by monotone convergence that $\|e^{\mu|x|}\phi_>\|_{L^2} \leq \nu_0$, which implies in turn $\|e^{\mu|x|}\phi\|_{L^2} < \infty$.
A parallel argument using Remark~\ref{olivebranch} gives that there exists $\mu'$ such that $e^{\mu' |\xi|}\widehat \phi\ \in L^2$. The result now follows easily.
\end{proof}

\section{\texorpdfstring{Studying  \eqref{CR} in the basis of special Hermite functions}{Studying (CR) in the basis of special Hermite functions}} 

\label{sectionSHF}

We are able to describe quite precisely the non-linear operator $\T$, in the Hilbertian basis of $L^2(\R^2)$ provided by the special Hermite functions, which will be defined below.

\subsection{The special Hermite functions}
We first recall  some elements of the spectral theory of $H$ and\;$L$. We follow mainly \cite[Appendix~D]{Cohen}, see also~\cite{Thanga}. Define the creation and annihilation operators
\begin{equation*}
a_{x}\overset{def}{=}\frac{1}{\sqrt{2}}(x+\partial_x),\quad a^{\star}_{x}=\frac{1}{\sqrt{2}}(x-\partial_x) \;\;\mbox{  and  }\;\;
a_{y}\overset{def}{=}\frac{1}{\sqrt{2}}(y+\partial_y),\quad a^{\star}_{y}=\frac{1}{\sqrt{2}}(y-\partial_y).
\end{equation*}
In the sequel, it will be more convenient to work in complex coordinates. Therefore we set $z=x+iy$, $\overline{z}=x-iy$ and $\partial_{z}=\frac12(\partial_{x}-i\partial_{y})$, $\partial_{\overline{z}}=\frac12(\partial_{x}+i\partial_{y})$ before defining
\begin{equation*}
a_{d}\overset{def}{=}\frac{1}{\sqrt{2}}(a_x-ia_y)=\frac12(\ov{z}+2\partial_{z}),\quad a_{g}\overset{def}{=}\frac{1}{\sqrt{2}}(a_x+ia_y)=\frac12({z}+2\partial_{\ov{z}}).
\end{equation*}
We record the following formulas
\begin{equation}
\label{agd}
\begin{split}
& a^{\star}_{d}=\frac12({z}-2\partial_{\ov{z}}),\quad a^{\star}_{g}=\frac12(\ov{z}-2\partial_{{z}}) \\
& [a_d, a_d^\star] = [a_g, a_g^\star] = 1, \quad [a_g,a_d^\star] = [a_g,a_d] = 0 \\
& \F(a^{\star}_d u)=-i a^{\star}_d \wh{u}, \quad \F(a^{\star}_g u)=-i a^{\star}_g \wh{u}\\
& H= -4\partial_{{z}} \partial_{\ov{z}}+\vert z\vert^2   = 2\big(a^{\star}_{d}\,a_d+a^{\star}_{g}\,a_g+1\big) \\
& L= z\partial_{{z}}- \ov{z}\partial_{\ov{z}}=a^{\star}_{d}\,a_d-a^{\star}_{g}\,a_g \\
& x\cdot \nabla= z\partial_{{z}}+\ov{z}\partial_{\ov{z}}.
\end{split}
\end{equation}
We are now able to define the so-called special Hermite functions
\begin{equation*}
\psi_{n,m}=\frac1{\sqrt{\pi n!\, m!}}\big(a^{\star}_d\big)^n \big(a^{\star}_g\big)^m \e^{-z\ov{z}/2},
\end{equation*}
and if $n+m$ is even we set
\begin{equation*}
\phi_{n,m}=\psi_{\frac{n+m}2,\frac{n-m}2}.
\end{equation*}
It is easy to show that 
\begin{equation*}
a_g \psi_{n,m}=\sqrt{m}  \psi_{n,m-1},\qquad a_d \psi_{n,m}=\sqrt{n}  \psi_{n-1,m},
\end{equation*}
\begin{equation*} 
a^{\star}_g \psi_{n,m}=\sqrt{m+1}  \psi_{n,m+1},\qquad a^{\star}_d \psi_{n,m}=\sqrt{n+1}  \psi_{n+1,m},
\end{equation*}
which implies
\begin{equation}\label{a3}
a_g \phi_{n,m}=\sqrt{\frac{n-m}2}\phi_{n-1,m+1},\qquad a_d \phi_{n,m}=\sqrt{\frac{n+m}2} \phi_{n-1,m-1},
\end{equation}
\begin{equation}\label{a4}
a^{\star}_g \phi_{n,m}=\sqrt{\frac{n-m+2}2}  \phi_{n+1,m-1},\qquad a^{\star}_d \phi_{n,m}=\sqrt{\frac{n+m+2}2}  \phi_{n+1,m+1}.
\end{equation}

It turns out that the families  $(\psi_{n,m}, n\geq 0,m\geq 0 )$ and   $(\phi_{n,m}, n\geq 0,m\geq 0 )$ are well-adapted to\;$H$,\;$L$ and $\mathcal{F}$.

\begin{prop}
\begin{enumerate}[(i)]
\item The family $(\psi_{n,m}, n\geq 0,m\geq 0 )$ is an $L^2$-normalised Hilbertian basis of the space $L^{2}(\R^2)$ such that 
\begin{equation*} 
H \psi_{n,m}=2(n+m+1) \psi_{n,m},\qquad L \psi_{n,m}=(n-m) \psi_{n,m}, \qquad \wh{\psi}_{n,m}=\e^{-i(n+m)\frac{\pi}2}\psi_{n,m}.
\end{equation*}
\item The family $(\phi_{n,m}, n\geq 0,-n\leq m\leq n, n+m\;\text{even} )$ is an $L^2$-normalised Hilbertian basis of $L^{2}(\R^2)$ such that 
\begin{equation*}
H \phi_{n,m}=2(n+1) \phi_{n,m},\qquad L \phi_{n,m}=m \phi_{n,m}, \qquad \wh{\phi}_{n,m}=\e^{-in\frac{\pi}2}\phi_{n,m}.
\end{equation*}
\end{enumerate}
\end{prop}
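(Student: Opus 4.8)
The plan is to deduce everything from the normalized Gaussian ground state $\psi_{0,0}=\pi^{-1/2}e^{-z\ov z/2}$ together with the two pairs of ladder operators $(a_d,a^\star_d)$ and $(a_g,a^\star_g)$, which by \eqref{agd} satisfy the commutation relations of two independent harmonic oscillators: $[a_d,a^\star_d]=[a_g,a^\star_g]=1$ and all other commutators vanish. A preliminary point I would record is that $a^\star_d$ and $a^\star_g$ are the genuine $L^2(\R^2)$-adjoints of $a_d$ and $a_g$; this is a one-line integration-by-parts check (or one transports it from the real operators $a_x,a_y$ using the formulas $a_d=\tfrac1{\sqrt2}(a_x-ia_y)$, $a^\star_d=\tfrac1{\sqrt2}(a^\star_x+ia^\star_y)$, etc., which are exactly consistent with \eqref{agd}). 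The orthogonality and normalization arguments below rely on this adjointness.

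\textbf{Part (i): eigenvalues and normalization.} First I would note that $a_d\psi_{0,0}=a_g\psi_{0,0}=0$, immediate from $a_d=\tfrac12(\ov z+2\partial_z)$, $a_g=\tfrac12(z+2\partial_{\ov z})$ and $\partial_z e^{-z\ov z/2}=-\tfrac{\ov z}2 e^{-z\ov z/2}$, and that $\|\psi_{0,0}\|_{L^2}^2=\pi^{-1}\int_{\R^2}e^{-|z|^2}=1$. The standard two-oscillator ladder computation—using only the commutation relations, the adjointness, and $a_d\psi_{0,0}=a_g\psi_{0,0}=0$—then gives that the number operators act by $a^\star_d a_d\psi_{n,m}=n\psi_{n,m}$ and $a^\star_g a_g\psi_{n,m}=m\psi_{n,m}$, that distinct $\psi_{n,m}$ are orthogonal, and that $\|(a^\star_d)^n(a^\star_g)^m\psi_{0,0}\|_{L^2}^2=n!\,m!$, so that the prefactor $(\pi n!\,m!)^{-1/2}$ is exactly the one making $\|\psi_{n,m}\|_{L^2}=1$. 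Substituting the number-operator eigenvalues into $H=2(a^\star_d a_d+a^\star_g a_g+1)$ and $L=a^\star_d a_d-a^\star_g a_g$ from \eqref{agd} yields $H\psi_{n,m}=2(n+m+1)\psi_{n,m}$ and $L\psi_{n,m}=(n-m)\psi_{n,m}$.

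\textbf{Completeness and Fourier action.} For completeness of $(\psi_{n,m})_{n,m\geq0}$ in $L^2(\R^2)$, I would observe that $(a^\star_d,a^\star_g)$ is obtained from the real pair $(a^\star_x,a^\star_y)$ by an invertible $\C$-linear substitution, so for every $N$ the span of $\{(a^\star_d)^n(a^\star_g)^m\psi_{0,0}:n+m\leq N\}$ equals the span of $\{(a^\star_x)^j(a^\star_y)^k\psi_{0,0}:j+k\leq N\}$; the latter functions are the tensor products $h_j(x)h_k(y)$ of one-dimensional Hermite functions, which form a Hilbertian basis of $L^2(\R^2)$ by the classical one-dimensional theory—hence so do the $\psi_{n,m}$. (Alternatively this is exactly \cite[Appendix D]{Cohen}, see also \cite{Thanga}.) The Fourier identity follows by induction: $\wh\psi_{0,0}=\psi_{0,0}$ because the Gaussian is a fixed point of $\F$ in our normalization, and by \eqref{agd} we have $\F(a^\star_d u)=-ia^\star_d\wh u$, $\F(a^\star_g u)=-ia^\star_g\wh u$, so $\wh\psi_{n,m}=(-i)^{n+m}\psi_{n,m}=e^{-i(n+m)\pi/2}\psi_{n,m}$.

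\textbf{Part (ii) and the main obstacle.} Part (ii) is then pure bookkeeping: $(n,m)\mapsto(\tfrac{n+m}2,\tfrac{n-m}2)$ is a bijection from $\{n\geq0,\ -n\leq m\leq n,\ n+m\text{ even}\}$ onto $\{(p,q):p,q\geq0\}$, with inverse $(p,q)\mapsto(p+q,p-q)$, so $(\phi_{n,m})$ is the basis $(\psi_{p,q})$ merely relabeled, hence again an $L^2$-normalized Hilbertian basis; substituting $p+q=n$, $p-q=m$ into the formulas of part (i) gives $H\phi_{n,m}=2(n+1)\phi_{n,m}$, $L\phi_{n,m}=m\phi_{n,m}$, $\wh\phi_{n,m}=e^{-in\pi/2}\phi_{n,m}$. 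The only genuinely non-mechanical step is the completeness claim—everything else is a sequence of short computations with the ladder operators and the explicit Gaussian—and I would take care to state at the outset that $a^\star_d,a^\star_g$ are the $L^2$-adjoints of $a_d,a_g$, since the orthogonality and the norm computation depend on it.
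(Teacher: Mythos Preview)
Your proof is correct and follows exactly the approach the paper intends: the paper's own proof is the single sentence ``It follows in a straightforward way from the formulas~\eqref{agd},'' and your argument is precisely the detailed unpacking of that sentence via the ladder-operator algebra, the Gaussian ground state, the change of basis $(a^\star_x,a^\star_y)\leftrightarrow(a^\star_d,a^\star_g)$ for completeness, and the relabeling for part~(ii). There is nothing to add or correct.
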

\begin{proof}
It follows in a straightforward way from the formulas~(\ref{agd}).
\end{proof}
In other words, $(\psi_{n,m})$ and $(\phi_{n,m})$ are Hilbertian bases of common eigenfunctions of $H$, $L$ and $\mathcal{F}$, which is consistent with the fact that these linear operators commute with one another. 

 Define the eigenspace $E_n=span\{ \phi_{n,m},   \;-n\leq m\leq n, n+m \;\text{even}\}$. Let $u\in L^2(\R^2)$ which can be written 
\begin{equation}\label{expansionf}
u=\sum_{n=0}^{+\infty}u_n,\quad \text{with}\quad  u_n= \sum_{m=-n}^{n}c_{n,m}\phi_{n,m}\in E_n,
\end{equation}
with the convention $c_{n,m}=0$ if $n+m$ is odd.

\subsection{\texorpdfstring{The operator $\mathcal{T}$ in the basis of special Hermite functions}{The operator  T in the basis of special Hermite functions}}

\begin{prop} \ph \label{propFormule} We have
\begin{equation}\label{ha}
\mathcal E(\phi_{n_1,m_1},\phi_{n_2,m_2},\phi_{n_3,m_3},\phi_{n_4,m_4})=\pi^2 \left(\int_{\R^2} \phi_{n_1,m_1}\phi_{n_2,m_2}\ov{\phi_{n_3,m_3}\phi_{n_4,m_4}}dx\right)\mathbbm{1}_{\big\{\substack {n_1+n_2=n_3+n_4\\m_1+m_2=m_3+m_4}\big\}},
\end{equation}
and 
\begin{equation} \label{coli}
\T(\phi_{n_1,m_1},\phi_{n_2,m_2},\phi_{n_3,m_3})= \pi^2 \left(\int_{\R^2} \phi_{n_1,m_1}\phi_{n_2,m_2}\ov{\phi_{n_3,m_3}\phi_{n_4,m_4}}dx\right) \phi_{n_4,m_4}
\end{equation}
with $n_4 = n_1 + n_2 - n_3$ and $m_4 = m_1 + m_2 - m_3$
(actually, $\dis \int_{\R^2} \phi_{n_1,m_1}\phi_{n_2,m_2}\ov{\phi_{n_3,m_3}\phi_{n_4,m_4}}dx=0$ if $m_1+m_2\neq m_3+m_4$).
\end{prop}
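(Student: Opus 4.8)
The plan is to derive \eqref{ha} from Lemma~\ref{lemproj}, and then to obtain \eqref{coli} by pairing against the basis. Concretely, I start from the formula
$$
\mathcal E(f_1,f_2,f_3,f_4)= \pi^2 \sum_{n_1'+n_2'=n_3'+n_4'}\int_{\R^2}(\Pi_{n_1'}f_1)(\Pi_{n_2'}f_2)\ov{(\Pi_{n_3'}f_3)(\Pi_{n_4'}f_4)}\,dx
$$
and specialize to $f_j=\phi_{n_j,m_j}$. Since $\phi_{n_j,m_j}\in E_{n_j}$, we have $\Pi_{n'}\phi_{n_j,m_j}=\phi_{n_j,m_j}$ if $n'=n_j$ and $0$ otherwise, so the only surviving term in the sum is $n_j'=n_j$, and it survives only when $n_1+n_2=n_3+n_4$. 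This already gives the indicator $\mathbbm{1}_{\{n_1+n_2=n_3+n_4\}}$ and reduces $\mathcal E$ to $\pi^2\int_{\R^2}\phi_{n_1,m_1}\phi_{n_2,m_2}\ov{\phi_{n_3,m_3}\phi_{n_4,m_4}}\,dx$ in that case.

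Next I upgrade the indicator to also include the angular constraint $m_1+m_2=m_3+m_4$, equivalently I must show that the integral $\int_{\R^2}\phi_{n_1,m_1}\phi_{n_2,m_2}\ov{\phi_{n_3,m_3}\phi_{n_4,m_4}}\,dx$ vanishes when $m_1+m_2\neq m_3+m_4$. The clean way is to use the rotation symmetry: each $\phi_{n,m}$ is an eigenfunction of $L$ with eigenvalue $m$, so $\phi_{n,m}(R_\theta x)=e^{im\theta}\phi_{n,m}(x)$ (using $e^{i\theta L}\phi_{n,m}=e^{im\theta}\phi_{n,m}$ and the symmetry $u\mapsto u(R_\theta x)$ from the table in Section~\ref{sectionsymmetries}). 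Performing the change of variables $x\mapsto R_\theta x$ in the integral, whose Jacobian is $1$, multiplies it by $e^{i(m_1+m_2-m_3-m_4)\theta}$; since the integral is independent of $\theta$, it must vanish unless $m_1+m_2=m_3+m_4$. This establishes \eqref{ha}.

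Finally, \eqref{coli} follows from \eqref{ha} by expanding $\T(\phi_{n_1,m_1},\phi_{n_2,m_2},\phi_{n_3,m_3})\in L^2(\R^2)$ in the Hilbertian basis $(\phi_{n,m})$: its coefficient against $\phi_{n,m}$ is
$$
\langle \T(\phi_{n_1,m_1},\phi_{n_2,m_2},\phi_{n_3,m_3}),\phi_{n,m}\rangle_{L^2}=\mathcal E(\phi_{n_1,m_1},\phi_{n_2,m_2},\phi_{n_3,m_3},\phi_{n,m}),
$$
which by \eqref{ha} is nonzero only when $n=n_1+n_2-n_3=:n_4$ and $m=m_1+m_2-m_3=:m_4$, in which case it equals $\pi^2\int_{\R^2}\phi_{n_1,m_1}\phi_{n_2,m_2}\ov{\phi_{n_3,m_3}\phi_{n_4,m_4}}\,dx$. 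Summing over the basis leaves the single term $\pi^2(\int_{\R^2}\phi_{n_1,m_1}\phi_{n_2,m_2}\ov{\phi_{n_3,m_3}\phi_{n_4,m_4}}\,dx)\,\phi_{n_4,m_4}$, which is exactly \eqref{coli}; the parenthetical remark about vanishing when $m_1+m_2\neq m_3+m_4$ is precisely the angular computation carried out above. The only mild subtlety — hardly an obstacle — is the bookkeeping of the parity convention $c_{n,m}=0$ when $n+m$ is odd, so that $n_4,m_4$ indeed satisfy $n_4+m_4$ even whenever the three input indices do; this is automatic since $n_4+m_4=(n_1+m_1)+(n_2+m_2)-(n_3+m_3)$. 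I expect no genuine difficulty here: the whole proof is a direct unwinding of Lemma~\ref{lemproj} together with the rotation invariance already recorded in Corollary~\ref{loriot}.
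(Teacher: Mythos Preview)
Your proof is correct and uses the same ingredients as the paper's own argument, namely the resonant projection formula of Lemma~\ref{lemproj} (equivalently~\eqref{eq2}) together with the $L$-eigenfunction property of the $\phi_{n,m}$. The only difference is cosmetic: the paper first establishes~\eqref{coli} via Corollary~\ref{loriot} (observing directly that $\T(\phi_{n_1,m_1},\phi_{n_2,m_2},\phi_{n_3,m_3})$ is a joint $H$- and $L$-eigenfunction, hence collinear to $\phi_{n_4,m_4}$) and then computes~\eqref{ha}, whereas you prove~\eqref{ha} first and deduce~\eqref{coli} by basis expansion.
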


\begin{proof} We first check that
\begin{equation*}
\T(\phi_{n_1,m_1},\phi_{n_2,m_2},\phi_{n_3,m_3})=\mathcal E(\phi_{n_1,m_1},\phi_{n_2,m_2},\phi_{n_3,m_3},\phi_{n_4,m_4}) \phi_{n_4,m_4}
\end{equation*}
with $n_4 = n_1 + n_2 -n_3$ and $m_4 = m_1 + m_2 - m_3$.
Since $\phi_{n,m}$ is a common eigenfunction of $H$ and\;$L$ with eigenvalues respectively $n$ and $m$, Corollary~\ref{loriot} implies that $\T(\phi_{n_1,m_1},\phi_{n_2,m_2},\phi_{n_3,m_3})$ is an eigenfunction of $H$ and $L$ with eigenvalues respectively $n_4$ and $m_4$. Thus it is collinear to $\phi_{n_4,m_4}$. The definition of $\mathcal{E}$ gives the desired result.

Next by \eqref{eq2}, 
\begin{equation*}
\mathcal E(\phi_{n_1,m_1},\phi_{n_2,m_2},\phi_{n_3,m_3},\phi_{n_4,m_4}) = 2\pi \int_{-\frac{\pi}4}^{\frac{\pi}4}\e^{-2i(n_1+n_2-n_3-n_4)t}dt  \int_{\R^2} \phi_{n_1,m_1}\phi_{n_2,m_2}\ov{\phi_{n_3,m_3}\phi_{n_4,m_4}}dx,
\end{equation*}
therefore if $n_1+n_2=n_3+n_4$ and $m_1+m_2=m_3+m_4$
\begin{equation*}
\mathcal E(\phi_{n_1,m_1},\phi_{n_2,m_2},\phi_{n_3,m_3},\phi_{n_4,m_4})= \pi^2 \int_{\R^2} \phi_{n_1,m_1}\phi_{n_2,m_2}\ov{\phi_{n_3,m_3}\phi_{n_4,m_4}}dx,
\end{equation*}
which concludes the proof.
\end{proof}

As a result, expanding $f$ as in~(\ref{expansionf}), the equation \eqref{CR} is equivalent to
 \begin{equation*}
 i\dot{u}_n= \sum_{\substack {n_1,n_2,n_3\geq 0\\n_1+n_2-n_3=n}}\T({u}_{n_1},{u}_{n_2},{u}_{n_3}),
 \end{equation*}
 or: for all $n\geq0$ and $-m\leq m\leq n$
  \begin{equation}\label{eqmod}
 i\dot{c}_{n,m}=  \pi^2 \sum_{\substack {n_1,n_2,n_3\geq 0\\n_1+n_2-n_3=n}} \,    \sum_{\substack {-n_j\leq m_j\leq n_j \\m_1+m_2-m_3=m}} \left(\int_{\R^2} \phi_{n_1,m_1}\phi_{n_2,m_2}\ov{\phi_{n_3,m_3}\phi_{n,m}}dx\right)  c_{n_1,m_1}c_{n_2,m_2}\ov{c_{n_3,m_3}}.
 \end{equation}

\subsection{Conservation laws in the basis of special Hermite functions}

 \begin{lemm}\ph Let $Q$ be one of the operators $a_d$, $a_g$,  $a^{\star}_d$ or $a^{\star}_g$. Then for all $f_1,f_2,f_3\in \mathcal{S}(\R^2)$
  \begin{equation*}
Q\big(\T(f_1,f_2,f_3)\big)= \T(Qf_1,f_2,f_3) +\T(f_1,Qf_2,f_3) -\T(f_1,f_2,Q^{\star}f_3). 
\end{equation*}
As a consequence, 
\begin{equation*}
  \int_{\R^2} (a_d u) \ov{u},\quad \int_{\R^2} (a_g u) \ov{u},\quad \int_{\R^2} (a^{\star}_d u) \ov{u},\quad \int_{\R^2} (a^{\star}_g u) \ov{u}
\end{equation*}
are conservation laws for \eqref{CR}.
   \end{lemm}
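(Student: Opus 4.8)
The plan is to reduce everything to the commutation relation of Lemma~\ref{lemconj}, using that each of $a_d,a_g,a_d^\star,a_g^\star$ is a complex linear combination of operators already on that list, and keeping careful track of the fact that $\mathcal{T}$ is $\mathbb{C}$-linear in its first two slots but \emph{conjugate}-linear in the third. Recall from~\eqref{agd} that $a_d=\frac12(\overline z+2\partial_z)$, $a_g=\frac12(z+2\partial_{\overline z})$, $a_d^\star=\frac12(z-2\partial_{\overline z})$ and $a_g^\star=\frac12(\overline z-2\partial_z)$, with $z=x_1+ix_2$ and $2\partial_z=\partial_{x_1}-i\partial_{x_2}=-i(i\partial_{x_1})-(i\partial_{x_2})$. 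Hence each of these four operators is of the form $Q=\sum_j c_jA_j$, where $c_j\in\mathbb{C}$ and the $A_j$ range over the \emph{self-adjoint} operators $x_1,x_2,i\partial_{x_1},i\partial_{x_2}$, all of which occur (as the components of $x$ and of $i\nabla$) in the list of Lemma~\ref{lemconj}.

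The first step is the observation that if $A$ is self-adjoint and satisfies $A\,\mathcal{T}(f_1,f_2,f_3)=\mathcal{T}(Af_1,f_2,f_3)+\mathcal{T}(f_1,Af_2,f_3)-\mathcal{T}(f_1,f_2,Af_3)$, then for every $c\in\mathbb{C}$ the operator $cA$ satisfies the same identity with $(cA)^\star$ in place of the operator in the third slot. Indeed, multiplying the relation for $A$ by $c$, the first two terms absorb $c$ by linearity of $\mathcal{T}$ in its first two arguments, while $c\,\mathcal{T}(f_1,f_2,Af_3)=\mathcal{T}(f_1,f_2,\overline c\,Af_3)$ by conjugate-linearity in the third argument, and $\overline c A=\overline c A^\star=(cA)^\star$. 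Summing such identities over $j$, and using additivity of $\mathcal{T}$ in each slot together with $\big(\sum_jc_jA_j\big)^\star=\sum_j\overline{c_j}\,A_j^\star=\sum_j\overline{c_j}\,A_j$, produces exactly the asserted commutation relation for $Q\in\{a_d,a_g,a_d^\star,a_g^\star\}$ and all $f_1,f_2,f_3\in\mathcal{S}(\mathbb{R}^2)$ (which lie in the domains of the operators involved, so no density step is needed). As an alternative, one could argue directly from~\eqref{lim}: multiplication by $\overline z$ or $z$ is distributed through $\mathcal{T}$ using $z=(x+z)+(\lambda x^\perp+z)-(x+\lambda x^\perp+z)$, which passes through complex conjugation, while $\partial_z$ and $\partial_{\overline z}$ are handled by combining this with the Fourier invariance~\eqref{fourier} of $\mathcal{T}$ and the Fourier transformation rules recorded in~\eqref{agd}.

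For the conservation laws, I would put $f_1=f_2=f_3=f\in\mathcal{S}(\mathbb{R}^2)$ in the commutation relation just proved. Because $\mathcal{T}$ is symmetric in its first two arguments — which follows from~\eqref{lim} via the change of variables $y=\lambda x^\perp$, $\mu=-1/\lambda$, of unit Jacobian — one gets $\mathcal{T}(Qf,f,f)=\mathcal{T}(f,Qf,f)$, hence $Q\big(\mathcal{T}(f,f,f)\big)=2\,\mathcal{T}(Qf,f,f)-\mathcal{T}(f,f,Q^\star f)$. This is precisely the hypothesis of the conservation-law criterion established just above (the lemma whose conclusion is that $\int_{\mathbb{R}^2}(Qu)\overline u$ is conserved), now invoked for $Q\in\{a_d,a_g,a_d^\star,a_g^\star\}$. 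Consequently $\int_{\mathbb{R}^2}(a_du)\overline u$, $\int_{\mathbb{R}^2}(a_gu)\overline u$, $\int_{\mathbb{R}^2}(a_d^\star u)\overline u$ and $\int_{\mathbb{R}^2}(a_g^\star u)\overline u$ are conserved along the flow of~\eqref{CR}. Unlike the quantities in the table above these are complex-valued, and since $\int_{\mathbb{R}^2}(a_d^\star u)\overline u=\overline{\int_{\mathbb{R}^2}(a_du)\overline u}$ (and similarly for $a_g$) the two pairs carry the same information.

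The argument is basically bookkeeping once Lemma~\ref{lemconj} is in hand, and the only point that genuinely requires care — where a sign is easy to mislay — is the interaction between the conjugate-linearity of $\mathcal{T}$ in its third slot and multiplication by complex scalars. This is exactly why it pays to expand $a_d$, etc., in terms of the self-adjoint generators $x_1,x_2,i\partial_{x_1},i\partial_{x_2}$ rather than $\partial_{x_1},\partial_{x_2}$: for a self-adjoint $A$ the scalar that lands in the third argument is precisely $\overline c$, so that slot reassembles into $Q^\star$, whereas an anti-self-adjoint generator would introduce an extra sign flip.
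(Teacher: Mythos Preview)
Your argument is correct. The paper states this lemma without proof, evidently intending it as an immediate consequence of Lemma~\ref{lemconj} (applied to the self-adjoint components $x_1,x_2,i\partial_{x_1},i\partial_{x_2}$) together with the conservation-law lemma that follows it; this is exactly the route you take, and you make explicit the one genuinely delicate point---the interaction between the conjugate-linearity of $\mathcal{T}$ in its third argument and the complex coefficients in $a_d,a_g,a_d^\star,a_g^\star$---that the paper leaves to the reader.
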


\begin{prop} Set $u=\sum_{n=0}^{+\infty}\sum_{m=-n}^{n}c_{n,m} \phi_{n,m}$ with the convention that $c_{n,m}=0$ if $m+n$ is odd. In these coordinates, the conservation laws are the following
\begin{equation*}
\Vert u\Vert^2_{L^2(\R^2)}=\sum_{n=0}^{+\infty}\sum_{m=-n}^{n}\vert c_{n,m}\vert^2
\end{equation*}
\begin{equation*}
\Vert H u\Vert^2_{L^2(\R^2)}=2\sum_{n=0}^{+\infty}(n+1)\sum_{m=-n}^{n}\vert c_{n,m}\vert^2
\end{equation*}
\begin{equation*}
\int_{\R^2} (L u)\ov{u}=\sum_{n=0}^{+\infty}\sum_{m=-n}^{n}m\vert c_{n,m}\vert^2
\end{equation*}
\begin{equation}\label{k5}
\int_{\R^2} (a_d u)\ov{u}=\int_{\R^2} u\,\ov{a^{\star}_d u}=\sum_{n=0}^{+\infty}\sum_{m=-n}^{n} \sqrt{\frac{n+m+2}2} c_{n+1,m+1}\,\ov{c_{n,m}} 
\end{equation}
\begin{equation}\label{k6}
\int_{\R^2} (a_g u)\ov{u}=\int_{\R^2} u\,\ov{a^{\star}_g u}=\sum_{n=0}^{+\infty}\sum_{m=-n}^{n} \sqrt{\frac{n-m+2}2} c_{n+1,m-1}\,\ov{c_{n,m}} 
\end{equation}
\begin{equation}\label{k7}
\int_{\R^2} \vert z\vert^2 \vert u\vert^2=\sum_{n=0}^{+\infty}(n+1)\sum_{m=-n}^{n}\vert c_{n,m}\vert^2+\sum_{n=0}^{+\infty}\sum_{m=-n}^{n} \frac{\sqrt{n^2-m^2}}2\big( c_{n,m}\,\ov{c_{n-2,m}}+\ov{c_{n,m}}\,{c_{n-2,m}} \big)
\end{equation}
\begin{equation}\label{k8}
\int_{\R^2} \big(x\cdot \nabla u \big) \ov{u}=-\sum_{n=0}^{+\infty}\sum_{m=-n}^{n}\vert c_{n,m}\vert^2+\sum_{n=0}^{+\infty}\sum_{m=-n}^{n} \frac{\sqrt{n^2-m^2}}2\big( c_{n,m}\,\ov{c_{n-2,m}}-\ov{c_{n,m}}\,{c_{n-2,m}} \big).
\end{equation}
\end{prop}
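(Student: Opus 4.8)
The plan is to reduce the whole proposition to two facts already established: $(\phi_{n,m})$ is an $L^2$-orthonormal basis, so that after expanding $u=\sum_{n,m}c_{n,m}\phi_{n,m}$ each quantity $\int_{\R^2}(Qu)\ov u$ is computed by applying $Q$ term by term and pairing against the basis; and the ladder relations \eqref{a3}--\eqref{a4}, which encode the action of $a_d,a_g,a^{\star}_d,a^{\star}_g$ on the $\phi_{n,m}$. I would carry out all computations on the dense algebraic span of the $\phi_{n,m}$ and then extend by density, using that each $\int(Qu)\ov u$ is a continuous form on the appropriate weighted Sobolev space; this step is routine and I do not expect it to be the difficulty.

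First, the easy lines: since $H\phi_{n,m}=2(n+1)\phi_{n,m}$ and $L\phi_{n,m}=m\phi_{n,m}$, orthonormality gives $\|u\|^2_{L^2}=\sum|c_{n,m}|^2$, $\int(Hu)\ov u=2\sum(n+1)|c_{n,m}|^2$ and $\int(Lu)\ov u=\sum m|c_{n,m}|^2$ immediately. For $\int(a_du)\ov u$ I would use \eqref{a3}: $a_du=\sum_{n,m}c_{n,m}\sqrt{\tfrac{n+m}{2}}\,\phi_{n-1,m-1}$, pair with $u$, and re-index $(n,m)\mapsto(n+1,m+1)$, which gives \eqref{k5}; \eqref{k6} comes out the same way from the $a_g$ formula. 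The identities $\int(a_du)\ov u=\int u\,\ov{a^{\star}_d u}$ and its $g$-analogue simply record that $a^{\star}_d,a^{\star}_g$ are the formal adjoints of $a_d,a_g$, and can equally be checked by comparing \eqref{a3} with \eqref{a4}.

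The genuine computation is \eqref{k7} and \eqref{k8}, and here I would first express the multiplication operator $|z|^2$ and the vector field $x\cdot\nabla$ through the ladder operators. From \eqref{agd} one reads off $z=a^{\star}_d+a_g$, $\ov z=a^{\star}_g+a_d$, hence $2\partial_z=a_d-a^{\star}_g$ and $2\partial_{\ov z}=a_g-a^{\star}_d$; substituting into $|z|^2=z\ov z$ and $x\cdot\nabla=z\partial_z+\ov z\partial_{\ov z}$ and normal-ordering with $[a_d,a^{\star}_d]=[a_g,a^{\star}_g]=1$, $[a_g,a^{\star}_d]=[a_g,a_d]=0$ yields
\[
|z|^2=a^{\star}_d a_d+a^{\star}_g a_g+1+a^{\star}_d a^{\star}_g+a_d a_g,\qquad x\cdot\nabla=-1-a^{\star}_d a^{\star}_g+a_d a_g.
\]
Then two applications of \eqref{a3}--\eqref{a4} give $a^{\star}_d a_d\phi_{n,m}=\tfrac{n+m}{2}\phi_{n,m}$, $a^{\star}_g a_g\phi_{n,m}=\tfrac{n-m}{2}\phi_{n,m}$, $a_d a_g\phi_{n,m}=\tfrac12\sqrt{n^2-m^2}\,\phi_{n-2,m}$ and $a^{\star}_d a^{\star}_g\phi_{n,m}=\tfrac12\sqrt{(n+2)^2-m^2}\,\phi_{n+2,m}$; inserting these, pairing with $u$, and re-indexing the $a^{\star}_d a^{\star}_g$ term by $(n,m)\mapsto(n-2,m)$ produces \eqref{k7} and \eqref{k8}. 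Since $a^{\star}_d a^{\star}_g$ is the adjoint of $a_d a_g$ one has $\int(a^{\star}_d a^{\star}_g u)\ov u=\overline{\int(a_d a_g u)\ov u}$, which is why \eqref{k7} carries the symmetric combination $c_{n,m}\ov{c_{n-2,m}}+\ov{c_{n,m}}c_{n-2,m}$ while \eqref{k8}, with the reversed sign in front of $a^{\star}_d a^{\star}_g$, carries the antisymmetric one.

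The main obstacle will be purely bookkeeping: normal-ordering $|z|^2$ and $x\cdot\nabla$ without sign errors, and then keeping the index shifts and the weights $\sqrt{(n\pm\cdots)^2-m^2}$ aligned when the mixed operators $a^{\star}_d a^{\star}_g$, $a_d a_g$ and their adjoints combine in the sums defining \eqref{k7} and \eqref{k8}. There is no analytic difficulty.
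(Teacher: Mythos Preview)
Your proposal is correct and follows essentially the same route as the paper: both use orthonormality and the eigenvalue relations for the first three lines, the ladder formulas \eqref{a3}--\eqref{a4} for \eqref{k5}--\eqref{k6}, and the expression of $|z|^2$ and $x\cdot\nabla$ through $a_d,a_g,a^{\star}_d,a^{\star}_g$ for \eqref{k7}--\eqref{k8}. The only cosmetic difference is that you normal-order $|z|^2$ and $x\cdot\nabla$ at the operator level before applying them to $\phi_{n,m}$, whereas the paper applies the factors one at a time to $\phi_{n,m}$ (and uses the symmetry $\ov{\phi_{n,m}}=\phi_{n,-m}$ to pass from $z\partial_z$ to $\ov z\partial_{\ov z}$); your version is slightly cleaner but not a different argument.
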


\begin{proof} The three first relations are straightforward. For\;\eqref{k5} and \eqref{k6} we use \eqref{a3} and \eqref{a4}. For\;\eqref{k7}, we   write $z=a_g+a^{\star}_d$ and $\ov{z}=a_d+a^{\star}_g$ to get
\begin{equation*}
z\phi_{n,m}=\sqrt{\frac{n-m}2}\phi_{n-1,m+1}+\sqrt{\frac{n+m+2}2}  \phi_{n+1,m+1}
\end{equation*}
and 
\begin{equation*}
\vert z\vert^2 \phi_{n,m}=(n+1) \phi_{n,m}+\frac{\sqrt{n^2-m^2}}2  \phi_{n-2,m}+\frac{\sqrt{(n+2)^2-m^2}}2  \phi_{n+2,m}.
\end{equation*}
We now turn to \eqref{k8}. Similarly,
\begin{equation*}
\partial_z\phi_{n,m}=\frac12\sqrt{\frac{n+m}2}\phi_{n-1,m-1}-\frac12\sqrt{\frac{n-m+2}2}  \phi_{n+1,m-1},
\end{equation*}
and 
\begin{equation*}
z\partial_z\phi_{n,m}=\frac12(m-1)\phi_{n,m}+\frac14\sqrt{n^2-m^2}\phi_{n-2,m}-\frac14\sqrt{(n+2)^2-m^2}\phi_{n+2,m},
\end{equation*}
and using that $\ov{\phi_{n,m}}={\phi_{n,-m}}$ we get 
\begin{equation*}
\ov{z}\partial_{\ov{z}}\phi_{n,m}=\frac12(-m-1)\phi_{n,m}+\frac14\sqrt{n^2-m^2}\phi_{n-2,m}-\frac14\sqrt{(n+2)^2-m^2}\phi_{n+2,m},
\end{equation*}
which gives the desired result.
\end{proof}

%%%%%%%%%%%%%%%%%%%%%%%%%%%%%%%%%%%%%%%%%%%%%%%%%%%%%%%%%%%%%%%%%%%%%%%

\section{Dynamics on the eigenspaces of \texorpdfstring{$H$ }{H}}

\label{sectioneigenspacesH}

Recall that, for $N\in \mathbb{N}$, $E_N$ is the $N$-th eigenspace of $-\Delta+|x|^2$, associated to the eigenvalue $2N + 2$. It is spanned by the eigenfunctions $\phi_{N,m}$, $m \in I_N = \{ -N,-N+2,\dots,N-2,N \}$, thus natural coordinates on $E_N$ are provided by the $(c_{N,m})$ (which we simply denote $(c_m)$ when the context is clear): if $u \in E_N$,
$$
u = \sum_{m\in I_N} c_m \phi_{N,m}.
$$ 
There are only two independent conserved quantities for the restriction of  \eqref{CR} to $E_N$:
$$
M(u) = \|u\|_{L^2}^2 = \sum_{m \in I_N} |c_{m}|^2 \quad  \mbox{and} \quad P(u) = \int L u \cdot \ov{u} = \sum_{m \in I_N} m |c_{m}|^2,
$$ 
the associated symmetries being of course phase and space rotation: $u \mapsto e^{i\theta} u$ and $u \mapsto R_\lambda u$.

\subsection{\texorpdfstring{Dynamics on $E_0$}{Dynamics on E0}}\label{sect41} The eigenspace $E_0$ is generated by the Gaussian $\phi_{0,0}(x)=\frac{1}{\sqrt{\pi}}e^{-\frac{1}{2}|x|^2}$. For data $u(t=0)=c_0 \phi_{0,0}$, the solution $u(t)=c(t)\phi_{0,0}$ is given by $c(t) = e^{-i\frac{\pi}{2}|c_0|^2 t} c_0$.
 
\subsection{\texorpdfstring{Dynamics on $E_1$}{Dynamics on E1}} Write $u=c_1  \phi_{1,1}+ c_{-1}  \phi_{1,-1}$. By Lemma \ref{lem11},
$$
\mathcal{E}(u) = \pi^2 \int_{\mathbb{R}^2} |u|^4.
$$
Using that $\int \vert \phi_{1,1} \vert^4=\int \vert \phi_{1,-1}\vert^4=1/(4\pi)$, it is easy to see that the Hamiltonian reduces on $E_1$ to
\begin{eqnarray*}
\mathcal E(u)&=&\pi^2 \int_{\R^2}\vert  c_1  \varphi_{1,1}+ c_{-1}  \varphi_{1,-1}  \vert^4\\ 
&=&\frac{\pi}{4}\big(\vert c_1\vert^4+\vert c_{-1}\vert^4 +4\vert c_1\vert^2\vert c_{-1}\vert^2   \big).
\end{eqnarray*}
The equation \eqref{CR} can be written $i\partial_t c = \frac{1}{2}\frac{\partial \mathcal{E}}{\partial \bar c}$ or in other words
\begin{equation*} 
\left\{
\begin{aligned}
&i\dot{c}_1=\frac{\pi}{4}\big(\vert c_1\vert^2 +  2\vert c_{-1}\vert^2\big) c_1 , \\
&i\dot{c}_{-1}=\frac{\pi}{4}\big(2\vert c_1\vert^2 +  \vert c_{-1}\vert^2\big) c_{-1}.
\end{aligned}
\right.
\end{equation*}
It is now easy to integrate this equation: if $u_0=c^0_1 \phi_{1,1}+c^0_{-1} \phi_{1,-1}$,
$$
u(t)=c^0_1 \exp\big(-\frac{i\pi t}{4}(\vert c^0_1\vert^2 +  2\vert c^0_{-1}\vert^2) \big) \phi_{1,1}+ c^0_{-1} \exp\big(-\frac{i\pi t}{4}(2\vert c^0_1\vert^2 +  \vert c^0_{-1}\vert^2) \big)  \phi_{1,-1},
$$
thus in particular, every solution is quasi-periodic; even more, every solution is an $M+\alpha P$ wave, for some $\alpha$.

Even if the variational structure on $E_1$ is fairly simple, let us record it before moving on to the more complicated situation on $E_2$. Maximizers of $\mathcal{E}$ for fixed mass are the $\{c_i\}$ such that $|c_1|=|c_{-1}|$; and minimizers of $\mathcal{E}$ for fixed $M$ satisfy $c_1 = 0$ or $c_{-1} =0$. These give rise to $M$-stationary waves. All the other solutions are non-trivial $M+\alpha P$ waves, which are maximizers of $\mathcal{E}$ for $M+\alpha P$ fixed, with $|\alpha|<\frac{1}{3}$. If $1\neq |\alpha|>\frac{1}{3}$, the maximizers degenerate and are given by $c_1=0$ or $c_{-1} =0$, whereas if $\alpha=1, -1$ the constraint $M+\alpha P$ degenerates and the maximizers can be infinite.

Thus, all the solutions can be obtained as extremizers, and are orbitally stable, in the sense that the moduli $|c_1(t)|$, $|c_{-1}(t)|$ are stable with respect to perturbations of the data, uniformly in time, but angles are not. However, this can be seen directly, without resorting to variational considerations!

\subsection{\texorpdfstring{Dynamics on $E_2$}{Dynamics on E2}} We have a good picture of the dynamics in the eigenspace $E_{2}$.
\subsubsection{Writing down the equation}
Decomposing $u = c_2 \phi_{2,2} + c_0 \phi_{2,0} + c_{-2} \phi_{2,-2}$, we find as above that
\begin{align*}
\mathcal{E}(u) & = \pi^2 \int_{\R^2} |u|^4 \\
& =  \frac{\pi}{4} \left[ \frac{3}{4}|c_2|^4 + \frac{3}{4}|c_{-2}|^4 + |c_0|^4 + 3 |c_2|^2 |c_{-2}|^2 + 2 |c_{-2}|^2 |c_0|^2 + 2 |c_2|^2 |c_0|^2 + c_2 c_{-2} \overline{c_0}^2 + \overline{c_{-2} c_2} c_3^2 \right],
\end{align*}
while the conserved quantities of  \eqref{CR} read
\begin{align*}
& M(u) = |c_{-2}|^2 + |c_0|^2 + |c_2|^2 \\
& P(u) = |c_2|^2 - |c_{-2}|^2.
\end{align*}
One can check that the functionals $\mathcal{E}$, $M$, $P$ are in involution on $E_2$, which has dimension 6; this makes the 
system completely integrable. Using the formulation $i \dot{c} = \frac{1}{2} \frac{\partial}{\partial \bar c} \mathcal{E}(c)$ , the equation reads
\begin{equation*}
\left\{
\begin{aligned}
& i \dot{c_2} = \frac{\pi}{16} \left[ 3 |c_2|^2 c_2 + 6 |c_{-2}|^2 c_2 + 4 |c_0|^2 c_2 + 2 \ov{c_{-2}} c_0^2 \right] \\
& i \dot{c_{-2}} = \frac{\pi}{16} \left[ 3 |c_{-2}|^2 c_{-2} + 6 |c_2|^2 c_{-2} + 4 |c_0|^2 c_{-2} + 2 \ov{c_2} c_0^2 \right] \\
& i \dot{c_0} =\frac{\pi}{16} \left[ 4 |c_0|^2 c_0 + 4 |c_2|^2 c_0 + 4 |c_{-2}|^2 c_0 + 4 c_2 c_{-2} \ov{c_0} \right].
\end{aligned}
\right.
\end{equation*}
Rescaling time by $\tau = 16 \pi t$ and switching to the unknown function $d_i(\tau) = e^{-4iM\tau} c_i(\tau)$, this becomes
\begin{equation}\label{eqd}
\left\{
\begin{aligned}
&  i \dot{d_2} = - |d_2|^2 d_2 + 2 |d_{-2}|^2 d_2 + 2 \ov{d_{-2}} d_0^2 \\
& i \dot{d_{-2}} = - |d_{-2}|^2 d_{-2} + 2 |d_2|^2 d_{-2} + 2 \ov{d_2} d_0^2 \\
& i \dot{d_0} = 4 d_2 d_{-2} \ov{d_0}.
\end{aligned}
\right.
\end{equation}

\subsubsection{The $M$- and $M+\alpha P$- stationary waves}
A computation gives all the $M$- and $M+\alpha P$-stationary waves:
\begin{itemize}
\item[(a)] $(d_2,d_{-2},d_0) = (z,0,0) e^{i\mu t}$ with $z \in \mathbb{C}$, and $\mu = |z|^2$. 
\item[(b)] $(d_2,d_{-2},d_0) = (0,z,0) e^{i\mu t}$ with $z \in \mathbb{C}$, and $\mu =|z|^2$.
\item[(c)] $(d_2,d_{-2},d_0) = (0,0,z )$ with $z \in \mathbb{C}$.
\item[(d)] $(d_2,d_{-2},d_0) = (z,z',0) e^{i\mu t}$ with $z,z' \in \mathbb{C}$, $|z| = |z'|$, and $\mu = -|z|^2$.
\item[(e)] $(d_2,d_{-2},d_0) = \lambda \left( \sqrt{\frac{2}{9}} e^{i\beta_1}, \sqrt{\frac{2}{9}} e^{i\beta_2}, \pm i \sqrt{\frac{5}{9}} e^{i \frac{\beta_1 + \beta_2}{2}} \right) e^{i\mu t}$ with $\lambda,\beta_1,\beta_2 \in \mathbb{R}$, and $\mu = \frac{8}{9}\lambda^2$. 
\item[(f)] $(d_2,d_{-2},d_0) = \lambda \left( \sqrt{\frac{2}{7}} e^{i\beta_1}, \sqrt{\frac{2}{7}} e^{i\beta_2}, \pm \sqrt{\frac{3}{7}} e^{i \frac{\beta_1+ \beta_2}{2}} \right) e^{i\mu t}$ with $\lambda,\beta_1,\beta_2 \in \mathbb{R}$, and $\mu = -\frac{8}{7} \lambda^2$.
\item[(g)] $(d_2, d_{-2}, d_0)= R_{-\alpha \omega t} (z, z', 0)e^{i\omega t}$ with $|z|\neq |z'|$ non-zero, $\omega=\frac{|z'|^2}{1-\gamma}$, $\gamma= \frac{|z|^2-|z'|^2}{|z|^2+|z'|^2}$, and $\alpha= \gamma \frac{|z|^2+|z'|^2-\omega}{2 \omega}$. 
\item[(h)] $(d_2,d_{-2},d_0) =  \left( x e^{i \beta_1} e^{i(\mu+\nu)t} , y e^{i \beta_2} e^{i(\mu-\nu)t} , \epsilon z e^{i \frac{\beta_1+\beta_2}{2}} e^{i \mu t} \right)$, with $x,y,z,\beta_1,\beta_2,\mu,\nu \in \mathbb{R}$, $\mu \in (-\infty,-\frac{8}{3}z^2] \cup [\frac{8}{5}z^2,8z^2)$,
$$
\left\{ 
\begin{array}{l} 
x^2 = \mu \big( \frac{\mu}{8z^2 - \mu} - \frac{\nu}{8z^2 - 3 \mu} \big) \\[5pt]
y^2 = \mu \big( \frac{\mu}{8z^2 - \mu} + \frac{\nu}{8z^2 - 3 \mu} \big) \\[5pt]
\nu^2 = \left(8 z^2 - 3 \mu \right)^2 \big( -\frac{1}{16} + \frac{\mu^2}{(8z^2 - \mu)^2} \big),
\end{array}
\right.
$$
and finally $\epsilon = \pm 1$ if $\mu<0$, $\epsilon = \pm i$ if $\mu>0$.
\end{itemize}
Before discussing the stability of these solutions, let us show how the system can be integrated. 

\subsubsection{Integrating the equation}
Setting (recall that $P=|d_2|^2-|d_{-2}|^2$)
$$
D_0=d_0, \quad D_2 = e^{-itP} d_2 \quad \mbox{and} \quad D_{-2} = e^{itP} d_{-2},
$$
the equation~\eqref{eqd} becomes
\begin{equation*} 
\left\{
\begin{aligned}
& i \dot{D_2} =  |D_{-2}|^2 D_2 + 2 \ov{D_{-2}} D_0^2 \\
& i \dot{D}_{-2} = |D_2|^2 D_{-2} + 2 \ov{D_2} D_0^2 \\
& i \dot{D_0} = 4 D_2 D_{-2} \ov{D_0}.
\end{aligned}
\right.
\end{equation*}
Setting now $D_2 = A e^{i\alpha}$, $D_{-2} = B e^{i\beta}$, $D_0 = C e^{i\gamma}$, the conservation of mass and momentum give the relations
$$
A = \sqrt{\frac{M+P-C^2}{2}}, \quad B = \sqrt{\frac{M-P-C^2}{2}}
$$
which allow to eliminate $A$ and $B$ and obtain the new equation
$$
\left\{ \begin{array}{l} \dot C = 4ABC \sin(\alpha + \beta -2\gamma) \\[5pt]
 \dot{\alpha} = -B^2 - 2 \frac{BC^2}{A} \cos(\alpha + \beta - 2 \gamma) \\[5pt]
 \dot{\beta} = -A^2 - 2 \frac{AC^2}{B} \cos(\alpha + \beta - 2 \gamma) \\[5pt]
 \dot{\gamma} = -4AB \cos (\alpha + \beta - 2\gamma). \end{array} \right. 
$$
Switching to the new unknown $\xi = \alpha + \beta - 2\gamma$, we obtain the two-dimensional ODE
\begin{equation*} 
\left\{ \begin{array}{l}
 \dot C = 4 ABC \sin \xi \\[5pt]
 \dot \xi = -A^2 - B^2 - 2\left( \frac{BC^2}{A} + \frac{AC^2}{B} - 4 AB \right) \cos \xi.
 \end{array} 
\right.
\end{equation*}
As a two dimensional ODE, it can be fully understood (by plotting the phase portrait) and then we can deduce the behavior of  the full system. We do not pursue this direction here.

\subsubsection{Orbital stability of the $M$- and $M+\alpha P$-stationary waves.} It can be deduced from variational considerations, as well as the reduction to a two-dimensional ODE that was just presented. We examine one by one the waves presented above.
\begin{itemize}
\item[(a)] is orbitally stable since it maximizes the angular momentum for fixed mass.
\item[(b)] is orbitally stable since it minimizes the angular momentum for fixed mass.
\item[(c)] and (d) are not stable, and there are actually orbits joining arbitrarily small neighborhood of the former and the latter. This can be most easily seen by considering the reduced system in $(C,\xi)$ in the case $P=0$. Assume that $M=1$, then $\left\{ \begin{array}{l} \dot C = 2C(1-C^2) \sin \xi \\ \dot \xi =(C^2-1) + 4(1-2C^2) \cos \xi \end{array} \right.$. The waves under consideration correspond to the orbits $\left\{ \begin{array}{l} C=0 \\ \dot \xi = -1 + 4 \cos \xi \end{array} \right.$ and $\left\{ \begin{array}{l} C=1 \\ \dot \xi = - 4 \cos \xi \end{array} \right.$, and an analysis of the phase diagram gives the desired conclusion.
\item[(e)] is orbitally stable since it minimizes the Hamiltonian for fixed mass, as a lengthy computation shows.
\item[(f)] is orbitally stable since it maximizes the Hamiltonian for fixed mass, as a lengthy computation shows.
\item[(h)] is orbitally stable since it maximizes (for $\mu<0$) or minimizes (for $\mu>0$) the Hamiltonian for $M+\alpha P$ fixed - for a properly chosen $\alpha$.
\end{itemize}

\subsection{\texorpdfstring{Dynamics on $E_N$}{Dynamics on EN}} \label{sect44}

Some  $M$-stationary waves on $E_N$ are given by all the $\phi_{N,k}$, with $k \in I_N$. The waves $\phi_{N,N}$ and $\phi_{N,-N}$ are orbitally stable as extremizers of $P$ for $M$ fixed. By analogy with the case $N=2$, it seems natural to expect that the other $\phi_{N,k}$ are unstable.

These are by no means the only stationary waves: for instance, it is easy to check that $z \phi_{N,k} + z' \phi_{N,-k}$ gives rise to a stationary wave if $|z|=|z'|$, and $k,k' > \frac{N}{2}$. Still by analogy with the case $N=2$, this wave should also be unstable.

Other orbitally stable waves should be obtained by minimization, or maximization of $\mathcal{E}$ for $M$, or $M+\alpha P$ fixed. It seems plausible that this extremization procedure should produce new waves than the ones which have already been described. Focusing on the case where the mass is fixed: $M=1$, this would be the case if 
\begin{equation}
\label{minmax}
\max_{M(\phi)=1} \mathcal{E}(\phi) > \max_{k\in I_N}\mathcal{E} (\phi_{N,k}), \quad \mbox{respectively} \quad \min_{M(\phi)=1} \mathcal{E}(\phi) < \min_{k\in I_N} \mathcal{E}(\phi_{N,k}).
\end{equation}
This does not follow from known estimates: we only know that
$$
\max_{M(\phi)=1} \mathcal{E}(\phi) \approx N^{-\frac{1}{3}}, \quad \mathcal{E}(\phi_{N,0}) \lesssim N^{-1} (\log N),\quad \mbox{and} \quad \mathcal{E}(\phi_{N,N}) =   \mathcal{E}(\phi_{N,-N}) \approx N^{-\frac{1}{2}}
$$
(the first estimate is taken from~\cite{KT}, the second one, valid for $N$ even only, from~\cite{IRT}, and the last one is a simple computation). However, there are good reasons to believe that, for instance, the first inequality in~\eqref{minmax} should hold. Indeed, the near maximizers of $\mathcal{E}(\phi)$, as explained in~\cite{KT}, are expected to focus along rays, or at points. This is not possible for the $\phi_{N,k}$, which satisfy $|\phi_{N,k}(z)|  = \phi_{N,k}(|z|)$.

\section{\texorpdfstring{Dynamics on the eigenspaces of $L$}{Dynamics on the eigenspaces of L}} 

\label{sectioneigenL}

Adopting radial coordinates $(r,\theta)$, let us set
$$
F_n = \big\{ e^{in\theta} f(r), \; \mbox{with} \; f \in L^2(\mathbb{R}^2) \big\}.
$$
It is the $n$-th eigenspace of the rotation operator $L$, which is left invariant by the dynamics of  \eqref{CR}. An eminent instance is the set of radial functions $F_0$.

\subsection{\texorpdfstring{Dynamics on $F_0$, the set of radial functions}{Dynamics on F0, the set of radial functions}} 

The following can be found in   \cite[Chapter 1]{Thanga} and  \cite[Corollary 3.4.1]{Thanga}: the Laguerre polynomial $L^{(0)}_k$ of type $0$ and degree $k\geq 0$ is defined by 
 \begin{equation}\label{laguerre}
 \e^{-x}L^{(0)}_k(x)=\frac1{k !}\frac{d^k}{dx^k}\big( \e^{-x}x^{k}  \big),\quad x\in \R.
 \end{equation}
 These polynomials are orthonormal on $L^{2}([0,+\infty), \e^{-x}dx)$
 \begin{equation}\label{ortho}
 \int_0^{+\infty}L^{(0)}_k(x)L^{(0)}_j(x) \e^{-x} dx= \delta_{jk},
 \end{equation}
and are related to special Hermite functions of second index 0 by 
\begin{equation*}
\phi_{2k,0}(x)=\frac1{\sqrt{\pi}}L^{(0)}_{k}(\vert x \vert^2)\e^{-\vert x\vert^2/2},
\end{equation*}
Simply denote $h_k = \phi_{2k,0}$, then $H h_k= (4k+2) h_k$, 
\begin{equation*}
\mathcal E(h_{n_1},h_{n_2},h_{n_3},h_{n_4})= \pi^2 \big(\int h_{n_1}h_{n_2}h_{n_3}h_{n_4}\big)\mathbbm{1}_{n_1+n_2=n_3+n_4}
\end{equation*}
and
\begin{equation*}
\T(h_{n_1},h_{n_2},h_{n_3})= \pi^2 \big(\int h_{n_1}h_{n_2}h_{n_3}h_{n_4}\big)h_{n_4}, \quad  n_4=n_1+n_2-n_3.
\end{equation*}

Write $\dis f=\sum_{n=0}^{+\infty}c_nh_n$. Then by \eqref{eqmod} the equation  \eqref{CR} is equivalent to 
\begin{equation*}
i\dot{c}_n= \pi^2 \sum_{\substack {n_1,n_2,n_3\geq 0\\n_1+n_2-n_3=n}}c_{n_1}c_{n_2}\ov{c_{n_3}}\big(\int h_{n_1}h_{n_2}h_{n_3}h_{n_4}\big).
\end{equation*}

It was already established in~\cite{FGH} that centered Gaussians generate stationary waves in $F_0$. Another stationary solution exhibited there is the self-similar function $\frac{1}{r}$, which does not belong to $L^2$ but is in the generalized $0$-eigenspace of $L$. Finally, all the $h_n$ give rise to stationary waves: $u(t) = e^{-i \omega_n t} h_n$, with $\omega_n = \mathcal{E}(h_n)$. 

Other stationary waves can be obtained by letting the symmetries of the system act on them. The symmetries of  \eqref{CR} which leave the set of radial functions invariant are $u \mapsto e^{i\theta} u$, $u \mapsto e^{i \mu |x|^2} u$, $u \mapsto e^{i \nu H} u$, $u \mapsto S_\lambda u = \lambda u (\lambda \cdot)$ and $u \mapsto e^{i \alpha \Delta} u$. Since the lens transform formula~(\ref{lenstransf}) expresses the fifth symmetry in terms of the four first symmetries, it suffices to consider the four first ones. Applying them to $h_{n}$ gives the orbit
$$
\mathcal{O}_n = \{ e^{i\theta} e^{i \nu H} e^{i \mu |x|^2} S_\lambda h_n, (\theta, \nu, \mu, \lambda) \in \mathbb{R}^4 \}.
$$
In the case of the Gaussian, we obtain the orbit
$$
\mathcal{O}_0 =  \{ e^{i\theta} e^{i \nu H} e^{i \mu |x|^2} S_\lambda h_0, (\theta, \nu, \mu, \lambda) \in \mathbb{R}^4 \}.
$$
It was proved in~\cite{FGH} that the Gaussian is orbitally stable in $L^2$ in the sense that data close to $\frac{1}{\sqrt{\pi}}e^{-\frac{1}{2}|x|^2}$ in $L^2$ yield solutions remaining close to $\mathcal{O}_0$ for all later times.

For this reason, it is interesting to express the orbit $\mathcal{O}_0$ in the $(c_n)$ coordinates:
\begin{align*}
\langle  e^{i\theta} e^{i \nu H} e^{i \mu |x|^2} S_\lambda h_0 \,,\, h_n \rangle = e^{i\theta} e^{i\nu(4n+2)} \langle e^{i \mu |x|^2} S_\lambda h_0 \,,\, h_n \rangle.
\end{align*}
We now successively change variables to $z = |x|^2$, use the formula~(\ref{laguerre}) giving $h_n$, and integrate by parts repetitively to obtain
\begin{align*}
\langle e^{i \mu |x|^2} S_\lambda h_0 \,,\, h_n \rangle & = \frac{\lambda}{\pi} \int_{\R^{2}} e^{i\mu|x|^2} e^{- \frac{\lambda^2}{2} |x|^2} {L}^{(0)}_n(|x|^2)e^{-|x|^{2}/2}\,dx \\
& = \frac{\lambda}{n!} \int_{0}^{+\infty} e^{i \mu z} e^{\frac{(1-\lambda^2)}{2}z} \left( \frac{d}{dz} \right)^n (z^n e^{-z}) \,dz \\
& =  \frac{(-1)^{n}\lambda}{n!} \left( \frac{1 - \lambda^2}{2} + i\mu\right)^n \int_0^{+\infty} e^{(i\mu - \frac{1+\lambda^2}{2})z} z^n \,dz\\
& = -\lambda \frac{\left(\frac{1-\lambda^2}{2} + i\mu\right)^n} {\left(-\frac{1+\lambda^2}{2} + i\mu\right)^{n+1}} .
\end{align*}
Therefore, $\langle  e^{i\theta} e^{i \nu H} e^{i \mu |x|^2} S_\lambda h_0 \,,\, h_n \rangle = -e^{i\theta} e^{i\nu(4n+2)} \lambda\frac{\left(\frac{1-\lambda^2}{2} + i\mu\right)^n} {\left(-\frac{1+\lambda^2}{2} + i\mu\right)^{n+1}}$, and the orbit $\mathcal{O}_0$ reads, in the $(c_n)$ coordinates
$$
\mathcal{O}_0 = \Big\{ c_n = - e^{i\theta} e^{i\nu(4n+2)} \lambda\frac{\left(\frac{1-\lambda^2}{2} + i\mu\right)^n} {\left(-\frac{1+\lambda^2}{2} + i\mu\right)^{n+1}}, \; (\theta, \nu, \mu, \lambda) \in \mathbb{R}^4 \Big\} .
$$

\subsection{\texorpdfstring{Dynamics on $F_n$, $n\neq 0$}{Dynamics on Fn}}

The subspace $F_n$ does not contain Gaussians anymore, and the role of the ground state is played by
$$
\phi_{n,n}(z) = \frac{1}{\sqrt{\pi n!}} z^n e^{-\frac{|z|^2}{2}}.
$$
It is not clear whether it minimizes $\mathcal{E}$ on $F_n$ for fixed mass; but it is clearly the minimizer of $\langle E \phi,\phi \rangle$ on $F_n$ for fixed mass. This gives immediately orbital stability on $F_n$ for the $M$-stationary wave arising from $\phi_{n,n}$. We study the dynamics on this family $\{\phi_{n, n}\}$ in the next section.

\subsection{Other equivariant stationary waves} We first claim that the equality
$$
\mathcal{T}\Big( \frac{e^{in\theta}}{r},  \frac{e^{in\theta}}{r}, \frac{e^{in\theta}}{r} \Big) = \omega  \frac{e^{in\theta}}{r}
$$
holds for some real number $\omega$ (recall that $\mathcal{T}$ is bounded on $\dot L^{\infty,1} = \{ f \; \mbox{such that} \; rf \in L^\infty\}$, thus the left-hand side makes perfect sense).  Recall that $\mathcal{T}$ commutes with the rotation operator $L$ and the dilation operator $S_\lambda$. Therefore, $\mathcal T \big( \frac{e^{in\theta}}{r}, \frac{ e^{in\theta}}{r},\frac{ e^{in\theta}}{r} \big)$ must be invariant by $S_\lambda$, and of the form $e^{in\theta}f(r)$; thus it has to be equal to $\omega  \frac{e^{in\theta}}{r}$ for some $\omega$.

As a consequence, we obtain a new stationary wave in the generalized $n$-th eigenspace of $L$: $e^{-i\omega t}  \frac{e^{in\theta}}{r}$.

\section{Dynamics on the Bargmann-Fock space} 

\label{sectionbargmannfock} 

Denote by $\mathcal{O}(\C)$ the space of the entire functions in the complex plane. Then the Bargmann-Fock space is given by $L^{2}(\R^2)\cap (\mathcal{O}(\C) \e^{-\vert z \vert^2/2})$.
It admits an orthonormal basis given by the special Hermite functions $\phi_{n,n}$, which we will simply denote $\phi_n$ to alleviate notations:
\begin{equation}\label{def1}
\phi_{n}(x_1,x_2)=\frac{1}{\sqrt{\pi n!}}(x_1+i x_2)^n \e^{-\vert x\vert^2/2 }.
\end{equation}
Recall that $\phi_n$ is such that $H \phi_n=2(n+1)\phi_n$ and $\Vert \phi_n\Vert_{L^2(\R^2)}=1.$
 
It is an invariant subspace for  \eqref{CR}, and we consider in this section its dynamics restricted to it. Out of all the symmetries of  \eqref{CR}, only three act on its restriction to the Bargmann-Fock space: phase rotation $u \mapsto u e^{i\theta}$, with $\theta \in \mathbb{R}$; space rotation $u \mapsto R_\theta u$, with $\theta \in \mathbb{R}$; and magnetic translations $u \mapsto u(z + \xi) e^{-\frac{1}{2}(\bar \xi z - \xi \bar z)}$, for $\xi \in \C$. These symmetries are associated by Noether's theorem to the three conserved quantities
$$
M = \int_{\C} |u(z)|^2 \,dz,\quad P= \int_{\mathbb{C}} (\vert z\vert^2-1)\vert u(z)\vert^2 \,dz, \quad  Q= \int_{\mathbb{C}}  z\vert u(z)\vert^2 dz.
$$

\subsection{\texorpdfstring{The $L^2$ framework}{The L2 framework}}

\begin{lemm}\ph \label{lem6}
Let $\phi_n$ be defined by \eqref{def1}. Then 
\begin{equation*} 
\T(\phi_{n_1},\phi_{n_2},\phi_{n_3}) = \alpha_{n_1,n_2,n_3,n_4} \phi_{n_4}, \qquad n_4=n_1+n_2-n_3,
\end{equation*} 
with
\begin{equation} \label{defalpha}
\alpha_{n_1,n_2,n_3,n_4} = \mathcal E(\phi_{n_1},\phi_{n_2},\phi_{n_3},\phi_{n_4}) =\frac{\pi }{2} \frac{(n_1+n_2)!}{2^{n_1+n_2}\sqrt{n_1 !n_2 !n_3 !n_4 !}}\mathbbm{1}_{n_1+n_2=n_3+n_4}.
\end{equation} 
\end{lemm}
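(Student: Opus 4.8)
The plan is to reduce the statement to Proposition~\ref{propFormule} plus one elementary Gaussian integral, so there is essentially no obstacle. First, since $\phi_n = \phi_{n,n}$, I would apply formula~\eqref{coli} with $m_j = n_j$: this gives immediately that $\T(\phi_{n_1},\phi_{n_2},\phi_{n_3})$ is collinear to $\phi_{n_4,m_4}$ with $n_4 = n_1+n_2-n_3$ and $m_4 = m_1+m_2-m_3 = n_1+n_2-n_3 = n_4$, so the output is $\phi_{n_4,n_4} = \phi_{n_4}$, which confirms that the Bargmann--Fock space is preserved and fixes the form of the statement. The coefficient is then, by~\eqref{coli} and~\eqref{ha}, $\alpha_{n_1,n_2,n_3,n_4} = \mathcal{E}(\phi_{n_1},\phi_{n_2},\phi_{n_3},\phi_{n_4}) = \pi^2\int_{\R^2}\phi_{n_1}\phi_{n_2}\ov{\phi_{n_3}\phi_{n_4}}\,dx$; since $\|\phi_{n_4}\|_{L^2}=1$ this is also $\langle\T(\phi_{n_1},\phi_{n_2},\phi_{n_3}),\phi_{n_4}\rangle$, consistent with the first displayed equality in the lemma. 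Everything thus comes down to evaluating this integral.

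For the integral, I would write $z = x_1+ix_2$, so that by~\eqref{def1}
\[
\phi_{n_1}\phi_{n_2}\ov{\phi_{n_3}\phi_{n_4}} = \frac{1}{\pi^2\sqrt{n_1!\,n_2!\,n_3!\,n_4!}}\, z^{n_1+n_2}\,\ov{z}^{\,n_3+n_4}\, e^{-2|z|^2},
\]
and then pass to polar coordinates $z = re^{i\theta}$. The angular integral $\int_0^{2\pi} e^{i(n_1+n_2-n_3-n_4)\theta}\,d\theta$ vanishes unless $n_1+n_2 = n_3+n_4$, which yields the factor $\mathbbm{1}_{\{n_1+n_2=n_3+n_4\}}$; when $n_1+n_2 = n_3+n_4 =: N$ the radial integral $\int_0^{\infty} r^{2N+1} e^{-2r^2}\,dr$ equals $N!/2^{N+2}$ after the substitution $s = 2r^2$ (recognizing $\Gamma(N+1)$). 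Multiplying through by the constants, and then by $\pi^2$, gives precisely the value~\eqref{defalpha}. The only mildly delicate point here is bookkeeping the powers of $2$ and $\pi$.

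Finally, I would treat separately the degenerate case $n_4 = n_1+n_2-n_3 < 0$, where $\phi_{n_4}$ is not defined and the assertion means $\T(\phi_{n_1},\phi_{n_2},\phi_{n_3}) = 0$: this follows because $\T$ preserves the Bargmann--Fock space and, by Corollary~\ref{loriot}, $\T(\phi_{n_1},\phi_{n_2},\phi_{n_3})$ is an $H$-eigenfunction for the eigenvalue $2(n_1+1)+2(n_2+1)-2(n_3+1)=2(n_4+1)$, which admits no nonzero $L^2$ element when $n_4<0$. As a cross-check one can also recover $\alpha_{n_1,n_2,n_3,n_4}$ from~\eqref{eq2} in Lemma~\ref{lem11}, using $e^{-itH}\phi_n = e^{-2i(n+1)t}\phi_n$ and the parity relation $\phi_n(-x) = (-1)^n\phi_n(x)$ to perform the $t$-integral, which leads back to the same radial Gaussian integral.
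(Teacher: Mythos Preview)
Your proposal is correct and follows essentially the same route as the paper: invoke Proposition~\ref{propFormule} (equivalently~\eqref{coli},~\eqref{ha}) to reduce to the integral $\pi^2\int\phi_{n_1}\phi_{n_2}\overline{\phi_{n_3}\phi_{n_4}}$, then evaluate it in polar coordinates via the substitution $s=2r^2$. Your treatment is in fact slightly more complete than the paper's, since you address the degenerate case $n_4<0$ explicitly.
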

  
\begin{proof} The first claim follows from Proposition \ref{propFormule}

With the change of coordinates $z=r \e^{i\theta}$ and $\rho= 2r^2$, we get
\begin{eqnarray*}
\mathcal E(\phi_{n_1},\phi_{n_2},\phi_{n_3},\phi_{n_4})&=& {\pi^2} \int_{\R^2} \phi_{n_1}\phi_{n_2}\ov{\phi_{n_3}}\ov{\phi_{n_4}}  \\
&=& \frac{1}{ \sqrt{n_1! n_2! n_3! n_4!}}\int_{\R^2} (x_1+i x_2)^{n_1+n_2}(x_1-i x_2)^{n_3+n_4} e^{-2|x|^2} dx\\
&=& \frac{1}{ \sqrt{n_1! n_2! n_3! n_4!}}\int_{-\pi}^{\pi }  \e^{i(n_1+n_2-n_3-n_4)\theta} d\theta\int_{0}^{+\infty} r^{n_1+n_2+n_3+n_4+1}\e^{-2r^2} dr\\
&=& \frac{\pi}{2\cdot 2^{n_1+n_2} \sqrt{n_1! n_2! n_3! n_4!}}\Big(\int_{0}^{+\infty} \rho^{n_1+n_2}\e^{-\rho} d\rho\Big) \mathbbm{1}_{n_1+n_2=n_3+n_4}\\
&=& \frac{\pi(n_1+n_2)!}{2 \cdot 2^{n_1+n_2} \sqrt{n_1! n_2! n_3! n_4!}}  \mathbbm{1}_{n_1+n_2=n_3+n_4},
\end{eqnarray*}
which was the second claim.
  \end{proof}
  
As a consequence we a have the following result.     

\begin{lemm} Denote by $\Pi$ the orthogonal projector on the space $\mathcal{O}(\C) \e^{-\vert z \vert^2/2}$. Then
\begin{equation*}
\big(\Pi u\big) (z)= \frac{1}{\pi} e^{-\frac{|z|^2}{2}} \int_\mathbb{C} e^{\ov{w}  z - \frac{|w|^2}{2}} u(w) \,dw,
\end{equation*}
and we have
 \begin{equation*}
\T(\phi_{n_1},\phi_{n_2},\phi_{n_3} )= {\pi^2} \Pi\big(\phi_{n_1}\phi_{n_2}\ov{\phi_{n_3}}\big)=
 {\pi} \Big(\int_{\xi\in \C} \phi_{n_1}(\xi)\phi_{n_2}(\xi)\ov{\phi_{n_3}}(\xi)e^{\ov{\xi}z}e^{-\vert \xi\vert^2/2}d\xi\Big) e^{-\vert z\vert^2/2}.
\end{equation*}
\end{lemm}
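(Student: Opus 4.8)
The statement has two parts: the explicit integral kernel of the Bargmann--Fock projector $\Pi$, and the identity $\T(\phi_{n_1},\phi_{n_2},\phi_{n_3})=\pi^2\Pi(\phi_{n_1}\phi_{n_2}\overline{\phi_{n_3}})$. I would treat them in that order.

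For the kernel, the plan is to sum the reproducing kernel series. Since $(\phi_n)_{n\geq 0}$ is an orthonormal basis of the Bargmann--Fock space, $\Pi u=\sum_{n\geq 0}\langle u,\phi_n\rangle_{L^2}\phi_n$, and inserting $\phi_n(z)=\frac{z^n}{\sqrt{\pi n!}}e^{-|z|^2/2}$ the $n$-th term equals $\frac{z^n}{\pi n!}e^{-|z|^2/2}\int_{\C}\overline{w}^n e^{-|w|^2/2}u(w)\,dw$. I would then interchange the sum and the $w$-integral: this is legitimate by Tonelli, because the sum of the absolute values equals $\frac1\pi e^{-|z|^2/2}\int_{\C}e^{|z||w|}e^{-|w|^2/2}|u(w)|\,dw$, which is finite for each fixed $z$ by Cauchy--Schwarz in $w$ (the function $w\mapsto e^{|z||w|-|w|^2/2}$ lies in $L^2(\C)$). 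After the interchange, the elementary identity $\sum_{n\geq 0}\frac{(z\overline{w})^n}{n!}=e^{z\overline{w}}$ produces the kernel $\frac1\pi e^{-|z|^2/2}e^{z\overline{w}-|w|^2/2}$, which (using $z\overline{w}=\overline{w}z$) is exactly the claimed expression. The same Cauchy--Schwarz bound also shows that the integral defining $\Pi u$ converges absolutely for every $u\in L^2(\R^2)$.

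For the second part, Proposition~\ref{propFormule} gives $\T(\phi_{n_1},\phi_{n_2},\phi_{n_3})=\pi^2\bigl(\int_{\C}\phi_{n_1}\phi_{n_2}\overline{\phi_{n_3}\phi_{n_4}}\bigr)\phi_{n_4}$ with $n_4=n_1+n_2-n_3$. On the other hand I would expand $\Pi(\phi_{n_1}\phi_{n_2}\overline{\phi_{n_3}})=\sum_{k\geq 0}\langle\phi_{n_1}\phi_{n_2}\overline{\phi_{n_3}},\phi_k\rangle_{L^2}\phi_k$ and note that, in polar coordinates $z=re^{i\theta}$, the coefficient $\langle\phi_{n_1}\phi_{n_2}\overline{\phi_{n_3}},\phi_k\rangle_{L^2}$ carries the angular factor $\int_0^{2\pi}e^{i(n_1+n_2-n_3-k)\theta}\,d\theta$, which vanishes unless $k=n_4$. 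Hence only the $k=n_4$ term survives, $\Pi(\phi_{n_1}\phi_{n_2}\overline{\phi_{n_3}})=\bigl(\int_{\C}\phi_{n_1}\phi_{n_2}\overline{\phi_{n_3}\phi_{n_4}}\bigr)\phi_{n_4}$, and multiplying by $\pi^2$ identifies this with $\T(\phi_{n_1},\phi_{n_2},\phi_{n_3})$. The last displayed identity then follows by applying the kernel formula of the first part to $u=\phi_{n_1}\phi_{n_2}\overline{\phi_{n_3}}\in\mathcal S(\R^2)$ and absorbing the factor $\pi^2\cdot\frac1\pi=\pi$.

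There is no genuine difficulty here; the only point requiring care is the interchange of summation and integration used to sum the reproducing kernel, which the Cauchy--Schwarz bound above settles. Alternatively one could avoid the series by checking directly that the integral operator with the stated kernel is self-adjoint, fixes each $\phi_n$ (using $\int_{\C}w^n\overline{w}^k e^{-|w|^2}\,dw=\pi\,n!\,\delta_{nk}$), and annihilates the orthogonal complement of the Bargmann--Fock space, but this requires essentially the same interchange.
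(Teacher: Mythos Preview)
Your proof is correct and follows essentially the same route as the paper's. The paper's argument is terse---it just says that the kernel of $\Pi$ is $\sum_n \phi_n(z)\overline{\phi_n(\xi)}=\frac{1}{\pi}e^{\overline{\xi}z-|\xi|^2/2-|z|^2/2}$ and that the second identity is ``a direct computation using a polar change of variables''---and you have supplied the details of both steps, including a clean justification for the series/integral interchange that the paper leaves implicit.
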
 

\begin{proof}
The first point  follows from the fact that the kernel $K$ of $\Pi$ is given by 
\begin{equation*}
K(z,\xi)=\sum_{n=0}^{+\infty}\phi_n(z)\ov{\phi_n}(\xi)=\frac{1}{\pi}e^{\ov{\xi}z}e^{-\vert \xi\vert^2/2}e^{-\vert z\vert^2/2}.
\end{equation*}
The reformulation of $\T$ in terms of $\Pi$ is then a direct computation using a polar change of variables.
\end{proof}

Hence the \eqref{CR} equation reads on $L^{2}(\R^2)\cap (\mathcal{O}(\C) \e^{-\vert z \vert^2/2})$
\begin{equation*}
i\partial_t u= {\pi} \Pi \big( |u|^2 u\big).
\end{equation*}
We remark the resemblance of this equation the Szeg\"o equation of Gerard and Grellier where there $\Pi$ is the Szeg\"o projector \cite{GG1}. 
\subsection{\texorpdfstring{Stability in $L^2$ of stationary waves}{Stability in L2 of stationary waves}}

 Consider a Gaussian solitary wave in the Bargmann-Fock space: $u(z) = \frac{1}{\sqrt{\pi}} e^{-\frac{|z|^2}{2}} e^{-i \omega t}$. Its  stability   follows directly from the results in~\cite{FGH}.
\begin{itemize}
\item It is orbitally stable with respect to perturbations in $L^{2,1} \cap H^1$ (see \cite[Proposition 6.8]{FGH}).
\item With respect to perturbations in $L^2$, it is orbitally stable modulo the symmetries acting on the system (phase rotation, space rotation, and magnetic translations). In other words, any perturbation of it remains close to $\{ \frac{1}{\sqrt{\pi}} e^{i\theta -\frac{1}{2} |\xi|^2 -\frac{1}{2}|z|^2 - z \bar \xi}, \; \mbox{with} \; \theta \in \mathbb{R} \; \mbox{and} \; \xi \in \mathbb{C} \}$ (see \cite[Proposition 8.5]{FGH}).
\end{itemize}
\medskip

We now would investigate the stability of the stationary waves $\phi_N e^{-i \omega t}$, with $\omega_N = \mathcal{E}(\phi_N)$, for ${N \geq 1}$. Since nonlinear stability seems to be a delicate question, we focus on linear stability (understood as the absence of exponentially growing mode). 

\begin{prop} For $N\geq 0$, consider the wave  $\phi_N e^{-i \omega_N t}$. The following stability/instability results hold in the Bargmann-Fock space:
\begin{enumerate}[(i)]
\item If $N=0$ or $N=1$, this wave is linearly stable.
\item If $N\geq2$, it is linearly unstable.
\item The number of unstable modes of the $N$-th wave is $o(N)$.
\end{enumerate}
\end{prop}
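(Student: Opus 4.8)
The plan is to linearize the flow of~\eqref{CR} on the Bargmann--Fock space around $\phi_N e^{-i\omega_N t}$ and diagonalize the resulting operator in the basis $(\phi_n)$. Writing $u=\sum_{n\ge 0}c_n\phi_n$, Lemma~\ref{lem6} (equivalently Proposition~\ref{propFormule}) recasts~\eqref{CR} on the Bargmann--Fock space as
\[
i\dot c_n=\sum_{n_1+n_2-n_3=n}\alpha_{n_1,n_2,n_3,n}\,c_{n_1}c_{n_2}\overline{c_{n_3}},
\]
with $\alpha$ as in~\eqref{defalpha}; the wave corresponds to $c_n(t)=e^{-i\omega_N t}\delta_{nN}$ with $\omega_N=\alpha_{N,N,N,N}$. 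Substituting $c_n=e^{-i\omega_N t}(\delta_{nN}+b_n)$ and keeping the terms linear in $b$ (using $\alpha_{N,n,N,n}=\alpha_{n,N,N,n}$ and $\alpha_{N,N,2N-n,n}=\alpha_{N,N,n,2N-n}$) gives
\[
i\dot b_n=(2\alpha_{n,N,N,n}-\omega_N)\,b_n+\alpha_{N,N,n,2N-n}\,\overline{b_{2N-n}}\,\mathbbm{1}_{\{0\le n\le 2N\}}.
\]

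So the linearization decouples: the modes $n>2N$ satisfy $i\dot b_n=(2\alpha_{n,N,N,n}-\omega_N)b_n$ with real coefficient, hence are purely oscillatory; the mode $n=N$ satisfies $i\dot b_N=\omega_N(b_N+\overline{b_N})$, whose solutions grow at most linearly in $t$ (the secular phase mode, not an exponential instability); and for each $0\le n<N$ the pair $X=(b_n,\overline{b_{2N-n}})$ obeys $i\dot X=\mathcal M_n X$ with
\[
\mathcal M_n=\begin{pmatrix}A_n & B_n\\ -B_n & -A_{2N-n}\end{pmatrix},\qquad A_k:=2\alpha_{k,N,N,k}-\omega_N,\quad B_n:=\alpha_{N,N,n,2N-n}.
\]
The eigenvalues of $\mathcal M_n$ are $\tfrac12\bigl[(A_n-A_{2N-n})\pm\sqrt{(A_n+A_{2N-n})^2-4B_n^2}\bigr]$, so $\mathcal M_n$ has no exponentially growing mode iff $(A_n+A_{2N-n})^2\ge 4B_n^2$. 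Writing $\gamma_k:=\alpha_{k,N,N,k}=\tfrac{\pi}{2}2^{-(k+N)}\binom{k+N}{k}$ and noting $B_n=\tfrac{\pi}{2}2^{-2N}\sqrt{\binom{2N}{N}\binom{2N}{n}}$, the wave is thus linearly stable if and only if
\begin{equation}
\bigl|\gamma_n+\gamma_{2N-n}-\gamma_N\bigr|\ \ge\ B_n\qquad\text{for all }0\le n<N. \tag{$\star$}
\end{equation}
This reduces the proposition to elementary estimates on binomial coefficients.

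Part (i) is immediate: for $N=0$ there is no block with $0\le n<N$, and for $N=1$ the only one is $n=0$, for which $(\star)$ reads $\tfrac{3\pi}{16}=|\gamma_0+\gamma_2-\gamma_1|\ge B_0=\tfrac{\pi\sqrt2}{8}$ (true since $3>2\sqrt2$), the remaining modes being oscillatory or secular. For parts (ii) and (iii) I examine $(\star)$ through Stirling's formula. For $n=N-j$ with $1\le j=o(N)$, the local central limit estimate for the $\mathrm{Binomial}(\cdot,\tfrac12)$ law gives $\gamma_{N-j},\gamma_{N+j}=\gamma_N(e^{-j^2/(4N)}+o(1))$, $B_{N-j}=\gamma_N(e^{-j^2/(2N)}+o(1))$ and $\gamma_N\sim\tfrac{\sqrt\pi}{2\sqrt N}$, so with $t=j^2/(4N)$ the inequality $(\star)$ becomes $|2e^{-t}-1|\ge e^{-2t}$ up to lower-order terms; an elementary discussion (splitting at $t=\ln 2$) shows the latter fails precisely for $0<t<\ln(1+\sqrt2)$. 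Hence $(\star)$ fails for a nonempty range $1\le j\lesssim\sqrt{N\ln(1+\sqrt2)}$ once $N$ is large, giving linear instability for all large $N$; the finitely many remaining values $N=2,\dots,N_0$ are checked by hand (e.g. for $N=2$ the block $n=0$ already violates $(\star)$: $|\gamma_0+\gamma_4-\gamma_2|=\tfrac{7\pi}{128}<\tfrac{\pi\sqrt6}{32}=B_0$), which proves (ii). For (iii) one needs only the much softer fact that, for any fixed $\delta>0$, all blocks with $|n-N|\ge\delta N$ are stable when $N$ is large: there $\gamma_n,\gamma_{2N-n},B_n\lesssim e^{-c\delta^2 N}$ by Chernoff tail bounds while $\gamma_N\gtrsim N^{-1/2}$, so $(\star)$ holds with room to spare; hence the number of unstable blocks is $\le 2\delta N+O(1)$ for every $\delta>0$, i.e. $o(N)$ (the refined asymptotics above in fact give the sharper bound $O(\sqrt N)$).

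The main obstacle is making the local central limit asymptotics rigorous and, for part (ii), uniform enough in $j$ over the window $j\asymp\sqrt N$ to force a genuine sign change in $(A_n+A_{2N-n})^2-4B_n^2$; part (iii) is soft, requiring only exponential tail bounds, and the linearization together with the block decomposition and the reduction to $(\star)$ is a routine computation from Lemma~\ref{lem6}.
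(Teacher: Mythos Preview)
Your linearization, block decomposition, and reduction to the scalar criterion $(\star)$ coincide with the paper's proof (the paper's $A,B,C,\omega$ are your $B_n,\gamma_n,\gamma_{2N-n},\gamma_N$), and your treatment of parts (i) and (iii) is essentially the same. The only substantive difference is in part (ii): the paper avoids any splitting into ``large $N$'' plus ``finite check'' by computing the discriminant explicitly at the single index $k=N-2$ and verifying, via an elementary polynomial inequality, that $\Delta(N,N-2)>0$ for \emph{all} $N\ge 2$ in one stroke. Your asymptotic route through the local CLT is correct and even yields the sharper count $O(\sqrt{N})$ of unstable modes, but it leaves an unspecified threshold $N_0$ and an unperformed hand check for $2<N\le N_0$; until that is carried out (or replaced by a uniform argument such as the paper's $k=N-2$ computation), part (ii) is not complete. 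Note also that the ``uniformity'' concern you flag is not actually the obstacle: for instability you only need a single $j$ with $t=j^2/(4N)$ fixed in the open interval $(0,\ln(1+\sqrt2))$, where the strict inequality $|2e^{-t}-1|<e^{-2t}$ absorbs the $o(1)$ errors for large $N$.
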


\begin{proof} We already have the result for the Gaussian, thus we can assume that $N\geq 1$. It will be convenient to use the basis provided by the special Hermite functions $\phi_n$: write $u = \sum_{n=0}^\infty c_n \phi_n$. The stationary waves whose linear stability we will investigate read in these coordinates
\begin{equation*}
c_n = \delta_{N,n}
\end{equation*}
(where $\delta$ is the Kronecker delta function). The linearization of  \eqref{CR} restricted to the Bargmann-Fock space around this stationary wave is given by
\begin{align}
\label{martinpecheur1} & i\dot{c_N} = 2 \alpha_{NNNN} c_N +  \alpha_{NNNN} e^{-2i\alpha_{NNNN}t}\, \overline{c_N} \\
\label{martinpecheur2}& i\dot{c_{k}} = 2 \alpha_{kNkN} c_{k}  + \alpha_{2N-k,k,N,N}  e^{-2i \alpha_{NNNN} t}  \,\overline{c_{2N-k}} \quad \mbox{if $k \in \{0 \dots N-1\} \cup \{N+1 \dots 2N\}$} \\
\label{martinpecheur3}& i\dot{c_{k}} = 2 \alpha_{kNkN} c_{k}  \quad \mbox{if $k \geq 2N+1$},
\end{align}
where $\alpha$ is given by \eqref{defalpha}. The equation \eqref{martinpecheur3} is obviously stable. As for the equation~\eqref{martinpecheur1}, the change of unknown variable $c_N = e^{-i\alpha_{NNNN} t} z$ leads to the equation
$$
i \dot z = \alpha_{NNNN}(z+\overline{z}),
$$
whose solutions grow at most linearly. Finally, to study the equation \eqref{martinpecheur2}, observe that it only couples $c_k$ and $c_{2N-k}$. Set $k \in \{0 \dots N-1\} \cup \{N+1 \dots 2N\}$ and write
$$
\omega = \alpha_{NNNN},\;\; c_{k} = e^{-i\omega t} x, \;\; c_{2N-k} = e^{-i\omega t} y, \;\;A = \alpha_{k,2N-k,N,N},\;\; B = \alpha_{kNkN}, \;\; C = \alpha_{2N-k,N,2N-k,N}.
$$
The equation \eqref{martinpecheur2} becomes
\begin{equation*} 
\left\{
\begin{aligned}
& i \dot x = A \overline{y} + (2B-\omega) x \\
& i \dot{y} = A \overline{x} + (2C - \omega) y, 
\end{aligned}
\right.
\end{equation*}
which implies
$$
\ddot x + i(2B-2C) \dot x - \big(A^2 - (2C - \omega)(2B-\omega)\big) x = 0.
$$
This equation has exponentially growing modes if and only if its discriminant is positive:
$$
\Delta(N,2N-k) =\Delta(N,k) = 4 \big(A^2 - (B+C-\omega)^2\big) >0,
$$
for $0\leq k\leq N-1$. The results of the proposition are then implied by the following lemma.
\end{proof}

\begin{lemm} With $\Delta(N,k)$ given by the above definition, 
\begin{enumerate}[(i)]
\item $\Delta(1,0)<0$.
\item $\Delta(N,N-2)>0$ for all $N \geq 2$.
\item For any $\lambda \in (0,1)$,  $\Delta(N,k) < 0$ for $k =\lambda N$ and $N$ sufficiently big.
\end{enumerate}
\end{lemm}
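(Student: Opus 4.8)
The plan is to reduce every statement to the behaviour of the three ratios $A/\omega$, $B/\omega$, $C/\omega$, where, by the closed formula~\eqref{defalpha} together with $\omega=\alpha_{NNNN}=\tfrac{\pi}{2}(2N)!\,2^{-2N}(N!)^{-2}$,
\[
\frac{A}{\omega}=\frac{N!}{\sqrt{k!\,(2N-k)!}},\qquad
\frac{B}{\omega}=\frac{2^{N-k}(N+k)!\,N!}{k!\,(2N)!},\qquad
\frac{C}{\omega}=\frac{2^{k-N}(3N-k)!\,N!}{(2N-k)!\,(2N)!}.
\]
Since $A,B,C,\omega>0$, the definition $\Delta(N,k)=4\big(A^2-(B+C-\omega)^2\big)$ shows that $\Delta(N,k)>0$ precisely when $\tfrac{A}{\omega}>\big|\tfrac{B}{\omega}+\tfrac{C}{\omega}-1\big|$, and this is the inequality we must decide for $k=0$ (with $N=1$), for $k=N-2$, and for $k=\lfloor\lambda N\rfloor$.

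\emph{Parts (i) and (ii).} For (i) one just evaluates at $N=1$, $k=0$: the three ratios are $\tfrac{1}{\sqrt2}$, $1$, $\tfrac34$, so $\tfrac{A}{\omega}=\tfrac{1}{\sqrt2}<\tfrac34=\big|\tfrac{B}{\omega}+\tfrac{C}{\omega}-1\big|$, hence $\Delta(1,0)=-\tfrac{\pi^2}{64}<0$. For (ii) we set $k=N-2$; the ratios collapse to rational functions of $N$, namely $\big(\tfrac{A}{\omega}\big)^2=\tfrac{N(N-1)}{(N+1)(N+2)}$, $\tfrac{B}{\omega}=\tfrac{2(N-1)}{2N-1}$, $\tfrac{C}{\omega}=\tfrac{2N+1}{2(N+2)}$, and a short computation gives $\tfrac{B}{\omega}+\tfrac{C}{\omega}-1=\tfrac{4N^2-2N-5}{4N^2+6N-4}$, which is positive for $N\geq2$. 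Squaring, clearing denominators and using $4N^2+6N-4=2(2N-1)(N+2)$ turns $\Delta(N,N-2)>0$ into
\[
4N(N-1)(2N-1)^2(N+2)-(4N^2-2N-5)^2(N+1)=8N^3+52N^2-53N-25>0,
\]
which holds for every $N\geq2$ (at $N=2$ the left-hand side equals $141$, and $24N^2+104N-53>0$, so it is increasing).

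\emph{Part (iii).} With $k=\lfloor\lambda N\rfloor$, $\lambda\in(0,1)$ fixed, I would feed the three ratios above into Stirling's formula, obtaining $\log\tfrac{A}{\omega}=-\tfrac12 N\,g(\lambda)+O(\log N)$, $\log\tfrac{B}{\omega}=N\,h_B(\lambda)+O(\log N)$, $\log\tfrac{C}{\omega}=N\,h_C(\lambda)+O(\log N)$ (with $k/N$ in place of $\lambda$, which converges to $\lambda$) where
\[
g(\lambda)=\lambda\log\lambda+(2-\lambda)\log(2-\lambda),\quad h_B(\lambda)=(1+\lambda)\log\tfrac{1+\lambda}{2}-\lambda\log\lambda,\quad h_C(\lambda)=(3-\lambda)\log\tfrac{3-\lambda}{2}-(2-\lambda)\log(2-\lambda).
\]
Each of $g$, $-h_B$, $-h_C$ vanishes at $\lambda=1$ and has constant-sign derivative on $(0,1)$ (respectively $\log\tfrac{\lambda}{2-\lambda}<0$, $\log\tfrac{1+\lambda}{2\lambda}>0$, $\log\tfrac{2(2-\lambda)}{3-\lambda}>0$), hence $g(\lambda)>0$ and $h_B(\lambda),h_C(\lambda)<0$ for $\lambda\in(0,1)$; consequently $\tfrac{A}{\omega},\tfrac{B}{\omega},\tfrac{C}{\omega}$ all decay exponentially in $N$, so $\big|\tfrac{B}{\omega}+\tfrac{C}{\omega}-1\big|\to1$ while $\tfrac{A}{\omega}\to0$, and $\Delta(N,\lfloor\lambda N\rfloor)<0$ for all large $N$.

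The only real difficulty is (ii): there $\tfrac{A}{\omega}$ and $\big|\tfrac{B}{\omega}+\tfrac{C}{\omega}-1\big|$ both converge to $1$, with difference of order $N^{-2}$, so no crude estimate can separate them and one must carry out the exact algebraic identity displayed above — the delicate point being merely to keep track of the cancellation of the $N^5$ and $N^4$ terms. Parts (i) and (iii), by contrast, are a one-line evaluation and a routine entropy/Stirling estimate.
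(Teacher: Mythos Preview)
Your proof is correct and follows essentially the same route as the paper's: part~(i) is a direct evaluation, part~(ii) reduces to the very same cubic inequality $8N^3+52N^2-53N-25>0$ (the paper reaches it by factoring $\Delta=4(A+B+C-\omega)(A-B-C+\omega)$ rather than by squaring, but the algebra is identical), and part~(iii) is the same Stirling computation, only you package it via the ratios $A/\omega,B/\omega,C/\omega$ whereas the paper treats $A,B,C,\omega$ separately. The normalization by $\omega$ is a tidy choice but not a genuinely different argument.
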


\begin{proof} The part $(i)$ is straightforward. We then check $(ii)$. Thanks to the formula giving $\alpha$, we find for $k=N-2$
\begin{align*}
A+B+C-\omega & = \alpha_{N-2,N+2,N,N} + \alpha_{N-2,N,N-2,N} + \alpha_{N+2,N,N+2,N} - \alpha_{N,N,N,N} \\
& =  \frac{\pi(2N-2)!}{(N-1)! (N-2)! 2^{2N+1}} \Big(\frac{2(2N-1)}{\sqrt{(N+2)(N+1)N(N-1)}}   +\frac{(4N^2-2N-5)(N+1)}{(N+2)(N+1)N(N-1)} \Big).
\end{align*}
Since $4N^2-2N-5 > 0$ for $N \geq 2$, we find that $A+B+C - \omega >0$ for all $N\geq 2$ Next,
\begin{align*}
A-B-C+\omega & = \alpha_{N-2,N+2,N,N} - \alpha_{N-2,N,N-2,N} - \alpha_{N+2,N,N+2,N} - \alpha_{N,N,N,N} \\
& =  \frac{\pi(2N-2)!}{(N-1)! (N-2)! 2^{2N+1}} \Big(\frac{2(2N-1)}{\sqrt{(N+2)(N+1)N(N-1)}}   -\frac{(4N^2-2N-5)(N+1)}{(N+2)(N+1)N(N-1)} \Big).
\end{align*}
This term is positive iff
\begin{equation*}
4(2N-1)^2(N+2)N(N-1)>(4N^2-2N-5)^2(N+1),
\end{equation*}
or equivalently $8N^3+52N^2-53N-25>0$, which is the case for all $N\geq 2$. Since  $\Delta = 4(A+B+C+\omega)(A-B-C-\omega)$, we find that  $\Delta(N,N-2) >0$ for all $N \geq 2$.

To prove $(iii)$, we use Lemma~\ref{lem6} and the Stirling formula to prove that, if $k = \lambda  N$ for some $\lambda \in (0,1)$, as $N \to \infty$,
\begin{align*}
& A \sim \frac{1}{2} \sqrt{\frac{\pi}{\sqrt{\lambda (2-\lambda)}}} \frac{1}{\sqrt{N}} \Big( \frac{1}{{\lambda^{\frac{\lambda}{2}} (2-\lambda)^{1-\frac{\lambda}{2}}}}  \Big)^N \\
& B \sim \frac{\sqrt{2\pi}}4  \sqrt{\frac{1+\lambda}{\lambda}} \frac{1}{\sqrt{N}}  \left( \frac{(1+\lambda)^{1+\lambda}}{\lambda^\lambda 2^{1+\lambda}}\right)^N \\
& C \sim  \frac{\sqrt{2\pi}}4 \sqrt{\frac{3-\lambda}{2-\lambda}}  \frac{1}{\sqrt{N}} \left( \frac{ (3-\lambda)^{3-\lambda}}{(2-\lambda)^{2-\lambda}2^{3-\lambda}}\right)^N \\
& \omega \sim \frac{\sqrt{\pi}}2 \frac{1}{\sqrt N}.
\end{align*}
Observe now that, if $\lambda \in (0,1)$, $ \frac{1}{{\lambda^{\frac{\lambda}{2}} (2-\lambda)^{1-\frac{\lambda}{2}}}}$, $ \frac{(1+\lambda)^{1+\lambda}}{\lambda^\lambda 2^{1+\lambda}}$, and $\frac{ (3-\lambda)^{3-\lambda}}{(2-\lambda)^{2-\lambda}2^{3-\lambda}}$ also belong to $(0,1)$. This implies that $A$, $B$, and $C$ decay exponentially in $N$ while $\omega$ only decays polynomially in $N$. Since $\Delta = 4 (A^2 - (B+C+\omega)^2)$, this implies in turn that, for $\lambda$ fixed, $\Delta(N,\lambda N)<0$ for $N$ sufficiently large.
\end{proof}

\subsection{\texorpdfstring{The $L^\infty$ framework}{The L infinity   framework}} 

It is easy to check that the orthogonal projector on $\mathcal{O}(\C) \e^{-\vert z \vert^2/2}$ is bounded on $L^\infty$, and thus can be naturally extended from $L^2$ to $L^\infty$. Therefore, the equation
$$
i\partial_t u= {\pi} \Pi\big(|u|^{2}u \big)
$$
is locally well posed in $L^\infty_t L^\infty_x$ for data in $L^\infty$.

Here we make the link with a result of Aftalion-Blanc-Nier \cite[Theorem 1.4]{ABN}. Denote by $\Pi_{hol}$ the orthogonal projection on the space of entire functions on $\C$ (in \cite{ABN} this corresponds to $\Pi_{h}$ with $h=1$). For $f$, set $u=e^{-|z|^{2}/2}f$, then 
\begin{equation*}
\Pi u=\Pi\big(e^{-|z|^{2}/2}f\big)=\frac1\pi\Big(\int_{\xi\in \C}  f(\xi)e^{\ov{\xi}z}e^{-\vert \xi\vert^2}d\xi\Big) e^{-\vert z\vert^2/2}=e^{-|z|^{2}/2}\Pi_{hol} f.
\end{equation*}
Thus 
\begin{equation*}
\Pi\big(|u|^{2}u \big)= e^{-|z|^{2}/2}\Pi_{hol} \big(e^{-|z|^{2}}|f|^{2}f\big).
\end{equation*}
As a consequence, the function $u_{\tau}(z)=e^{-|z|^{2}/2}f_{\tau}(z)$ given by \cite[Theorem 1.4]{ABN} is a stationary solution to \eqref{CR}. The function $f_{\tau}$ belongs to the space $L^\infty$.

\appendix

\section{Two general structures}

The various equations derived in the present paper present striking similarities, they all belong to one, or two, of the general structures described below. The presentation we give is only formal.

\subsection{An equation on sequences}

Assume that $(\alpha_{k l m n})_{(k,l,m,n) \in A^4}$, where $A \subset{\Z}$ satisfies the symmetries
$$
\alpha_{k l m n} = \alpha_{l k m n} \quad \mbox{and} \quad \alpha_{k l m n} = \overline{\alpha_{m n k l}}.
$$  
Then the  equation 
$$i \dot{c_n} = \sum_{n_1+n_2=n_3+n} \alpha_{n_1 n_2 n_3 n} c_{n_1} c_{n_2} \ov{c_{n_3}}$$
derives  from the Hamiltonian
$$
\mathcal{G} ((c_n)) = \frac{1}{4} \sum_{n_1+n_2=n_3+n_4} \alpha_{n_1 n_2 n_3 n_4} c_{n_1} c_{n_2} \overline{c_{n_3} c_{n}}.
$$
Conserved quantities for this equation are the $\ell^2$ norm as well as the Hamiltonian. 

Examples are
\begin{itemize}
\item $A = \mathbb{Z}$, $\alpha_{n_1 n_2 n_3 n_4} = 1$, which is simply the equation $i\dot u = |u|^2 u$ on $\mathbb{T}$ seen in Fourier space.
\item $A = \mathbb{N}$, $\alpha_{n_1 n_2 n_3 n_4} = \mathbbm{1}_{k,l,m,n \geq 0}$, which is the Szeg\"o equation.
\item $A = \{0, \dots, N\}$, $\alpha_{n_1 n_2 n_3 n_4} = \mathcal{E}(\phi_{N,n_1},\phi_{N,n_2},\phi_{N,n_3},\phi_{N,n_4})$, which is the equation on $E_N$.
\item $A = \mathbb{N}$, $\alpha_{n_1 n_2 n_3 n_4} = \frac{\pi}{8} \frac{(n_1+n_2)!}{2^{n_1+n_2} \sqrt{n_1! n_2 ! n_3! n_4!}}$ which is the Lowest-Landau-Level equation.
\item $A = \mathbb{N}$, $\alpha_{n_1 n_2 n_3 n_4} = \frac{\pi^2}{4} \int h_{n_1} h_{n_2} h_{n_3} h_{n_4}$ where the $(h_n)$ are the normalized radial Hermite functions, which is the "radial equation".
\end{itemize}

\subsection{An equation on functions of a continuous variable}

Assume that $E$ is a closed subset of $L^2(\mathbb{R}^k)$, with orthogonal complement $E^{\perp}$. Let $\Pi$ be the orthogonal projection on $E$.

The equation (whose dependent variable is a function $u$ on $\mathbb{R}^k$).
$$
i \dot{u} = \Pi \left( |u|^2 u \right)
$$
derives from the Hamiltonian
$$
\mathcal{E}(u) = \int |u|^4 \quad \mbox{on $E$}.
$$
Conserved quantities are $\int |u|^4$ and $\int |u|^2$.

Instances are
\begin{itemize}
\item $E = L^2(\mathbb{R}^k)$, which is simply the equation $\dot u = |u|^2 u$.
\item $k=1$, $E = L^2_+$ (functions with positive frequencies), which is the Szeg\"o equation.
\item $k=2$, $E = E_N$, which is the equation on $E_N$.
\item $k=2$, $E = e^{-|x|^2/2} \mathcal{O}(\C)$, which is the Lowest-Landau-Level equation.
\end{itemize}

\bigskip

{\bf Acknowledgement:} The authors are grateful to Patrick G\'erard for pointing out the connection with the Lowest-Landau-Level equation. This work was initiated during the visit of the third author to the Courant Institute of Mathematical Sciences, and he thanks the Institute for its hospitality.

\bigskip

\end{document}